\date{}
\newtheorem{lemma}{Lemma}[section]
\theoremstyle{remark}
\begin{document}

%%%%%%%%%%%%%%%%%%%%%%%%%%%%%%%%%%%%%%%%%%%%%%%%%%%%%%%%%%%%%%%%%%%%%%%%%%%%%%%%%%%%%%%%%%%%%%%%%%%%%%%%%%%%%%%%%%%%%%%%%%%%
%%%%%%%%%%%%%%%%%%%%%%%%%%%%%%%%%%%%%%%%%%%%%%%%%%%%%%%%%%%%%%%%%%%%%%%%%%%%%%%%%%%%%%%%%%%%%%%%%%%%%%%%%%%%%%%%%%%%%%%%%%%%

%\renewcommand{\baselinestretch}{2}

\title{\textbf{Supplementary material for ``Quantile processes and their applications in finite populations"}}
%\runtitle{Quantile processes in finite populations}
\author{Anurag Dey and Probal Chaudhuri\\
\textit{Indian Statistical Institute, Kolkata}}
\maketitle

%%%%%%%%%%%%%%%%%%%%%%%%%%%%%%%%%%%%%%%%%%%%%%%%%%%%%%%%%%%%%%%%%%%%%%%%%%%%%%%%%%%%%%%%%%%%%%%%%%%%%%%%%%%%%%%%%%%%%%%%%%%%

\begin{abstract}
 In this supplement, we first discuss the Assumption $2$, and ($18$), ($19$) and ($21$)--($24$) of the main text.  Then, we provide estimators of some superpopulation parameters related to asymptotic covariance kernels that appear in Sections $3$ and $4$ of the main article. We state and prove some technical lemmas which are required to prove Theorems in Sections $3$--$6$ of the main paper.  Further, we give the proofs of Propositions  $3.1$,  $3.2$ and $4.1$, and Theorems $3.2$, $4.1$, $5.1$--$5.4$ and  $6.1$--$6.3$  in the main article.  This supplement also contains some additional data analysis, and results of the data analysis described in Section $7$ of the main paper..\\ 
\end{abstract}

\textbf{Keywords and phrases:} Auxiliary information, Difference estimator, Hadamard differentiability, High entropy sampling design, Ratio estimator, Regression estimator, RHC sampling design, Stratified multistage cluster sampling design, Skorohod metric, Sup norm metric.

\section{ Discussion of some conditions and related results}\label{sec s1} 
 In this section, we first demonstrate some situations, when the Assumption $2$ in the main article holds. Recall from the paragraph preceding Assumption $1$ in the main text that $Q_y(p)$=$\inf\{t\in\mathds{R}:F_y(t)\geq p\}$ and $Q_x(p)$=$\inf\{t\in\mathds{R}:F_x(t)\geq p\}$ are superpopulation $p^{th}$ quantiles of $y$ and $x$, respectively, and $\textbf{V}_i$=$\textbf{G}_i-\sum_{i=1}^N \textbf{G}_i/N$ for $i$=$1,\ldots,N$, where 
$$
\textbf{G}_i=\big(\mathds{1}_{[Y_i\leq Q_y(p_1)]},\ldots,\mathds{1}_{[Y_i\leq Q_y(p_k)]},\mathds{1}_{[X_i\leq Q_x(p_1)]}\ldots,\mathds{1}_{[X_i\leq Q_x(p_k)]}\big)
$$
 for $p_1,\ldots,p_k\in (0,1)$ and $k\geq 1$. Then, we state the following lemma.
\begin{lemma}\label{lem s1}
Suppose that Assumptions $1$, $4$ and $5$ in the main text hold. Then, Assumption $2$ in the main text holds under SRSWOR and LMS sampling design. Moreover, if $X_i\leq b$ \textit{a.s.} $[\textbf{P}]$ for some $b>0$, $E_{\textbf{P}}(X_i)^{-1}<\infty$, Assumption $1$ holds with $0<\lambda<E_{\textbf{P}}(X_i)/b$, and Assumption $5$ holds, then Assumption $2$ holds under any $\pi$PS sampling design.
\end{lemma}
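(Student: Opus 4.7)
The plan is to verify Assumption 2 separately under each of the three sampling designs. Since each coordinate of $\mathbf{G}_i$ is an indicator, both $\mathbf{G}_i$ and the centred vectors $\mathbf{V}_i$ are uniformly bounded by a constant depending only on $k$. Consequently, the substantive conditions of Assumption 2 reduce to (i) convergence of the design-based covariance matrix of the appropriate Horvitz--Thompson-type sum built from $\mathbf{V}_i$, and (ii) a Lindeberg-type tail condition, which is automatic from the boundedness of $\mathbf{V}_i$. So the whole task is to study, under each design, the second-order design moments of $\sum_{i\in s} \pi_i^{-1}\mathbf{V}_i$ and to check that the corresponding superpopulation covariance kernel stabilizes as $N\to\infty$.

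\textbf{SRSWOR and LMS.} Under SRSWOR one has $\pi_i=n/N$ and $\pi_{ij}=n(n-1)/\{N(N-1)\}$, so the design variance of $N^{-1/2}(N/n)\sum_{i\in s}\mathbf{V}_i$ is a fixed multiple of $(1-n/N)\,N^{-1}\sum_{i=1}^N \mathbf{V}_i\mathbf{V}_i^{\top}$. Assumptions 1, 4 and 5 in the main text give $n/N\to\lambda$ and enough regularity on the superpopulation distribution of $(X_i,Y_i)$ to pass to the limit, yielding Assumption 2. For LMS sampling the first unit is drawn with probability proportional to $X_i$ and the remaining $n-1$ by SRSWOR from the rest, so the first- and second-order inclusion probabilities admit closed forms that agree with those of SRSWOR up to terms of order $1/N$. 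Substituting these into the Horvitz--Thompson variance formula and re-using the SRSWOR computation, together with the superpopulation moment assumptions, Assumption 2 follows.

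\textbf{$\pi$PS sampling.} Here the inclusion probabilities are $\pi_i=n X_i/\sum_{j=1}^N X_j$. The hypotheses $X_i\le b$, $E_{\mathbf{P}}(X_i)^{-1}<\infty$ and $\lambda<E_{\mathbf{P}}(X_i)/b$ are used, respectively, to ensure (a) $\pi_i\le 1$ eventually almost surely, so a proper $\pi$PS scheme exists, (b) $\pi_i^{-1}$ has a uniform probabilistic upper bound via $N/\sum_{j}X_j$, and (c) moments of the form $E_{\mathbf{P}}(X_i^{-1}\mathbf{G}_i\mathbf{G}_i^{\top})$ are finite and converge. Under Assumption 5 the design has the high-entropy property, so one can compare its second-order inclusion probabilities with their Poisson counterparts: $\pi_{ij}-\pi_i\pi_j=O(1/N)$ uniformly. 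This reduces the design covariance of $N^{-1/2}\sum_{i\in s}\pi_i^{-1}\mathbf{V}_i$ to a Poisson-type main term plus a negligible correction; the main term converges by a law-of-large-numbers argument using the moment bounds above, and the remainder vanishes by the high-entropy approximation.

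\textbf{Expected main obstacle.} The essential difficulty is the $\pi$PS case. One must simultaneously keep $\pi_i^{-1}$ under integrable control (where $E_{\mathbf{P}}(X_i)^{-1}<\infty$ is crucial), quantify the deviation of $\pi_{ij}$ from $\pi_i\pi_j$ using the high-entropy hypothesis in Assumption 5, and show that the mixed superpopulation moments of $X_i^{-1}$ and the quantile indicators stabilize. The bound $\lambda<E_{\mathbf{P}}(X_i)/b$ is precisely what is needed to guarantee that the resulting $\pi$PS scheme is feasible in the limit; the rest of the argument then parallels the SRSWOR computation, with Poisson sampling playing the role of the reference design.
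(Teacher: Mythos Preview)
Your proposal misidentifies what Assumption 2 actually asks for, and as a result you are proving the wrong statements with the wrong tools.

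Assumption 2 in the paper has two parts: (i) the matrix $n\Sigma_N$, where $\Sigma_N=N^{-2}\sum_{i=1}^N(\mathbf{V}_i-\mathbf{T}\pi_i)^{T}(\mathbf{V}_i-\mathbf{T}\pi_i)(\pi_i^{-1}-1)$ with the H\'ajek-type centering $\mathbf{T}=\sum_i \mathbf{V}_i(1-\pi_i)/\sum_i\pi_i(1-\pi_i)$, converges a.s.\ to a positive definite limit; and (ii) a boundedness condition on the first-order inclusion probabilities $\pi_i$. Nowhere does it involve second-order inclusion probabilities, Lindeberg tails, or a CLT. The paper's proof is therefore entirely first-order: write out $n\Sigma_N$ explicitly under each design, apply the SLLN to the population averages that appear, and check that the limit is p.d. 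Your plan to study $\pi_{ij}$, compare with a Poisson reference design, and then invoke a law of large numbers on the resulting main term is both unnecessary and aimed at a different target.

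Two specific errors follow from this. First, you write that ``under Assumption 5 the design has the high-entropy property''; this is false. Assumption 5 is a condition on the superpopulation joint distribution of $(X_i,Y_i)$ ensuring that $E_{\mathbf{P}}(\mathbf{G}_i-E_{\mathbf{P}}\mathbf{G}_i)^{T}(\mathbf{G}_i-E_{\mathbf{P}}\mathbf{G}_i)$ is positive definite; it says nothing about the sampling design. In the paper it is used only to conclude that the limiting matrices are p.d. Second, you say the constraint $\lambda<E_{\mathbf{P}}(X_i)/b$ is ``precisely what is needed to guarantee that the resulting $\pi$PS scheme is feasible.'' The paper uses it differently: under $\pi$PS, $n\Sigma_N$ converges (by SLLN, using $E_{\mathbf{P}}X_i^{-1}<\infty$) to a matrix of the form $E_{\mathbf{P}}[\,(\cdots)^{T}(\cdots)\{\mu_x/X_i-\lambda\}\,]$, and $\lambda<\mu_x/b$ together with $X_i\le b$ forces the weight $\mu_x/X_i-\lambda$ to be strictly positive a.s., which is what makes the limit p.d. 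For LMS the paper does not touch $\pi_{ij}$ either; it shows $\max_i|N\pi_i/n-1|\to 0$ a.s.\ (via Assumption 4) and deduces $n(\Sigma_N^{\mathrm{LMS}}-\Sigma_N^{\mathrm{SRSWOR}})\to 0$. You should rewrite the argument around the explicit form of $\Sigma_N$ and dispense with the second-order and high-entropy machinery.
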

\begin{proof}
 Given any $k\geq 1$ and $p_1,\ldots,p_k\in (0,1)$ let us denote $(1/N^2)\sum_{i=1}^{N}(\textbf{V}_i-\textbf{T}\pi_i)^T(\textbf{V}_i-\textbf{T}\pi_i)(\pi_i^{-1}-1)$ by $\Sigma_N$. Here, $\textbf{T}$=$\sum_{i=1}^N \textbf{V}_i(1-\pi_i)/\sum_{i=1}^N \pi_i(1-\pi_i)$, and the $\pi_i$'s are inclusion probabilities.  Note that  
 $$
 n \Sigma_N=(1-n/N)\bigg(\sum_{i=1}^N\textbf{V}_i^T\textbf{V}_i/N-\overline{\bf{V}}^T\overline{\bf{V}}\bigg)
 $$ 
 under SRSWOR. Then, 
\begin{equation}\label{eq 19}
 n\Sigma_N\rightarrow (1-\lambda)E_{\textbf{P}}(G_i-E_{\textbf{P}}(G_i))^T(G_i-E_{\textbf{P}}(G_i))\text{ as }\nu\rightarrow\infty\textit{ a.s. } [\textbf{P}]
\end{equation}
by Assumption $1$ and SLLN. Note that $E_{\textbf{P}}(G_i-E_{\textbf{P}}(G_i))^T(G_i-E_{\textbf{P}}(G_i))$ is p.d. by Assumption $5$. Thus Assumption $2$--\textsl{(i)} holds under SRSWOR. Assumption $2$--\textsl{(ii)} holds under SRSWOR trivially. 
\par
\vspace{.1cm}

Next, suppose that $\Sigma_N^{(1)}$ and $\Sigma_N^{(2)}$ denote $(1/N^2)\sum_{i=1}^{N}(\textbf{V}_i-\textbf{T}\pi_i)^T(\textbf{V}_i-\textbf{T}\pi_i)(\pi_i^{-1}-1)$ under LMS sampling design and SRSWOR, respectively, and $\{\pi_{i}^{(1)}\}_{i=1}^N$ denote inclusion probabilities of LMS sampling design. Then, we have 
\begin{align}\label{eq 20}
\begin{split}
&\pi_{i}^{(1)}=(n-1)/(N-1)+(X_i/\sum_{i=1}^N X_i)((N-n)/(N-1))\text{ and }\\
&\pi_{i}^{(1)}-n/N=-(N-n)(N(N-1))^{-1}(X_i/\overline{X}-1).
\end{split}
\end{align}
 Further,
$$
\frac{|\pi_{i}^{(1)}-n/N|}{n/N}=\frac{N-n}{n(N-1)}\left|\frac{X_i}{\overline{X}}-1\right|\leq\frac{N-n}{n(N-1)}\left(\frac{\max_{1\leq i \leq N} X_{i}}{\min_{1\leq i \leq N} X_{ i}}+1\right).
$$
Therefore, $\max_{1\leq i \leq N}|N\pi_{i}^{(1)}/n-1|\rightarrow0$ as $\nu\rightarrow\infty$ \textit{a.s.} $[\textbf{P}]$ by Assumption $4$. It can be shown using this latter result that $n(\Sigma_N^{(1)}-\Sigma_N^{(2)})\rightarrow 0$ as $\nu\rightarrow\infty$ \textit{a.s.} $[\textbf{P}]$. Therefore, Assumption $2$--\textsl{(i)} holds under LMS sampling design in view of \eqref{eq 19}. Assumption $2$--\textsl{(ii)} holds under LMS sampling design by Assumption $1$ and \eqref{eq 20}. 
\par
\vspace{.1cm}

Next, under any $\pi$PS sampling design (i.e., a sampling design with $\pi_i$=$nX_i/\sum_{i=1}^N X_i$), we have 
\begin{align}\label{eq 29}
\begin{split}
&\lim_{\nu\rightarrow\infty}n\Sigma_N=E_{\textbf{P}}\big[ \big\{\textbf{G}_i-E_{\textbf{P}}(\textbf{G}_i)+\lambda\chi^{-1}\mu_x^{-1} C_{xg}X_i \big\}^T \times\\
& \big\{\textbf{G}_i-E_{\textbf{P}}(\textbf{G}_i)+\lambda\chi^{-1}\mu_x^{-1} C_{xg} X_i\big\}\big\{ \mu_x/X_i-\lambda\big\}\big]  \textit{ a.s. } [\textbf{P}]
\end{split}
\end{align}
 by SLLN because $E_{\textbf{P}}(X_i)^{-1}<\infty$ and  Assumption $1$ holds. Here, $\mu_x$=$E_{\textbf{P}}(X_i)$, $\chi$=$\mu_x-\lambda (E_{\textbf{P}}(X_i)^2/\mu_x)$ and $C_{xg}$=$E_{\textbf{P}}[(\textbf{G}_i-E_{\textbf{P}}(\textbf{G}_i))X_i]$. The matrix on the right hand side of \eqref{eq 29} is p.d. because  $X_i\leq b$ \textit{a.s.} $[\textbf{P}]$ for some $b>0$, Assumption $5$ holds and Assumption $1$ holds with $0<\lambda<E_{\textbf{P}}(X_i)/b$. Thus Assumption $2$--\textsl{(i)} holds under any $\pi$PS sampling design. Assumption $2$--\textsl{(ii)} holds under any $\pi$PS sampling design by Assumption $4$. This completes the proof of the lemma. 
\end{proof}
Next, we consider some examples where the conditions ($18$) and ($19$) in the main article hold, and some examples where these conditions fail to hold. Suppose that $Y_i$'s have a truncated normal distribution with distribution function $\big(\Phi((t-\mu)/\sigma)-\Phi((\log(0.05)-\mu)/\sigma)\big)\big/\big(\Phi((\log(500)-\mu)/\sigma)-\Phi((\log(0.05)-\mu)/\sigma)\big)$ for $\mu\in\{u\}_{u=0}^{10}$ and $\sigma$=$1$, $X_i$=$exp(Y_i)$ for $i$=$1,\ldots,N$, and $\lambda$=$(20+\lfloor500/E_{\textbf{P}}(X_i)\rfloor)^{-1}$. Then, the conditions ($18$) and ($19$) are discussed in Table \ref{table 8} below in the cases of various finite population parameters and sampling designs. Here, $\Phi(\cdot)$ is the distribution function of the standard normal distribution.
\clearpage

\begin{table}[h!] 
\centering  
\caption{Discussion of the conditions ($18$) and ($19$) in the main text}

\label{table 8}
\renewcommand{\arraystretch}{0.75}
\begin{tabular}{ccccc}
\hline
&&\multicolumn{3}{c}{ Sampling design}\\
\hline
Parameter& The condition  & SRSWOR& RHC& HE$\pi$PS\\
 \hline
\multirow{2}{*}{ Median }& ($19$) holds for&$\mu\geq 5$ &$\mu\geq 5$& $\mu\geq 5$\\  
&($19$) does not hold for& $\mu\leq 4$&$\mu\leq 4$& $\mu\leq 4$\\
\hline
$\alpha$-trimmed mean &($18$) holds for&$\mu\leq 1$ &$\mu\leq 1$& $\mu\leq 1$\\  
with $\alpha$=$0.1$&($18$) does not hold for& $\mu\geq 2$&$\mu\geq 2$& $\mu\geq 2$\\
\hline
$\alpha$-trimmed mean & ($18$) holds for&$\mu\geq 5$ &$\mu\geq 5$& $\mu\geq 5$\\ 
with $\alpha$=$0.3$& ($18$) does not hold for&$\mu\leq 4$&$\mu\leq 4$& $\mu\leq 4$\\
\hline
Interquartile & ($19$) holds for&$\mu\geq 4$ &$\mu\geq 4$& $\mu\geq 4$\\  
range& ($19$) does not hold for& $\mu\leq 3$&$\mu\leq 3$& $\mu\leq 3$\\
\hline
Bowley's measure & ($19$) holds for&$\mu\geq 4$&$\mu\geq 5$&$\mu\geq 5$\\  
of skewness& ($19$) does not hold for&  $\mu\leq 3$& $\mu\leq 4$&  $\mu\leq 4$\\
\hline
\end{tabular}
\end{table}
Next, we consider some examples where the conditions ($21$) and ($22$) in the main article hold, and some examples where these conditions fail to hold. Suppose that $Y_i$'s have a truncated normal distribution with distribution function $\big(\Phi((t-\mu)/\sigma)-\Phi((\log(200)-\mu)/\sigma)\big)\big/\big(\Phi((\log(300)-\mu)/\sigma)-\Phi((\log(200)-\mu)/\sigma)\big)$ for $\mu$=$10$ and $\sigma\in \{u\}_{u=1}^{10}$, $X_i$=$exp(Y_i)$ for $i$=$1,\ldots,N$, and $\lambda$=$(20+\lfloor 300/E_{\textbf{P}}(X_i)\rfloor)^{-1}$. Then, the conditions ($21$) and ($22$) are discussed in Table \ref{table 9} below in the cases of various finite population parameters and their estimators. 
\begin{table}[h] 
\centering  
\caption{Discussion of the conditions ($21$) and ($22$) in the main text}

\label{table 9}
\renewcommand{\arraystretch}{0.75}
\begin{tabular}{cccccc}
\hline
&&\multicolumn{4}{c}{ Estimator based on}\\
\hline
Parameter& The condition  & $\hat{Q}_y(p)$& $\hat{Q}_{y,RA}(p)$& $\hat{Q}_{y,DI}(p)$& $\hat{Q}_{y,REG}(p)$\\
 \hline
\multirow{2}{*}{ Median }& ($22$) holds for&$\sigma\leq 8$&$\sigma\leq 8$& $\sigma\leq 8$&$\sigma\leq 8$\\  
& ($22$) does not hold for& $\sigma\geq 9$ &$\sigma\geq 9$ &$\sigma\geq 9$ &$\sigma\geq 9$ \\
\hline
$\alpha$-trimmed mean & ($21$) holds for&$\sigma\geq 9$ &$\sigma\geq 9$& $\sigma\geq 9$& $\sigma\geq 9$\\  
with $\alpha$=$0.1$& ($21$) does not hold for& $\sigma\leq 8$&$\sigma\leq 8$& $\sigma\leq 8$& $\sigma\leq 8$\\
\hline
$\alpha$-trimmed mean & ($21$) holds for&$\sigma$=$1$&$\sigma$=$1$& $\sigma$=$1$& $\sigma\leq 2$\\ 
with $\alpha$=$0.3$& ($21$) does not hold for& $\sigma\geq 2$& $\sigma\geq 2$&  $\sigma\geq 2$&$\sigma\geq 3$\\
\hline
Interquartile & ($22$) holds for&$\sigma\leq 8$&$\sigma\leq 8$& $\sigma\leq 2$& $\sigma\leq 2$\\  
range& ($22$) does not hold for& $\sigma\geq 9$&$\sigma\geq 9$& $\sigma\geq 3$& $\sigma\geq 3$\\
\hline
Bowley's&\multirow{2}{*}{ ($22$) holds for } &$\sigma\geq 2$ \& &$\sigma\geq 2$ \& & $\sigma\geq 3$ \& & $\sigma\geq 3$ \& \\ 
 measure &&$\sigma\leq 8$&  $\sigma\leq 8$& $\sigma\leq 8$& $\sigma\leq 8$\\
of&\multirow{2}{*}{($22$) does not hold for}& $\sigma$=$1$ \& &$\sigma$=$1$ \& & $\sigma\leq 2$ \& & $\sigma\leq 2$ \& \\
 skewness&&$\sigma\geq 9$&$\sigma\geq 9$& $\sigma\geq 9$& $\sigma\geq 9$\\ 
\hline
\end{tabular}
\end{table}
\par

Finally, we discuss the conditions ($23$) and ($24$) of the main text in Tables \ref{table 3} and \ref{table 11} below for different superpopulation distributions of $Y_i$'s and $X_i$'s, and different values of $\lambda$.
\begin{table}[h] 
\centering  
\caption{Discussion of the condition ($23$) in the main article}
\label{table 3}
\begin{threeparttable}[b]
\begin{tabular}{cc}
\hline
Superpopulation distribution of $Y_i$'s& The condition ($23$) holds iff\\
\hline
Exponential power distribution with&  \multirow{2}{*}{\tnote{1} \ \ $\alpha^2\Gamma(3/\alpha)>\Gamma^3 (1/\alpha)$} \\
location $\mu\in \mathds{R}$, scale $\sigma>0$  and shape $\alpha>0$&\\
\hline
Student's $t$-distribution with& \multirow{2}{*}{\tnote{1} \ \  $4 \Gamma^2((m+1)/2)>(m-2)\pi\Gamma^2 (m/2)$}\\
degrees of freedom $m>2$&\\
\hline
\end{tabular}
\begin{tablenotes}
\item[1] Here, $\Gamma(\cdot)$ denotes the gamma function.
\end{tablenotes}
\end{threeparttable}
\end{table}
\begin{table}[h] 
\centering  
\caption{Discussion of the condition ($24$) in the main article}

\label{table 11}
\begin{tabular}{ccc}
\hline
 Superpopulation & Superpopulation & \multirow{2}{*}{$\lambda$}\\
 distribution of $Y_i$'s& distribution of $X_i$'s& \\
\hline
 Normal distribution & Any distribution& The condition ($24$)\\
 with mean $\mu\in \mathds{R}$& supported & holds for any \\
  and variance $\sigma^2>0$&on $(0,\infty)$&$\lambda\in (0,1)$\\
 \hline
 Standard  &$X_i$=$\max\{Y_i,0\}$& The condition ($24$)\\
 Laplace&for &holds iff \\
distribution&$i$=$1,\ldots,N$& $\lambda\in(0,0.25)$\\
\hline
\end{tabular}
\end{table}
\section{Estimators of some superpopulation parameters related to asymptotic covariance kernels}
In this section, we provide estimators of various superpopulation parameters involved in the expression of $\zeta_i(p)$ in  Table $1$  in the main article for high entropy and RHC sampling designs. We also provide estimators of various superpopulation parameters involved in the expression of $\zeta_{hjl}^{\prime}(p)$ in Table $3$ in the main text for stratified multistage cluster sampling design with SRSWOR. Note that $\sqrt{n}(\hat{Q}_{y}(p+1/\sqrt{n})-\hat{Q}_{y}(p-1/\sqrt{n}))/2$ with $d(i,s)$=$(N\pi_i)^{-1}$ in Table \ref{table 7} below was considered as an estimator of $1/f_y(Q_y(p))$ earlier in \cite{MR1292550}.

\begin{table}[h!]
\caption{Estimators of various superpopulation parameters involved in the expression of $\zeta_i(p)$ in  Table $1$  in the main article for high entropy and RHC sampling designs}
\label{table 7}
\renewcommand{\arraystretch}{0.85}
\begin{center}
\begin{tabular}{ccc} 
\hline
Parameters& \multicolumn{2}{c}{Estimators}\\
\hline
&High entropy sampling designs& RHC sampling design\\
\hline
$Q_{y}(p)$& $\hat{Q}_{y}(p)$ with $d(i,s)$=$(N\pi_i)^{-1}$& $\hat{Q}_{y}(p)$ with $d(i,s)$=$A_i/NX_i$\\
\hline
$Q_{x}(p)$ & $\hat{Q}_{x}(p)$ with $d(i,s)$=$(N\pi_i)^{-1}$& $\hat{Q}_{x}(p)$ with $d(i,s)$=$A_i/NX_i$\\
\hline
\multirow{2}{*}{$1/f_y(Q_y(p))$}& $\sqrt{n}(\hat{Q}_{y}(p+1/\sqrt{n})-$& $\sqrt{n}(\hat{Q}_{y}(p+1/\sqrt{n})-$\\
& $\hat{Q}_{y}(p-1/\sqrt{n}))/2$& $\hat{Q}_{y}(p-1/\sqrt{n}))/2$\\ 
\hline
\multirow{2}{*}{$1/f_x(Q_y(p))$}& $\sqrt{n}(\hat{Q}_{x}(p+1/\sqrt{n})-$& $\sqrt{n}(\hat{Q}_{x}(p+1/\sqrt{n})-$\\
& $\hat{Q}_{x}(p-1/\sqrt{n}))/2$& $\hat{Q}_{x}(p-1/\sqrt{n}))/2$\\
\hline
$E_{\textbf{P}}(Y_i)$&$\sum_{i\in s}Y_i/N\pi_i$&$\sum_{i\in s} Y_iA_i/NX_i$\\
\hline
$E_{\textbf{P}}(X_i)$&$\sum_{i\in s} X_i/N\pi_i$& $\sum_{i\in s} A_i/N$\\
\hline
$E_{\textbf{P}}(X_iY_i)$ &$\sum_{i\in s}X_iY_i/N\pi_i$&$\sum_{i\in s} Y_i A_i/N$\\
\hline
$E_{\textbf{P}}(X_i)^2$ & $\sum_{i\in s}X_i^2/N\pi_i$ & $\sum_{i\in s} X_i A_i/N$\\
\hline
\end{tabular}
\end{center}
\end{table}

\begin{table}[h!]
\caption{Estimators of various superpopulation parameters involved in the expression of $\zeta_{hjl}^{\prime}(p)$ in Table $3$ in the main article for stratified multistage cluster sampling design with SRSWOR}
\label{table 10}
\renewcommand{\arraystretch}{1}
\begin{threeparttable}[b]
\centering
\begin{tabular}{cc} 
\hline
Parameters& Estimators\\
\hline
$Q_{y,H}(p)$& $\hat{Q}_{y}(p)$ with $d(i,s)$=$(N\pi_i)^{-1}$=$M_h N_{hj}/N m_h r_h$\\
\hline
$Q_{x,H}(p)$ & $\hat{Q}_{x}(p)$ with $d(i,s)$=$(N\pi_i)^{-1}$=$M_h N_{hj}/N m_h r_h$\\
\hline
$1/f_{y,H}(Q_{y,H}(p))$& $\sqrt{n}(\hat{Q}_{y}(p+1/\sqrt{n})-\hat{Q}_{y}(p-1/\sqrt{n}))/2$\\
\hline
$1/f_{x,H}(Q_{x,H}(p))$& $\sqrt{n}(\hat{Q}_{x}(p+1/\sqrt{n})-\hat{Q}_{x}(p-1/\sqrt{n}))/2$\\
\hline
$\sum_{h=1}^H (N_h/N) E_{\textbf{P}}(X_{hjl}^{\prime})$ as well as $\text{\tnote{2} \ }\Theta_1$&$\sum_{h=1}^H \sum_{j\in s_h, l\in s_{hj}}M_h N_{hj}X_{hjl}^{\prime}/m_h r_h N $ \\
\hline
$\sum_{h=1}^H (N_h/N) E_{\textbf{P}}(Y_{hjl}^{\prime})$ as well as $\text{\tnote{2} \ }\Theta_2$& $\sum_{h=1}^H$ $\sum_{j\in s_h,l\in s_{hj}} M_h N_{hj} Y_{hjl}^{\prime}/m_h r_h N$\\
\hline
$\sum_{h=1}^H (N_h/N) E_{\textbf{P}}(X_{hjl}^{\prime}Y_{hjl}^{\prime})$ as well as $\text{\tnote{2} \ }\Theta_3$ &$\sum_{h=1}^H \sum_{j\in s_h, l\in s_{hj}} M_h N_{hj}X_{hjl}^{\prime}Y_{hjl}^{\prime}/m_h r_h N$ \\
\hline
$\sum_{h=1}^H (N_h/N) E_{\textbf{P}}(X_{hjl}^{\prime})^2$ as well as $\text{\tnote{2} \ }\Theta_4$ & $\sum_{h=1}^H\sum_{j\in s_h, l\in s_{hj}} M_h N_{hj}(X_{hjl}^{\prime})^2/m_h r_h N$\\
\hline
\end{tabular}
\begin{tablenotes}
\item[2] The $\Theta_1$, $\Theta_2$, $\Theta_3$ and $\Theta_4$ are as in Assumption $10$ in Section $4$ in the main text. 
\end{tablenotes}
\end{threeparttable}
\end{table}
\clearpage

\section{Additional results required for proving the main results for single stage sampling designs}\label{sec 1}
Let us fix $k\geq 1$ and $p_1,\ldots,p_k\in (0,1)$, and recall $\textbf{V}_1,\ldots,\textbf{V}_N$ from the begining of Section \ref{sec s1} in this supplement. Define  $\hat{\overline{\textbf{V}}}_1$=$\sum_{i\in s}(N\pi_i)^{-1} \textbf{V}_i$. Suppose that $P(s,\omega)$ denotes a high entropy sampling design satisfying Assumption $2$, and $Q(s,\omega)$ denotes a rejective sampling design having inclusion probabilities equal to those of $P(s,\omega)$. Such a rejective sampling design always exists (see \cite{MR1624693}). Now, we state the following lemma. 
\begin{lemma}\label{lem 1}
Fix $\textbf{b}\in \mathds{R}^{2k}$ such that $\textbf{b}\neq 0$. Suppose that Assumption $1$ in the main article holds. Then, under $Q(s,\omega)$ as well as $P(s,\omega)$, we have 
$$
\sqrt{n}\textbf{b}\hat{\overline{\textbf{V}}}_1^T\xrightarrow{\mathcal{L}}N(0,\textbf{b}\Gamma\textbf{b}^T)\text{ as }\nu\rightarrow\infty \textit{ a.s. }[\textbf{P}],
$$
 where $\Gamma$ is as mentioned in Assumption $2$-\textsl{(ii)}.
\end{lemma}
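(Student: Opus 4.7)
The plan is a two-stage argument. First, I would establish the CLT under the rejective design $Q(s,\omega)$ via Hájek's central limit theorem for rejective sampling, and then transfer the conclusion to the high entropy design $P(s,\omega)$ by invoking the asymptotic equivalence of $P$ and $Q$ on the level of total variation (or Hellinger) distance.

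Step~1 (rejective case). Rewrite
$$
\sqrt{n}\,\textbf{b}\hat{\overline{\textbf{V}}}_1^T=\sqrt{n}\sum_{i\in s}(N\pi_i)^{-1}\textbf{b}\textbf{V}_i^T,
$$
a Horvitz--Thompson type statistic. Since $\sum_{i=1}^N \textbf{V}_i=0$ by construction, this statistic is already centered (no population total correction is needed). By the Hájek variance formula for rejective sampling, its asymptotic variance is $\textbf{b}(n\Sigma_N)\textbf{b}^T$, which by Assumption~2--\textsl{(ii)} converges a.s.\ $[\textbf{P}]$ to $\textbf{b}\Gamma\textbf{b}^T>0$ (the strict positivity uses $\textbf{b}\neq 0$ and the p.d.\ conclusion embedded in Assumption~2--\textsl{(i)}, cf.\ \eqref{eq 19} and \eqref{eq 29}). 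The Lindeberg condition required by Hájek's theorem reduces to controlling $\max_{i}|\textbf{V}_i-\textbf{T}\pi_i|$ against the Hájek variance; this is immediate because every component of $\textbf{V}_i$ lies in $[-1,1]$, and $\max_i |N\pi_i/n-1|$ is controlled by Assumption~2--\textsl{(ii)} (or equivalently Assumption~4 in the $\pi$PS case), so $\pi_i$ is of exact order $n/N$. Hájek's (1964) CLT for rejective sampling then yields
$$
\sqrt{n}\,\textbf{b}\hat{\overline{\textbf{V}}}_1^T\xrightarrow{\mathcal{L}} N(0,\textbf{b}\Gamma\textbf{b}^T)\quad\text{under }Q(s,\omega),\ \textit{a.s.}\,[\textbf{P}].
$$

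Step~2 (high entropy case). By Assumption~2, $P(s,\omega)$ is a high entropy design in the sense that the Kullback--Leibler divergence $D(P\|Q)$ between $P$ and its matching rejective counterpart $Q$ tends to zero as $\nu\to\infty$. By Pinsker's inequality this forces $\|P-Q\|_{TV}\to 0$, and consequently any statistic that converges in distribution under $Q$ converges to the same limit under $P$. Applying this to $\sqrt{n}\,\textbf{b}\hat{\overline{\textbf{V}}}_1^T$ transports the conclusion of Step~1 to $P(s,\omega)$.

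The main obstacle is the careful verification that Hájek's classical conditions are implied by Assumptions~1 and~2 in the exact form used here, in particular that (a) the limiting variance is genuinely nondegenerate along a.s.\ $[\textbf{P}]$ sequences of populations, and (b) the Lindeberg/boundedness ratio holds uniformly in $\nu$. Both reduce to the uniform boundedness of $\textbf{V}_i$ and the two-sided control of $\pi_i$ furnished by Assumption~2--\textsl{(ii)}; once these are in hand, Step~2 is almost automatic from the high entropy hypothesis.
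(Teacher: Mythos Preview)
Your proposal is correct and follows essentially the same route as the paper: establish the CLT under the rejective design $Q$ via H\'ajek's (1964) result and then transfer it to the high entropy design $P$ through the total-variation/Kullback--Leibler bound (the paper does this by citing Lemma~S4 of Dey and Chaudhuri (2024) together with Lemmas~2 and~3 of Berger (1998), which is exactly your Pinsker-type Step~2). One minor refinement: the high entropy hypothesis only guarantees $D(P\|R)\to 0$ for \emph{some} rejective $R$, not necessarily the one $Q$ sharing inclusion probabilities with $P$; the missing link is Berger's inequality $D(P\|Q)\le D(P\|R)$, after which your argument goes through verbatim.
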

\begin{proof}
 The proof follows exactly the same way as the derivation of the result, which appears in the proof of Lemma S$4$ in \cite{deychaudhuri2023}, that $\sqrt{n}\textbf{m}(\hat{\overline{\textbf{V}}}_1-\overline{\textbf{V}})^T\xrightarrow{\mathcal{L}}N(0,\textbf{m} \Gamma_1 \textbf{m}^T)$ as $\nu\rightarrow\infty$ under each of SRSWOR, LMS and any HE$\pi$PS sampling designs for any $\textbf{m}\in\mathbb{R}^{p}$, $\textbf{m}\neq 0$ and $\Gamma_1$=$\lim_{\nu\rightarrow\infty}\Sigma_1$.  
\end{proof}
 Next, recall $\{Z_i\}_{i=1}^N$  from ($4$) in the main text and define
\begin{equation}\label{eq 37}
F_{z,N}(t)=\sum_{i=1}^N \mathds{1}_{[Z_i\leq t]}/N\text{ and }\mathds{Z}_n(t)=\sqrt{n}\sum_{i\in s}(N\pi_i)^{-1}(\mathds{1}_{[Z_i\leq t]}-F_{z,N}(t)) 
\end{equation}
for $0\leq t\leq 1$. Let us also define 
\begin{equation}\label{eq 39}
B_{z,N}(u,t)=F_{z,N}(t)-F_{z,N}(u)\text{ and }\mathds{B}_n(u,t)=\mathds{Z}_n(t)-\mathds{Z}_n(u)
\end{equation}
for $0\leq u<t\leq 1$. Now, we state the following lemma. 
\begin{lemma}\label{lem 2}
Suppose that Assumption $1$ in the main text holds. Then, there exist constants $L_1,L_2>0$ such that under $Q(s,\omega)$, 
$$
E\big[\big(\mathds{B}_n(t_1,t_2)\big)^2\big(\mathds{B}_n(t_2,t_3)\big)^2\big]\leq L_1\big(B_{z,N}(t_1,t_3)\big)^2 \textit{ a.s. }[\textbf{P}]
$$
for any $0\leq t_1<t_2<t_3\leq 1$ and $\nu\geq 1$, and
$$
\overline{\lim}_{\nu\rightarrow\infty} E\big(\mathds{B}_n(u,t)\big)^4\leq L_2 \big(t-u\big)^2\textit{ a.s. } [\textbf{P}]
$$
for any $0\leq u<t\leq 1$.  
\end{lemma}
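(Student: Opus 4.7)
The plan is to reduce the moment bounds under the rejective design $Q(s,\omega)$ to analogous computations under a Poisson sampling design $\tilde P(s,\omega)$ with matching inclusion probabilities $\{\pi_i\}$, exploit the disjoint-support structure of the indicator increments, and assemble the pieces. Writing $W_i^{(u,v)}=\mathds{1}_{[u<Z_i\leq v]}-B_{z,N}(u,v)$, one has $\sum_{i=1}^N W_i^{(u,v)}=0$ and hence
$$
\mathds{B}_n(u,v)=\sqrt{n}\sum_{i=1}^N(N\pi_i)^{-1}(I_i-\pi_i)W_i^{(u,v)}.
$$
Under Assumption $1$ the $\pi_i$'s are bounded away from $0$ and $1$, so $(N\pi_i)^{-1}$ and $\pi_i(1-\pi_i)$ are uniformly controlled.

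Next, invoking the representation of $Q(s,\omega)$ as Poisson sampling conditioned on $\sum_{i=1}^N I_i=n$, together with H\'ajek's sharp bounds on the ratios $\pi_{i_1\cdots i_r}/(\pi_{i_1}\cdots\pi_{i_r})$ for $r\leq 4$, one obtains a comparison $E_Q[T]\leq C\,E_{\tilde P}[T]$ for non-negative polynomials $T$ in the $I_i$'s of degree at most $4$. Applying this to $T=\mathds{B}_n(t_1,t_2)^2\mathds{B}_n(t_2,t_3)^2$ and $T=\mathds{B}_n(u,t)^4$ reduces everything to Poisson computations in which $J_i:=I_i-\pi_i$ are independent and mean-zero.

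For the second bound, independence under $\tilde P$ yields
$$
E_{\tilde P}\bigl[\mathds{B}_n(u,t)^4\bigr]=3\bigl(\mathrm{Var}_{\tilde P}\mathds{B}_n(u,t)\bigr)^2+R_n,
$$
with $\mathrm{Var}_{\tilde P}\mathds{B}_n(u,t)\leq C\,B_{z,N}(u,t)$ following from the identity $\sum_{i=1}^N(W_i^{(u,t)})^2=NB_{z,N}(u,t)(1-B_{z,N}(u,t))$, and $|R_n|=O(n^2/N^3)B_{z,N}(u,t)$. Since $B_{z,N}(u,t)\to t-u$ a.s.\ $[\mathbf{P}]$ by SLLN and $n^2/N^3\to 0$ under Assumption $1$, $\overline{\lim}\,E[\mathds{B}_n(u,t)^4]\leq 3C^2(t-u)^2$. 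For the first bound, I would expand the quadruple sum and retain only the paired-index configurations; the decisive identity, obtained from $\mathds{1}_{[t_1<Z_i\leq t_2]}\mathds{1}_{[t_2<Z_i\leq t_3]}=0$, is
$$
\sum_{i=1}^N W_i^{(t_1,t_2)}W_i^{(t_2,t_3)}=-N\,B_{z,N}(t_1,t_2)\,B_{z,N}(t_2,t_3).
$$
Combined with $\sum_i(W_i^{(u,v)})^2\leq NB_{z,N}(u,v)$, the three non-diagonal pairings contribute $O(n^2/N^2)$ times $B_{z,N}(t_1,t_2)B_{z,N}(t_2,t_3)$ or its square, each bounded by $B_{z,N}(t_1,t_3)^2$ via $ab\leq((a+b)/2)^2$. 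The diagonal $i=j=k=l$ term is of lower order $O(n^2/N^3)B_{z,N}(t_1,t_3)^2$ and is absorbed into $L_1$, uniformly in $\nu\geq 1$.

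The hard part will be the rejective-to-Poisson moment comparison. The second-moment version is classical and follows from $0\leq\pi_i\pi_j-\pi_{ij}\leq\pi_i\pi_j$ for rejective sampling, but the fourth-moment extension requires the higher-order variants of H\'ajek's inclusion-probability asymptotics, and is where Assumption $1$ and the high-entropy framework enter essentially.
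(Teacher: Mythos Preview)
Your overall architecture is right---expand in centered indicators $J_i=I_i-\pi_i$, exploit $\mathds{1}_{[t_1<Z_i\leq t_2]}\mathds{1}_{[t_2<Z_i\leq t_3]}=0$, and control the resulting quadruple sum---but the reduction step you propose does not work as stated. A multiplicative comparison $E_Q[T]\leq C\,E_{\tilde P}[T]$ for non-negative degree-$4$ polynomials $T$ in the $I_i$'s does \emph{not} follow from bounds on the ratios $\pi_{i_1\cdots i_r}/(\pi_{i_1}\cdots\pi_{i_r})$: those ratios control individual monomials, but the multilinear expansion of $T$ has coefficients of both signs, so the monomial bounds do not aggregate. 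In fact, since $Q(s)=\tilde P(s)/\tilde P(|s|{=}n)$ on samples of size $n$ and $\tilde P(|s|{=}n)\asymp d^{-1/2}$ with $d=\sum_i\pi_i(1-\pi_i)\to\infty$, the only uniform ratio bound on $Q/\tilde P$ blows up, and no fixed constant $C$ can work for arbitrary non-negative $T$.

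The paper avoids this trap by working directly under $Q$ and invoking Corollary~5.1 of Boistard, Lopuha\"a and Ruiz-Gazen (2017) on the \emph{centered} mixed moments: for distinct indices, $|E_Q[J_{i_1}J_{i_2}]|\leq K n/N^2$, $|E_Q[J_{i_1}J_{i_2}J_{i_3}]|\leq K n^2/N^3$, and $|E_Q[J_{i_1}J_{i_2}J_{i_3}J_{i_4}]|\leq K n^2/N^4$. These are precisely the ``higher-order H\'ajek asymptotics'' you allude to, but they are to be fed directly into the expansion of $E_Q[(\mathds{B}_n(t_1,t_2))^2(\mathds{B}_n(t_2,t_3))^2]$ by index-coincidence pattern (all equal; one pair plus two singletons; two pairs; three distinct; four distinct), not used as input to a Poisson comparison. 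Once you insert these bounds term by term, the rest of your combinatorics---the identities for $\sum_i W_i^{(u,v)}$, $\sum_i(W_i^{(u,v)})^2$, and the disjoint-support observation---go through essentially as in the paper. A side remark: the boundedness of the $\pi_i$'s away from $0$ and $1$ comes from Assumption~2-(ii), which is built into the setup defining $Q(s,\omega)$, not from Assumption~1 alone.
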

\begin{proof}
Suppose that for $i$=$1,\ldots,N$, $\xi_i$=$1$, when the $i^{th}$ population unit is included in the sample, and $\xi_i$=$0$ otherwise. Further, suppose that $S_{k,N}$=$\{(i_1,\ldots,i_k): i_1,\ldots,i_k\in\{1,2,\ldots,N\}\text{ and }i_1,\ldots,i_k\text{ are all distinct}\}$ for $k$=$2,3,4$. Recall from the proof of the preceding Lemma that under $Q(s,\omega)$, $\sum_{i=1}^N\pi_i(1-\pi_i)/n$ is bounded away from $0$ as $\nu\rightarrow\infty$ \textit{a.s.} $[\textbf{P}]$. Then, it follows from the proof of Corollary $5.1$ in \cite{MR3670194} that there exists a constant $K_1>0$ such that for all $\nu\geq 1$ 
\begin{align}\label{eq A5} 
\begin{split}
&\max_{(i_1,i_2)\in S_{2,N}} \big|E\big((\xi_{i_1}-\pi_{i_1})(\xi_{i_2}-\pi_{i_2})\big)\big|<K_1 n/N^2,\max_{(i_1,i_2,i_3)\in S_{3,N}}\big|E\big((\xi_{i_1}-\pi_{i_1})\times\\
&(\xi_{i_2}-\pi_{i_2})(\xi_{i_3}-\pi_{i_3})\big)\big|<K_1 n^2/N^3,\text{ and }\max_{(i_1,i_2,i_3,i_4)\in S_{4,N}}\big|E\big((\xi_{i_1}-\pi_{i_1})(\xi_{i_2}-\pi_{i_2})\times\\
&(\xi_{i_3}-\pi_{i_3})(\xi_{i_4}-\pi_{i_4})\big)\big|<K_1 n^2/N^4
\end{split}
\end{align}
under $Q(s,\omega)$ \textit{a.s.} $[\textbf{P}]$. Now, let 
\begin{align*}
\begin{split}
&B_i=\mathds{1}_{[t_1<Z_i\leq t_2]}-B_{z,N}(t_1,t_2), \text{  }C_i=\mathds{1}_{[t_2<Z_i\leq t_3]}-B_{z,N}(t_2,t_3),\\
&\alpha_i=B_i(\xi_i/\pi_i-1)\text{ and }\beta_i=C_i(\xi_i/\pi_i-1)
\end{split}
\end{align*}
 for given any $i$=$1,\ldots,N$ and $0\leq t_1<t_2<t_3\leq 1$. Then, we have
\begin{align*}
\begin{split}
&E\big[\big(\mathds{B}_n(t_1,t_2)\big)^2\big(\mathds{B}_n(t_2,t_3)\big)^2\big]=(n^2/N^4)E\bigg[\sum_{i=1}^N \alpha_i^2\beta_i^2+\\
&\sum_{(i_1,i_2)\in S_{2,N}} \alpha_{i_1}\alpha_{i_2}\beta_{i_1} \beta_{i_2}+\sum_{(i_1,i_2)\in S_{2,N}} \alpha_{i_1}^2\beta_{i_2}^2+\sum_{(i_1,i_2)\in S_{2,N}} \alpha_{i_1}^2\beta_{i_1} \beta_{i_2}+\\
\end{split}
\end{align*}
\begin{align*}
\begin{split}
&\sum_{(i_1,i_2,i_3)\in S_{3,N}} \alpha_{i_1}^2\beta_{i_2} \beta_{i_3}+ \sum_{(i_1,i_2)\in S_{2,N}} \alpha_{i_1}\alpha_{i_2} \beta^2_{i_2}+\sum_{(i_1,i_2,i_3)\in S_{3,N}} \alpha_{i_1}\alpha_{i_2} \beta_{i_3}^2+\\
&\sum_{(i_1,i_2,i_3,i_4)\in S_{4,N}} \alpha_{i_1}\alpha_{i_2} \beta_{i_3}\beta_{i_4}\bigg].
\end{split}
\end{align*}
Note that $Q(s,\omega)$ satisfies Assumption $2$-\textsl{(ii)} because $P(s,\omega)$ satisfies Assumption $2$-\textsl{(ii)}, and $P(s,\omega)$ and $Q(s,\omega)$ have the same inclusion probabilities. Then, we have 
\begin{align}\label{eq A21}
\begin{split}
&(n^2/N^4)E\bigg[\sum_{i=1}^N \alpha_i^2\beta_i^2\bigg]=(n^2/N^4)\sum_{i=1}^N E(\xi_i-\pi_i)^4 B_i^2 C_i^2/\pi_i^4\leq (K_2/N)\sum_{i=1}^NB_i^2 C_i^2\\
&\leq K_3\big(B_{z,N}(t_1,t_3)\big)^2
\end{split}
\end{align}
\textit{a.s.} $[\textbf{P}]$ for all $\nu\geq 1$ and some constants $K_2, K_3>0$ since Assumption $1$ holds, and $\mathds{1}_{[t_1<Z_i\leq t_2]} \mathds{1}_{[t_2<Z_i\leq t_3]}$ =$0$ for any $0\leq t_1<t_2<t_3\leq 1$. Next, suppose that $\{\pi_{i_1 i_2}: 1\leq i_1<i_2\leq N\}$ are second order inclusion probabilities of $Q(s,\omega)$. Then, we note that
\begin{align}\label{eq A18}
\begin{split}
&(n^2/N^4)E\bigg[\sum_{(i_1,i_2)\in S_{2,N}} \alpha_{i_1}\alpha_{i_2}\beta_{i_1}\beta_{i_2}\bigg]=(n^2/N^4)\sum_{(i_1,i_2)\in S_{2,N}} E\bigg((\xi_{i_1}-\pi_{i_1})^2\times\\
&(\xi_{i_2}-\pi_{i_2})^2\bigg)B_{i_1}B_{i_2} C_{i_1} C_{i_2}/\pi_{i_1}^2\pi_{i_2}^2\leq (K_4/n^2)\sum_{(i_1,i_2)\in S_{2,N}} (|\pi_{i_1 i_2}-\pi_{i_1}\pi_{i_2}|+\pi_{i_1}\pi_{i_2})\times\\
& |B_{i_1}C_{i_1}||B_{i_2} C_{i_2}| \leq (K_5/N^2)\sum_{(i_1,i_2)\in S_{2,N}} |B_{i_1}C_{i_1}||B_{i_2} C_{i_2}| \leq K_6(B_{z,N}\big(t_1,t_3)\big)^2
\end{split}
\end{align}
\textit{a.s.} $[\textbf{P}]$ for all $\nu\geq 1$ and some constants $K_4, K_5, K_6>0$ since Assumption $2$-\textsl{(ii)} holds, $E((\xi_{i_1}-\pi_{i_1})^2(\xi_{i_2}-\pi_{i_2})^2)$=$(\pi_{i_1 i_2}-\pi_{i_1}\pi_{i_2})(1-2\pi_{i_1})(1-2\pi_{i_2})+\pi_{i_1}\pi_{i_2}(1-\pi_{i_1})(1-\pi_{i_2})$ for $(i_1,i_2)\in S_{2,N}$, and $\max_{(i_1,i_2)\in S_{2,N}} \big|E\big((\xi_{i_1}-\pi_{i_1})(\xi_{i_2}-\pi_{i_2})\big)\big|$=$\max_{(i_1,i_2)\in S_{2,N}}|\pi_{i_1 i_2}-\pi_{i_1}\pi_{i_2}|<K_1 n/N^2$ \textit{a.s.} $[\textbf{P}]$ by \eqref{eq A5}. An inequality similar to \eqref{eq A18} holds for $(n^2/N^4)E[\sum_{(i_1,i_2)\in S_{2,N}} \alpha_{i_1}^2\beta_{i_2}^2]$. Since, $\big|E\big((\xi_{i_1}-\pi_{i_1})^3(\xi_{i_2}-\pi_{i_2})\big)\big|\leq 7|\pi_{i_1 i_2}-\pi_{i_1}\pi_{i_2}|$, inequalities similar to \eqref{eq A18} also hold for $(n^2/N^4)E$ $[\sum_{(i_1,i_2)\in S_{2,N}} \alpha_{i_1}^2\beta_{i_1} \beta_{i_2}]$ and $(n^2/N^4)E[\sum_{(i_1,i_2)\in S_{2,N}} \alpha_{i_1}\alpha_{i_2} \beta^2_{i_2}]$. Note that 
\begin{align*}
\begin{split}
&E\big((\xi_{i_1}-\pi_{i_1})^2(\xi_{i_2}-\pi_{i_2})(\xi_{i_3}-\pi_{i_3})\big)=(1-2\pi_{i_1})E\big((\xi_{i_1}-\pi_{i_1})(\xi_{i_2}-\pi_{i_2})\times\\
&(\xi_{i_3}-\pi_{i_3})\big)+\pi_{i_1}(1-\pi_{i_1})E\big((\xi_{i_2}-\pi_{i_2})(\xi_{i_3}-\pi_{i_3})\big)\text{ for }(i_1,i_2,i_3)\in S_{3,N}. 
\end{split}
\end{align*}
 Also, note that 
\begin{align*}
\begin{split}
&\max_{(i_1,i_2,i_3)\in S_{3,N}} \big|E\big((\xi_{i_1}-\pi_{i_1})(\xi_{i_2}-\pi_{i_2})(\xi_{i_3}-\pi_{i_3})\big)\big|<K_1 n^2/N^3\text{ and }\\
&\max_{(i_1,i_2,i_3,i_4)\in S_{4,N}}\big |E\big((\xi_{i_1}-\pi_{i_1})(\xi_{i_2}-\pi_{i_2})(\xi_{i_3}-\pi_{i_3})(\xi_{i_4}-\pi_{i_4})\big)\big|<K_1 n^2/N^4 \textit{ a.s. }[\textbf{P}]
\end{split}
\end{align*}
 by \eqref{eq A5}. Therefore, it can be shown in the same way as in \eqref{eq A21} and \eqref{eq A18} that under $Q(s,\omega)$, 
\begin{align*}
\begin{split}
&(n^2/N^4)E\bigg[\sum_{(i_1,i_2,i_3)\in S_{3,n}} \alpha_{i_1}^2\beta_{i_2} \beta_{i_3}\bigg]\leq K_7\big(B_{z,N}(t_1,t_3)\big)^2,\\
& (n^2/N^4)E \bigg[\sum_{(i_1,i_2,i_3)\in S_{3,N}} \alpha_{i_1}\alpha_{i_2} \beta_{i_3}^2\bigg]\leq K_{7}\big(B_{z,N}(t_1,t_3)\big)^2\text{ and }\\
&(n^2/N^4)E\bigg[ \sum_{(i_1,i_2,i_3,i_4)\in S_{4,N}} \alpha_{i_1}\alpha_{i_2}\beta_{i_3} \beta_{i_4}\bigg] \leq K_7\big(B_{z,N} (t_1,t_3)\big)^2\textit{ a.s. } [\textbf{P}] 
\end{split}
\end{align*}
 for all $\nu\geq 1$ and some constant $K_7>0$. Hence, there exists a constant $K_8>0$ such that under $Q(s,\omega)$, $E\big[\big(\mathds{B}_n(t_1,t_2)\big)^2$ $\big(\mathds{B}_n(t_2,t_3)\big)^2\big]$ $\leq K_8 \big(B_{z,N}(t_1,t_3)\big)^2$ \textit{a.s.} $[\textbf{P}]$ for any $\nu\geq 1$ and $0\leq t_1<t_2<t_3\leq 1$. 
\par

Next, one can shown that 
\begin{align*}
\begin{split}
&E \big(\mathds{B}_n(t_1,t_2)\big)^4=(n^2/N^4)E\bigg[\sum_{i=1}^N \alpha_i^4+2\sum_{(i_1,i_2)\in S_{2,N}} \alpha_{i_1}^2\alpha_{i_2}^2+\\
&2\sum_{(i_1,i_2)\in S_{2,N}} \alpha_{i_1}^3\alpha_{i_2}+2 \sum_{(i_1,i_2,i_3)\in S_{3,N}} \alpha_{i_1}^2\alpha_{i_2} \alpha_{i_3}+
\sum_{(i_1,i_2,i_3,i_4)\in S_{4,N}} \alpha_{i_1}\alpha_{i_2} \alpha_{i_3}\alpha_{i_4}\bigg].
\end{split}
\end{align*}
  It can also be shown in the same way as in \eqref{eq A21} and \eqref{eq A18} that under $Q(s,\omega)$, 
\begin{align*}
\begin{split} 
& (n^2/N^4)E\bigg[\sum_{i=1}^N \alpha_i^4\bigg]=O(1/n)\text{ as }\nu\rightarrow\infty \textit{ a.s. } [\textbf{P}],\text{ and }\\
& (n^2/N^4)E\bigg[2\sum_{(i_1,i_2)\in S_{2,N}} \alpha_{i_1}^2\alpha_{i_2}^2+2\sum_{(i_1,i_2)\in S_{2,N}}\alpha_{i_1}^3\alpha_{i_2}+2\sum_{(i_1,i_2,i_3)\in S_{3,N}} \alpha_{i_1}^2\alpha_{i_2} \alpha_{i_3}+\\
&\sum_{(i_1,i_2,i_3,i_4)\in S_{4,N}} \alpha_{i_1}\alpha_{i_2} \alpha_{i_3}\alpha_{i_4}\bigg]\leq K_9\big(B_{z,N}(t_1,t_2)\big)^2\text{ given any }\nu\geq 1 \textit{ a.s. }[\textbf{P}] 
\end{split}
\end{align*} 
 for some constant $K_9>0$. Therefore, under $Q(s,\omega)$, $\overline{\lim}_{\nu\rightarrow\infty}$ $E\big(\mathds{B}_n(t_1,t_2)\big)^4$ $\leq K_9\big(t_2-t_1)\big)^2$ \textit{a.s.} $[\textbf{P}]$ because $B_{z,N}(t_1,t_2)\rightarrow (t_2-t_1)$ \textit{a.s.} $[\textbf{P}]$ by SLLN. Hence, the result follows.
\end{proof}
Next, fix $k\geq 1$ and $p_1,\ldots,p_k\in (0,1)$ and define $\hat{\overline{\textbf{V}}}_2$=$\sum_{i\in s} A_i \textbf{V}_i/NX_i$, where $\textbf{V}_i$'s are as in the beginning of Section \ref{sec s1} in this supplement, and $A_i$'s are as in the $2^{nd}$ paragraph of Section $2$ in the main text. Also, recall from the statement of Theorem $3.2$ in Section $3$ of the main text that $\gamma$=$\sum_{r=1}^n N_r(N_r-1)(N(N-1))^{-1}$ with $N_r$ being the size of the $r^{th}$ group formed randomly in the first step of the RHC sampling design for $r$=$1,\ldots,n$
\begin{lemma}\label{lem 4}
Fix $\textbf{b}\in \mathds{R}^{2k}$ such that $\textbf{b}\neq 0$. Suppose that $\{N_r\}_{r=1}^n$ are as in ($6$) in the main text. Further, suppose that  $E_{\textbf{P}}(X_i)^{-1}<\infty$,  and Assumptions $1$, $4$ and $5$ in the main text hold. Then, under RHC sampling design, we have 
$$
\sqrt{n}\textbf{b}\hat{\overline{\textbf{V}}}_2^T\xrightarrow{\mathcal{L}}N(0,\textbf{b} \Gamma_6\textbf{b}^T)\text{ as }\nu\rightarrow\infty\textit{ a.s. }[\textbf{P}], 
$$
where $\Gamma_6$=$c E_{\textbf{P}}(X_i)E_{\textbf{P}}[(\textbf{G}_i-E_{\textbf{P}}(\textbf{G}_i))^T(\textbf{G}_i-E_{\textbf{P}}(\textbf{G}_i))/X_i]$, and $c$=$\lim_{\nu\rightarrow\infty}n \gamma$.
\end{lemma}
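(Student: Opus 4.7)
The plan is to exploit the conditional-independence structure of the RHC sampling design. Writing
\[
\sqrt n\,\textbf{b}\hat{\overline{\textbf{V}}}_2^T=\frac{\sqrt n}{N}\sum_{r=1}^{n}\frac{A_r}{X_{i_r}}\,\textbf{b}\textbf{V}_{i_r}^T,
\]
where $i_r$ is the unit drawn from the $r^{th}$ group $G_r$ of the RHC partition, and conditioning on the random partition $\{G_1,\ldots,G_n\}$, the $n$ summands become independent with conditional mean $\sum_{j\in G_r}\textbf{b}\textbf{V}_j^T$ for the $r^{th}$ one. Summing over $r$ gives $\sum_{i=1}^N\textbf{b}\textbf{V}_i^T=0$, so the overall conditional mean is zero and the estimator is design-unbiased for $\overline{\textbf{V}}=0$. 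I would then apply the Lindeberg--Feller CLT conditional on the partition and lift it to an unconditional CLT because the limiting variance will turn out to be deterministic.

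First I would compute the conditional variance
\[
\sigma_N^2=\frac{n}{N^2}\sum_{r=1}^{n}\Bigl\{A_r\!\!\sum_{j\in G_r}\!\frac{(\textbf{b}\textbf{V}_j^T)^2}{X_j}-\Bigl(\sum_{j\in G_r}\textbf{b}\textbf{V}_j^T\Bigr)^{\!2}\Bigr\}
\]
and take expectation over the random partition, which reproduces the classical RHC variance identity
\[
E_d[\sigma_N^2]=n\gamma\cdot\frac{1}{N}\sum_{j=1}^{N}X_j\cdot\frac{1}{N}\sum_{i=1}^{N}\frac{(\textbf{b}\textbf{V}_i^T)^2}{X_i}+o(1)\text{ \textit{a.s.} }[\textbf{P}],
\]
the $o(1)$ piece collecting the $T^2/\!\sum X$ correction, which vanishes because $\sum_i\textbf{V}_i=0$. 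Using $n\gamma\to c$ together with the superpopulation SLLN (applicable because $E_{\textbf{P}}(X_i)^{-1}<\infty$ and the coordinates of $\textbf{G}_i$ are bounded indicators), one obtains $E_d[\sigma_N^2]\to c\mu_x E_{\textbf{P}}[(\textbf{b}(\textbf{G}_i-E_{\textbf{P}}\textbf{G}_i)^T)^2/X_i]=\textbf{b}\Gamma_6\textbf{b}^T$ almost surely $[\textbf{P}]$. A fourth-moment bound on the partition-induced fluctuation of $\sigma_N^2$ about its mean (analogous to the combinatorial manipulations in Lemma \ref{lem 2}) upgrades this to $\sigma_N^2\to\textbf{b}\Gamma_6\textbf{b}^T$ in probability.

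Next, I would verify the Lindeberg condition for the triangular array of conditionally independent summands. Each summand is bounded by $C\sqrt n\, A_r/(NX_{i_r})$ because $\|\textbf{V}_i\|$ is dominated by a universal constant (its coordinates are indicators); Assumption $4$ together with a truncation at a slowly growing level, controlled by $E_{\textbf{P}}(X_i)^{-1}<\infty$, forces the maximum summand to be $o(\sigma_N)$ in probability. Conditional Lindeberg--Feller then yields convergence to $N(0,\textbf{b}\Gamma_6\textbf{b}^T)$, and since the limiting variance is deterministic, a standard Slutsky / characteristic-function argument removes the conditioning to give the unconditional CLT in the statement.

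\textbf{Main obstacle.} The delicate step is the unboundedness of $1/X_i$: one only has $E_{\textbf{P}}(X_i)^{-1}<\infty$, so $X_{i_r}^{-1}$ can be large, and one must show that its occurrence is rare enough that both the Lindeberg condition and the fourth-moment bound on $\sigma_N^2$ survive. This requires a truncation argument at a level $\tau_N\downarrow 0$, bounding the truncated piece by standard inequalities and the residual via the reciprocal-moment hypothesis. This is the feature that goes beyond the high-entropy argument of Lemma \ref{lem 1}, where no such weights appear.
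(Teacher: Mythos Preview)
The paper does not give a self-contained proof of this lemma; it simply says the argument is the same as the derivation in the proof of Lemma~S4 of \cite{deychaudhuri2023}, a companion supplement. Your proposal, by contrast, sketches a direct proof: condition on the random RHC partition, use the within-group conditional independence to apply Lindeberg--Feller, show the conditional variance converges to the deterministic limit $\textbf{b}\Gamma_6\textbf{b}^T$, and remove the conditioning. This is a standard and legitimate route to a CLT under RHC sampling (an alternative to the martingale-CLT approach of \cite{MR844032}), and the structure of your outline is sound.

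Your identification of the ``main obstacle'', however, is misplaced. You worry that $1/X_i$ is unbounded because the stated hypothesis only guarantees $E_{\textbf{P}}(X_i)^{-1}<\infty$, and you propose a truncation argument to control the Lindeberg condition and the fourth-moment bound on $\sigma_N^2$. But the lemma also assumes Assumption~4, and its use elsewhere in this supplement (see the proof of Lemma~\ref{lem 5}, where the inequality $Q_r/X_i\le K_1 N_r$ is invoked, and the LMS part of the proof of Lemma~\ref{lem s1}) shows that Assumption~4 bounds $\max_i X_i/\min_i X_i$ almost surely $[\textbf{P}]$. Hence $A_r/X_{i_r}\le K_1 N_r\le 2K_1 N/n$ under the choice~(6), so every summand in $\sqrt{n}\,\textbf{b}\hat{\overline{\textbf{V}}}_2^T$ is deterministically $O(n^{-1/2})$: the Lindeberg condition is immediate and no truncation is needed. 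The moment hypothesis $E_{\textbf{P}}(X_i)^{-1}<\infty$ enters only through the SLLN step identifying the limiting variance, not through any tail control. With this simplification your argument goes through without difficulty.
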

 Note that $\Gamma_6$ is p.d. by Assumption $5$. Also, note that the $\lim_{\nu\rightarrow\infty}n \gamma$ exists by Lemma S$1$ in \cite{deychaudhuri2023}. 
\begin{proof}
The proof follows exactly the same way as the derivation of the result, which appears in the proof of Lemma S$4$ in \cite{deychaudhuri2023}, that $\sqrt{n}\textbf{m}(\hat{\overline{\textbf{V}}}_2-\overline{\textbf{V}})^T \xrightarrow{\mathcal{L}}N(0,\textbf{m} \Gamma_2 \textbf{m}^T)$ as $\nu\rightarrow\infty$ under RHC sampling design for any $\textbf{m}\in\mathbb{R}^{p}$, $\textbf{m}\neq 0$ and $\Gamma_2$=$\lim_{\nu\rightarrow\infty}\Sigma_2$.
\end{proof}
 Before we state the next lemma, recall  $\{Z_i\}_{i=1}^N$ from ($4$) in the main text, and $F_{z,N}(t)$ and $B_{z,N}(u,t)$ from the paragraph preceding Lemma \ref{lem 2}.  Define $\tilde{\mathds{Z}}_n(t)$=$\sqrt{n}\sum_{i\in s}(A_i/NX_i)\times$ $(\mathds{1}_{[Z_i\leq t]}- F_{z,N}(t))$ for $0\leq t\leq 1$ and $\tilde{\mathds{B}}_n(u,t)$=$\tilde{\mathds{Z}}_n(t)-\tilde{\mathds{Z}}_n(u)$ for $0\leq u<t\leq 1$. 
\begin{lemma}\label{lem 5}
Suppose that $\{N_r\}_{r=1}^n$ are as in ($6$) in the main text, and Assumption $4$ in the main article holds. Then, there exist constants $L_1,L_2>0$ such that under RHC sampling design, 
$$
E\big[\big(\tilde{\mathds{B}}_n(t_1,t_2)\big)^2 \big(\tilde{\mathds{B}}_n(t_2,t_3)\big)^2\big]\leq L_1\big(B_{z,N}(t_1,t_3)\big)^2\textit{ a.s. } [\textbf{P}]
$$
for any $0\leq t_1<t_2<t_3\leq 1$ and $\nu\geq 1$, and 
$$
\overline{\lim}_{\nu\rightarrow\infty} E\big(\tilde{\mathds{B}}_n(u,t)\big)^4\leq L_2 \big(t-u\big)^2 \textit{ a.s. }[\textbf{P}]
$$
for any $0\leq u<t\leq 1$. 
\end{lemma}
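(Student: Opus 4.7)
The plan is to mirror the structure of the proof of Lemma \ref{lem 2}, replacing the rejective-sampling moment bounds on $\{\xi_i-\pi_i\}$ by the corresponding moment bounds on the RHC weights $(A_i/X_i)\mathds{1}_{i\in s}$. First I would set $B_i=\mathds{1}_{[t_1<Z_i\leq t_2]}-B_{z,N}(t_1,t_2)$, $C_i=\mathds{1}_{[t_2<Z_i\leq t_3]}-B_{z,N}(t_2,t_3)$, and define $\tilde\alpha_i=(A_i/NX_i)\mathds{1}_{i\in s}B_i-B_i/N$, $\tilde\beta_i=(A_i/NX_i)\mathds{1}_{i\in s}C_i-C_i/N$, so that $\tilde{\mathds{B}}_n(t_1,t_2)=\sqrt n\sum_i\tilde\alpha_i$ and $\tilde{\mathds{B}}_n(t_2,t_3)=\sqrt n\sum_i\tilde\beta_i$. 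Expanding the product $E[(\tilde{\mathds{B}}_n(t_1,t_2))^2(\tilde{\mathds{B}}_n(t_2,t_3))^2]$ then yields the same nine-term decomposition over $S_{2,N}$, $S_{3,N}$ and $S_{4,N}$ that appears in the proof of Lemma \ref{lem 2}.

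The second step is to bound each of the nine summands. The joint moments $E\bigl[\prod_{j=1}^{k}(A_{i_j}/X_{i_j})\mathds{1}_{i_j\in s}\bigr]$ for pairwise distinct indices $i_1,\ldots,i_k$ with $k\leq 4$ can be written down from the two-stage RHC randomness (random partition into groups of sizes $N_1,\ldots,N_n$ followed by PPS-within-group selection) and, after centering, mirror the rejective estimates in \eqref{eq A5}: they are of order $O(n^{-k+1})$ once one uses Assumption $4$ (which uniformly bounds $A_i/X_i$ by $\max_j X_j/\min_j X_j$) together with the existence of $\lim_{\nu\to\infty}n\gamma$ furnished by Lemma S$1$ of \cite{deychaudhuri2023}. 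Crucially, $B_iC_i=0$ identically since $(t_1,t_2]$ and $(t_2,t_3]$ are disjoint, so every summand containing both a $B$-factor and a $C$-factor at the same index vanishes; this eliminates the diagonal term $i_1=i_2=i_3=i_4$ and forces the surviving contributions to telescope into $(B_{z,N}(t_1,t_3))^2$ through the inequality $|B_i|+|C_i|\leq\mathds{1}_{[t_1<Z_i\leq t_3]}+B_{z,N}(t_1,t_3)$, exactly as in \eqref{eq A21} and \eqref{eq A18}.

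For the fourth-moment bound, the analogous expansion with only $\tilde\alpha_i$ present (over $u$ and $t$) produces a diagonal contribution of order $O(n^{-1})$ and off-diagonal contributions bounded above by $K\,(B_{z,N}(u,t))^2$. Taking $\overline{\lim}_{\nu\to\infty}$ and applying SLLN, which yields $B_{z,N}(u,t)\to t-u$ \textit{a.s.} $[\mathbf{P}]$, then delivers the claimed $L_2(t-u)^2$ bound.

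The main obstacle I anticipate is obtaining the fourth-order joint-inclusion moment estimates for the RHC design with the correct $n$-dependence. In the high entropy setting these come for free from Corollary $5.1$ of \cite{MR3670194} as quoted in \eqref{eq A5}, whereas for RHC one has to derive them by conditioning first on the random group partition and then on the within-group PPS draws, and then handle the combinatorics of $N_1,\ldots,N_n$ via Lemma S$1$ of \cite{deychaudhuri2023}. Once those moment bounds are in place, the rest is the same mechanical bookkeeping as in \eqref{eq A21}--\eqref{eq A18}.
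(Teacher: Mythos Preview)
Your proposal is viable in principle but takes a different route from the paper, and the ``main obstacle'' you identify is precisely what the paper's organization avoids.

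You index by population unit $i=1,\ldots,N$ and plan to derive an RHC analogue of the joint-moment bounds \eqref{eq A5} for the centered weights $W_i=(A_i/X_i)\mathds{1}_{i\in s}-1$. The paper instead rewrites $\tilde{\mathds{Z}}_n(t)=\sqrt{n}\sum_{r=1}^n Q_r(\mathds{1}_{[z_r\leq t]}-F_{z,N}(t))/Nx_r$, i.e.\ as a sum over the $n$ groups, with $\alpha_r=Q_r B_r/x_r$ and $\beta_r=Q_r C_r/x_r$ attached to the unit actually drawn from group $r$. The decomposition then runs over $S_{k,n}$ rather than $S_{k,N}$. The payoff is that, conditional on the random partition, selections from distinct groups are \emph{independent} and each $W_i$ is conditionally centered, so $E_2$ of every cross product factorizes trivially; the only nontrivial step is the partition expectation $E_1$, which reduces to elementary formulas like $E_1(\xi_{i_1 r_1}\xi_{i_2 r_2})=N_{r_1}N_{r_2}/N(N-1)$ together with the bound $Q_r/x_r\leq N_r\cdot(\max_j X_j/\min_j X_j)$ from Assumption~$4$ and $n\max_r N_r/N\leq 2$ from ($6$). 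No fourth-order RHC inclusion moments are ever needed. Your approach would eventually recover the same structure after conditioning on the partition and sorting index configurations by how they distribute across groups, but that is a detour.

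Two minor slips. First, $B_iC_i$ is \emph{not} identically zero: only the raw indicator product $\mathds{1}_{(t_1,t_2]}(Z_i)\mathds{1}_{(t_2,t_3]}(Z_i)$ vanishes, while the centered product $B_iC_i$ equals $-\mathds{1}_{(t_1,t_2]}(Z_i)B_{z,N}(t_2,t_3)-\mathds{1}_{(t_2,t_3]}(Z_i)B_{z,N}(t_1,t_2)+B_{z,N}(t_1,t_2)B_{z,N}(t_2,t_3)$; the diagonal term therefore does not vanish but is bounded by $K(B_{z,N}(t_1,t_3))^2$ as in \eqref{eq A21}. Second, Assumption~$4$ alone bounds $\max_j X_j/\min_j X_j$, not $A_i/X_i$; you also need the group-size control $N_r\leq 2N/n$ from ($6$) to get $A_i/X_i\leq (2N/n)K_1$. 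Your stated order $O(n^{-k+1})$ for the centered $k$-fold moments also does not match \eqref{eq A5} (e.g.\ $k=3$ there gives $O(n^2/N^3)=O(n^{-1})$, not $O(n^{-2})$), though the orders that actually emerge from the RHC computation are sufficient.
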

\begin{proof}
Recall from Section $3$ in the main text that RHC sampling design is implemented in two steps. In the first step, the entire population is randomly divided into $n$ groups, say $\mathcal{P}_1,\ldots,\mathcal{P}_n$ of sizes $N_1\cdots,N_n$ respectively. Then, in the second step, a unit is selected from each group independently. For each $r$=$1,\ldots,n$, the $q^{th}$ unit from $\mathcal{P}_r$ is selected with probability $X_{qr}^{\prime}/Q_r$, where $X_{qr}^{\prime}$ is the $x$ value of the $q^{th}$ unit in $\mathcal{P}_r$ and $Q_r$=$\sum_{q=1}^{N_r}X_{qr}^{\prime}$. Let $E_1$ and $E_2$ denote design expectations with respect to the $1^{st}$ and the $2^{nd}$ steps, respectively. Suppose that $(y_r,x_r)$ is the value of $(y,x)$ corresponding to the $r^{th}$ unit in the sample for $r$=$1,\ldots,n$. Further, suppose that $z_r$=$F_y(y_r)$ for $r$=$1,\ldots,n$, where $F_y$ is the superpopulation distribution function of $y$. Define 
$$
\alpha_r=Q_r(\mathds{1}_{[t_1<z_r\leq t_2]}-B_{z,N}(t_1,t_2))/x_r\text{ and }\beta_r=Q_r(\mathds{1}_{[t_2<z_r\leq t_3]}-B_{z,N}(t_2,t_3))/x_r
$$
 for $0\leq  t_1<t_2<t_3\leq 1$ and $r$=$1,\ldots,n$. Note that $\tilde{\mathds{Z}}_n(t)$=$\sqrt{n}\sum_{i\in s}(A_i/NX_i)(\mathds{1}_{[Z_i\leq t]}-F_{z,N}(t))$=$\sqrt{n}\sum_{r=1}^n Q_r(\mathds{1}_{[z_r\leq t]}-F_{z,N}(t))/Nx_r$. Then, we have 
\begin{align*}
\begin{split}
&E\big[\big(\tilde{\mathds{B}}_n(t_1,t_2)\big)^2\big(\tilde{\mathds{B}}_n(t_2,t_3)\big)^2\big]=(n^2/N^4)E_1E_2\bigg[\sum_{r=1}^n \alpha_r^2\beta_r^2+\\
&\sum_{(r_1,r_2)\in S_{2,n}} \alpha_{r_1}\alpha_{r_2}\beta_{r_1}\beta_{r_2}+\sum_{(r_1,r_2)\in S_{2,n}} \alpha_{r_1}^2\beta_{r_2}^2+\sum_{(r_1,r_2)\in S_{2,n}} \alpha_{r_1}^2\beta_{r_1} \beta_{r_2}+\\
&\sum_{(r_1,r_2)\in S_{2,n}} \alpha_{r_1}\alpha_{r_2} \beta^2_{r_2}+\sum_{(r_1,r_2,r_3)\in S_{3,n}} \alpha_{r_1}^2\beta_{r_2} \beta_{r_3}+ \sum_{(r_1,r_2,r_3)\in S_{3,n}} \alpha_{r_1}\alpha_{r_2} \beta_{r_3}^2+\\
&\sum_{(r_1,r_2,r_3,r_4)\in S_{4,n}} \alpha_{r_1}\alpha_{r_2} \beta_{r_3}\beta_{r_4}\bigg],
\end{split}
\end{align*}
where $S_{k,n}$=$\{(r_1,\ldots,r_k):r_1,\ldots,r_k\in\{1,2,\ldots,n\}\text{ and }r_1,\ldots,r_k\text{ are all distinct}\}$ for $k$= $2,3,4$. Suppose that for $i$=$1,\ldots,N$, 
\begin{align*}
\begin{split}
 \xi_{ir}=
\begin{cases}
1,\text{ when the }i^{th}\text{ population unit is selected in the }r^{th}\text{ group }\mathcal{P}_r,\text{ and }\\
0,\text{ otherwise.}\\
\end{cases}
\end{split}
\end{align*}
 Note that $n\max_{1\leq r\leq n}N_r/N\leq 2$ for all $\nu\geq 1$ because $\{N_r\}_{r=1}^n$ are as in ($6$) in the main text. Recall $B_i$ and $C_i$ from the proof of Lemma \ref{lem 2}. Then, we have 
\begin{align}\label{eq A19}
\begin{split}
&(n^2/N^4)E_1\bigg[\sum_{r=1}^n E_2(\alpha_r^2\beta_r^2)\bigg]=(n^2/N^4)E_1\bigg[\sum_{r=1}^n \bigg(\sum_{i=1}^N B_i^2 C_i^2 \xi_{ir}/X_i^3\bigg) Q_r^3\bigg]\leq\\
&  (K_1)^3 (n^2/N^4)E_1\bigg[\sum_{r=1}^n \bigg(\sum_{i=1}^N B_i^2 C_i^2 \xi_{ir}\bigg) N_r^3\bigg]\leq (K_2/N)\bigg[\sum_{i=1}^N B_i^2 C_i^2E_1\bigg(\sum_{r=1}^n \xi_{ir}\bigg)\bigg]\\
&=(K_2/N)\bigg[\sum_{i=1}^N B_i^2 C_i^2\bigg]\leq K_3\big(B_{z,N}(t_1,t_3)\big)^2
\end{split}
\end{align}
\textit{a.s.} $[\textbf{P}]$ for all $\nu\geq 1$ and some constants $K_2,K_3>0$ because Assumption $4$ holds, and $\sum_{r=1}^n \xi_{ir}$=$1$ for any $1\leq i\leq N$. Next, recall $S_{2,N}$ from the proof of Lemma \ref{lem 2} and note that
\begin{align}\label{eq A20}
\begin{split}
&(n^2/N^4)E_1\bigg[\sum_{(r_1,r_2)\in S_{2,n}} E_2(\alpha_{r_1}\alpha_{r_2}\beta_{r_1}\beta_{r_2})\bigg]=(n^2/N^4)\times\\
&E_1\bigg[\sum_{(r_1,r_2)\in S_{2,n}} E_2(\alpha_{r_1}\beta_{r_1})E_2(\alpha_{r_2}\beta_{r_2})\bigg]=(n^2/N^4)E_1\bigg[\sum_{(r_1,r_2)\in S_{2,n}} \\
\end{split}
\end{align}
\begin{align*}
\begin{split}
& \bigg(\sum_{(i_1,i_2)\in S_{2,N}} B_{i_1} C_{i_1}B_{i_2} C_{i_2} \xi_{i_1 {r_1}}\xi_{i_2 {r_2}}/X_{i_1}X_{i_2}\bigg) Q_{r_1} Q_{r_2}\bigg]\leq (K_1)^2(n^2/N^4)\times \\
&E_1\bigg[\sum_{(r_1,r_2)\in S_{2,n}} \bigg(\sum_{(i_1,i_2)\in S_{2,N}} |B_{i_1} C_{i_1}||B_{i_2} C_{i_2}|\xi_{i_1 {r_1}}\xi_{i_2 {r_2}}\bigg) N_{r_1} N_{r_2}\bigg]\leq \\
&K_4\sum_{i_1=1}^N |B_{i_1} C_{i_1}|\sum_{i_2=1}^N|B_{i_2} C_{i_2}|/N(N-1)\leq K_5\big(B_{z,N}(t_1,t_3)\big)^2
\end{split}
\end{align*}
\textit{a.s.} $[\textbf{P}]$ for all $\nu\geq 1$ and some constants $K_4, K_5>0$ since units are selected from $\mathcal{P}_{r_1}$ and $\mathcal{P}_{r_2}$ independently, $\{N_r\}_{r=1}^n$ are as in ($6$) in the main text, and $E_1(\xi_{i_1 r_1}\xi_{i_2 r_2})$=$N_{r_1}N_{r_2}/N(N-1)$ for any $(r_1,r_2)\in S_{2,n}$ and $(i_1,i_2)\in S_{2,N}$. It can be shown that an inequality similar to \eqref{eq A20} above holds for each of $(n^2/N^4)E_1E_2[\sum_{(r_1,r_2)\in S_{2,n}}\alpha_{r_1}^2\beta_{r_2}^2]$, $(n^2/N^4)E_1E_2[\sum_{(r_1,r_2)\in S_{2,n}}$ $ \alpha_{r_1}^2\beta_{r_1} \beta_{r_2}]$ and $(n^2/N^4)E_1E_2$ $[\sum_{(r_1,r_2)\in S_{2,n}} \alpha_{r_1}\alpha_{r_2} \beta^2_{r_2}]$. Note that 
$$
E_1( \xi_{i_1 {r_1}}\xi_{i_2 {r_2}}\xi_{i_3 {r_3}})=N_{r_1}N_{r_2}N_{r_3}/(N(N-1)(N-2))
$$
 for $(r_1,r_2,r_3)\in S_{3,n}$ and $(i_1,i_2,i_3)\in S_{3,N}$, and $\sum_{(r_1,r_2,r_3)\in S_{3,n}}$ $N_{r_1}N_{r_2}N_{r_3}/N(N-1)(N-2)$ is bounded. Also, note that 
$$
E_1(\xi_{i_1 r_1}\xi_{i_2 r_2}\xi_{i_3 r_3}\xi_{i_4 r_4})=(N_{r_1}N_{r_2} N_{r_3} N_{r_4}) /N(N-1)(N-2)(N-3)
$$
 for $(r_1,r_2,r_3,r_4)\in S_{4,n}$ and $(i_1,i_2,i_3,$ $i_4)\in S_{4,N}$, and $\sum_{(r_1,r_2,r_3,r_4)\in S_{4,n}} N_{r_1}N_{r_2}N_{r_3}N_{r_4} /N(N-1)(N-2)(N-3)$ is bounded. Then, it can be shown in the same way as in \eqref{eq A19} and \eqref{eq A20} above that 
\begin{align*}
\begin{split}
&(n^2/N^4)E_1E_2 \bigg [\sum_{(r_1,r_2,r_3)\in S_{3,n}}\alpha_{r_1}^2\beta_{r_2} \beta_{r_3}\bigg]\leq K_6 \big(B_{z,N}(t_1,t_3)\big)^2, \\
&(n^2/N^4)E_1E_2\bigg [\sum_{(r_1,r_2,r_3)\in S_{3,n}} \alpha_{r_1}\alpha_{r_2} \beta_{r_3}^2 \bigg] \leq K_6 \big(B_{z,N}(t_1,t_3)\big)^2\text{ and }\\
&(n^2/N^4)E_1E_2 \bigg[\sum_{(r_1,r_2,r_3,r_4)\in S_{4,n}} \alpha_{r_1}\alpha_{r_2} \beta_{r_3}\beta_{r_4}\bigg]\leq K_6 \big(B_{z,N}(t_1, t_3)\big)^2  \textit{ a.s. } [\textbf{P}]
\end{split}
\end{align*}
 for all $\nu\geq 1$ and some constant $K_6>0$. Thus  
$$
E\big[\big(\tilde{\mathds{B}}_n(t_1,t_2)\big)^2\big(\tilde{\mathds{B}}_n(t_2,t_3)\big)^2\big]\leq K_7 \big(B_{z,N}(t_1,t_3)\big)^2 \textit{ a.s. } [\textbf{P}]
$$
 for all $\nu\geq 1$ and some constant $K_7>0$. 
\par

Next, note that 
\begin{align*}
\begin{split}
&E\big(\tilde{\mathds{B}}_n(t_1,t_2)\big)^4=(n^2/N^4)E_1E_2\bigg[\sum_{r=1}^n \alpha_r^4+2\sum_{(r_1,r_2)\in S_{2,n}} \alpha_{r_1}^2\alpha_{r_2}^2+\\
\end{split}
\end{align*}
\begin{align*}
\begin{split}
&2 \sum_{(r_1,r_2)\in S_{2,n}} \alpha_{r_1}^3\alpha_{r_2}+2\sum_{(r_1,r_2,r_3)\in S_{3,n}}\alpha_{r_1}^2\alpha_{r_2} \alpha_{r_3}+\sum_{(r_1,r_2,r_3,r_4)\in S_{4,n}} \alpha_{r_1}\alpha_{r_2} \alpha_{r_3}\alpha_{r_4}\bigg].
\end{split}
\end{align*}
 It can be shown in the same way as in \eqref{eq A19} and \eqref{eq A20} above that 
\begin{align*}
\begin{split}
&(n^2/N^4)E_1E_2\bigg[\sum_{r=1}^n \alpha_r^4\bigg]=O(1/n)\text{ as }\nu\rightarrow\infty \textit{ a.s. } [\textbf{P}],\text{ and }(n^2/N^4)\times\\
&E_1E_2 \bigg[2\sum_{(r_1,r_2)\in S_{2,n}} \alpha_{r_1}^2\alpha_{r_2}^2+2\sum_{(r_1,r_2)\in S_{2,n}} \alpha_{r_1}^3\alpha_{r_2}+2\sum_{(r_1,r_2,r_3)\in S_{3,n}} \alpha_{r_1}^2\alpha_{r_2}\alpha_{r_3}+\\
& \sum_{(r_1,r_2,r_3,r_4)\in S_{4,n}}\alpha_{r_1}\alpha_{r_2} \alpha_{r_3}\alpha_{r_4}\bigg]\leq K_8\big(B_{z,N}(t_1,t_2)\big)^2\text{ given any }\nu\geq 1\textit{ a.s. } [\textbf{P}] 
\end{split}
\end{align*}
for some constant $K_8>0$. Therefore, $\overline{\lim}_{\nu\rightarrow\infty} E\big(\tilde{\mathds{B}}_n(t_1,t_2)\big)^4\leq K_8 \big(t_2-t_1\big)^2$ \textit{a.s.} $[\textbf{P}]$ since $B_{z,N}(t_1,t_2)$ $\rightarrow (t_2-t_1)$ as $\nu\rightarrow\infty$ \textit{a.s.} $[\textbf{P}]$ by SLLN.
\end{proof}
Next, we state the following lemma, which will be required to prove Theorem $5.3$. 
\begin{lemma}\label{lem 8}
\textsl{(i)} Fix $0<\alpha<\beta<1$. Suppose that the assumptions of Theorem $3.1$ hold, $K(p_1,p_2)$ is as in  ($5$) in Section $3$ of the main article, and $\hat{K}(p_1,p_2)$ is as in ($12$) in Section $5.1$ of the main text.  Then, under $\textbf{P}^*$,
\begin{equation}\label{eq A13}
\sup_{p_1,p_2\in[\alpha,\beta]}|\hat{K}(p_1,p_2)-K(p_1,p_2)|=O_p(1)\text{ and }\hat{K}(p_1,p_2)\xrightarrow{p}K(p_1,p_2)\text{ as }\nu\rightarrow\infty
\end{equation}
for any $p_1,p_2\in [\alpha,\beta]$ and high entropy sampling design satisfying Assumption $2$. \\
\textsl{(ii)} Further, if the assumptions of Theorem $3.2$ hold, $K(p_1,p_2)$ is as in  ($7$) in Section $3$ of the main article, and $\hat{K}(p_1,p_2)$ is as in ($13$) in Section $5.1$ of the main text.  Then, the above results hold under RHC sampling design.\\
\end{lemma}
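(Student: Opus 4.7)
The plan is to reduce $\hat{K}(p_1,p_2)-K(p_1,p_2)$ to the convergence of the plug-in building blocks listed in Table~\ref{table 7} and then apply a continuous-mapping argument uniformly on $[\alpha,\beta]^2$. By (12) of the main text, $\hat{K}(p_1,p_2)$ is a smooth function of (a) the sample quantiles $\hat{Q}_y(p_j),\hat{Q}_x(p_j)$, $j=1,2$; (b) the density-reciprocal estimators $\widehat{1/f_y}(p_j)=\sqrt{n}\bigl(\hat{Q}_y(p_j+1/\sqrt{n})-\hat{Q}_y(p_j-1/\sqrt{n})\bigr)/2$ and its $x$-analogue; and (c) the Horvitz--Thompson sample moments of $Y_i$, $X_i$, $X_iY_i$, $X_i^2$, which are scalar and do not depend on $(p_1,p_2)$. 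It therefore suffices to establish uniform convergence on $[\alpha,\beta]$ for the items in (a) and (b) together with $o_p(1)$ convergence for those in (c), and then invoke continuity of the composition.

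For (a), uniform convergence at rate $O_p(n^{-1/2})$ is immediate from Theorem~3.1, which gives weak convergence of $\sqrt{n}(\hat{Q}_y(\cdot)-Q_y(\cdot))$ and $\sqrt{n}(\hat{Q}_x(\cdot)-Q_x(\cdot))$ in the sup-norm topology on $[\alpha,\beta]$. For (c), convergence in $\textbf{P}^*$-probability of each HT sample moment follows by applying Lemma~\ref{lem 1} with $\textbf{V}_i$ replaced by the scalar centred quantities $Y_i-E_{\textbf{P}}(Y_i)$, $X_i-E_{\textbf{P}}(X_i)$, $X_iY_i-E_{\textbf{P}}(X_iY_i)$, $X_i^2-E_{\textbf{P}}(X_i^2)$ (the argument is identical to that in the proof of Lemma~S$4$ of \cite{deychaudhuri2023} cited there), together with SLLN for the corresponding finite-population means under Assumption~1.

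The main obstacle is (b). I would use the decomposition
\begin{align*}
\widehat{1/f_y}(p)-\frac{1}{f_y(Q_y(p))}&=\tfrac{1}{2}\Bigl\{\sqrt{n}(\hat{Q}_y-Q_y)(p+1/\sqrt{n})-\sqrt{n}(\hat{Q}_y-Q_y)(p-1/\sqrt{n})\Bigr\}\\
&\quad+\Bigl\{\frac{Q_y(p+1/\sqrt{n})-Q_y(p-1/\sqrt{n})}{2/\sqrt{n}}-\frac{1}{f_y(Q_y(p))}\Bigr\}.
\end{align*}
The deterministic second bracket tends to $0$ uniformly on $[\alpha,\beta]$ by the differentiability of $Q_y$, equivalently the positivity and continuity of $f_y$ on the compact set $Q_y([\alpha,\beta])$ guaranteed by Assumption~1. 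The first bracket is $o_p(1)$ uniformly in $p\in[\alpha,\beta]$ by asymptotic equicontinuity of the quantile process from Theorem~3.1: the limit is a.s.\ uniformly continuous on $[\alpha,\beta]$, so its modulus of continuity at scale $2/\sqrt{n}\to 0$ is $o_p(1)$, and this transfers to the pre-limit via the weak convergence. This equicontinuity step is the technical heart of the argument; the $x$-component is handled identically.

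Combining (a), (b) and (c) via Slutsky and continuous mapping, together with boundedness away from $0$ of the estimated densities and of the HT moments on high-probability events, yields $\sup_{p_1,p_2\in[\alpha,\beta]}|\hat{K}(p_1,p_2)-K(p_1,p_2)|=o_p(1)$, which is stronger than the stated $O_p(1)$ bound and implies the pointwise convergence in probability. Part~(ii) for RHC is proved by the identical route, replacing $d(i,s)=(N\pi_i)^{-1}$ by $d(i,s)=A_i/(NX_i)$ and invoking Theorem~3.2 together with Lemmas~\ref{lem 4} and~\ref{lem 5} in place of Theorem~3.1 and Lemma~\ref{lem 1}.
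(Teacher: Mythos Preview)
Your equicontinuity argument for (b) is correct and, in fact, cleaner than the paper's pointwise linearisation of $\hat Q_y(p\pm 1/\sqrt n)$. The gap is in the premise that $\hat K(p_1,p_2)$ is a \emph{smooth} function of the ingredients (a), (b), (c). From (12) and Table~1 of the main text, each $\hat\zeta_i(p)$ contains the indicator $\mathds{1}_{[Y_i\le\hat Q_y(p)]}$ (and its $x$-analogue), so $\hat K$ depends on $\hat Q_y(p_j)$ through step functions of the sampled $Y_i$'s, not through a continuous map. Slutsky plus continuous mapping therefore does not deliver your claimed uniform $o_p(1)$: you would additionally need to control terms of the type $(n/N^2)\sum_{i\in s}\bigl(\mathds{1}_{[Y_i\le\hat Q_y(p)]}-\mathds{1}_{[Y_i\le Q_y(p)]}\bigr)(\pi_i^{-1}-1)\pi_i^{-1}$, uniformly or pointwise in $p$, and this empirical-process step is absent from your sketch.

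The paper sidesteps the difficulty by aiming for less on the supremum and treating the indicator part only pointwise. For the $O_p(1)$ sup bound it does not try to make $\widehat{1/f_y}(p)$ converge; it only shows it is uniformly bounded in probability on $[\alpha,\beta]$ (via the quantile FCLT on the enlarged interval $[\alpha/2,(1+\beta)/2]$ together with $f_y\circ Q_y$ bounded away from $0$), and combines this with the trivial boundedness of the indicators and convergence of the scalar ratios to get $\sup|\hat K|=O_p(1)$, hence $\sup|\hat K-K|=O_p(1)$. Pointwise convergence is obtained through an intermediate $\tilde K(p_1,p_2)$ built from the true $\zeta_i$: one shows $\tilde K\xrightarrow{p}K$ by an HT-variance calculation, and $\hat K-\tilde K\xrightarrow{p}0$ by a sandwich inequality that gives $\sum_{i\in s}(N\pi_i)^{-1}\bigl(\mathds{1}_{[Y_i\le\hat Q_y(p)]}-\mathds{1}_{[Y_i\le Q_y(p)]}\bigr)\xrightarrow{p}0$ for each fixed $p$, together with the pointwise convergence $\widehat{1/f_y}(p)\xrightarrow{p}1/f_y(Q_y(p))$. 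Your equicontinuity idea would furnish this last density convergence more elegantly, but the indicator step is precisely what your proposal is missing.
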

\begin{proof} \textsl{(i)} Let us first consider a high entropy sampling design $P(s,\omega)$ satisfying Assumption $2$, and a rejective sampling design $Q(s,\omega)$ having inclusion probabilities equal to those of $P(s,\omega)$. Since, $K(p_1,p_2)$ in  ($5$) in Section $3$ of the main text and $\hat{K}(p_1,p_2)$ in ($12$) in Section $5.1$ of the main text depend on $P(s,\omega)$  only through its inclusion probabilities, and $P(s,\omega)$ and $Q(s,\omega)$ have equal inclusion probabilities, it is enough to show that the results in \eqref{eq A13} in the statement of the above lemma hold for $Q(s,\omega)$. The results in \eqref{eq A13} hold for $P(s,\omega)$ in the same way as the conclusion of Proposition $3.1$ holds for $P(s,\omega)$ (see the proof of Proposition $3.1$ in Section \ref{sec 5} below). We shall first show that under $\textbf{P}^*$, 
$$
\sup_{p_1,p_2\in[\alpha,\beta]}|\hat{K}(p_1,p_2)-K(p_1,p_2)|=O_p(1)\text{ as }\nu\rightarrow\infty\text{ for }Q(s,\omega).
$$
 It can be shown in the same way as the derivation of the result in  ($33$) in the main text  that under $\textbf{P}^*$, $\{\sqrt{n}(\hat{Q}_y(p)-Q_{y}(p)): p\in[\alpha/2,(1+\beta)/2]\}$ converges weakly to a mean $0$ Gaussian process as $\nu\rightarrow\infty$  in $(D[\alpha/2,(1+\beta)/2],\mathcal{D})$ with respect to the sup norm metric,  for $Q(s,\omega)$. Consequently, 
\begin{equation}\label{eq A22}
\sup_{p\in[\alpha/2,(1+\beta)/2]}|\sqrt{n}(\hat{Q}_y(p)-Q_{y}(p))|=O_p(1)
\end{equation} 
as $\nu\rightarrow\infty$ under $\textbf{P}^*$ by continuous mapping theorem. Then, under $\textbf{P}^*$, we have 
$$
\sup_{p\in [\alpha,\beta]} |\sqrt{n}(\hat{Q}_y(p+1/\sqrt{n})-\hat{Q}_y(p-1/\sqrt{n}))/2|=O_p(1)\text{ as }\nu\rightarrow\infty\text{ for }Q(s,\omega)
$$
 since $\alpha-1/\sqrt{n}\geq\alpha/2$ and $\beta+1/\sqrt{n}\leq (1+\beta)/2$ for all sufficiently large $\nu$, and $f_y\circ Q_y$ is bounded away from $0$ on $[\alpha/2,(1+\beta)/2]$ by Assumption $3$. Here, we recall from Table \ref{table 7} in this supplement that $\sqrt{n}(\hat{Q}_{y}(p+1/\sqrt{n})-\hat{Q}_{y}(p-1/\sqrt{n}))/2$ is the estimator of $1/f_y(Q_y(p))$. Similarly, under $\textbf{P}^*$, 
$$
\sup_{p\in [\alpha,\beta]}|\sqrt{n}(\hat{Q}_{x}(p+1/\sqrt{n})-\hat{Q}_{x}(p-1/\sqrt{n}))/2|=O_p(1)\text{ as }\nu\rightarrow\infty\text{ for }Q(s,\omega).
$$
 It further follows from  ($36$) and ($42$) in the proof of Theorem $3.1$ in the main text  that under $\textbf{P}^*$, 
\begin{align*}
\begin{split}
&\sup_{p\in [\alpha,\beta]}|\hat{Q}_{y}(p)/\hat{Q}_{x}(p)-Q_y(p)/Q_x(p)|\xrightarrow{p} 0,\sum_{i\in s}\pi_i^{-1}Y_i\bigg/\sum_{i\in s}\pi_i^{-1}X_i\xrightarrow{p} E_{\textbf{P}}(Y_i)\bigg/E_{\textbf{P}}(X_i)\\
&\text{ and }\sum_{i\in s}\pi_i^{-1}X_iY_i\bigg/\sum_{i\in s}\pi_i^{-1}X_i^2\xrightarrow{p} E_{\textbf{P}}(X_iY_i)\bigg/E_{\textbf{P}}(X^2_i)\text{ as }\nu\rightarrow\infty\text{ for }Q(s,\omega).
\end{split}
\end{align*}
 Similarly, it can be shown that under $\textbf{P}^*$, 
$$
\sum_{i\in s}(1-\pi_i)\bigg/\sum_{i=1}^N\pi_i(1-\pi_i)\xrightarrow{p} 1\text{ as }\nu\rightarrow\infty\text{ for }Q(s,\omega).
$$
 Consequently, under $\textbf{P}^*$, 
$$
\sup_{p_1,p_2\in[\alpha,\beta]}|\hat{K}(p_1,p_2)-K(p_1,p_2)|=O_p(1)\text{ as }\nu\rightarrow\infty\text{ for }Q(s,\omega).
$$
 This completes the proof of the first  result in \eqref{eq A13} for $Q(s,\omega)$.
\par
\vspace{.2cm}

 Next, if we establish that under $\textbf{P}^*$, 
$$
\hat{K}(p_1,p_2)-\tilde{K}(p_1,p_2)\xrightarrow{p}0\text{ and }\tilde{K}(p_1,p_2)\xrightarrow{p} K(p_1,p_2)
$$
 as $\nu\rightarrow\infty$ for $Q(s,\omega)$ and any $p_1,p_2\in [\alpha,\beta]$, then the result 
$$
\hat{K}(p_1,p_2)\xrightarrow{p}K(p_1,p_2)\text{ as }\nu\rightarrow\infty\text{ for }Q(s,\omega)\text{ and any }p_1,p_2\in [\alpha,\beta]\text{ under }\textbf{P}^*
$$
 will follow. Here, 
$$
\tilde{K}(p_1,p_2)=(n/N^2)\sum_{i\in s}(\zeta_i(p_1)-\overline{\zeta}(p_1)-S(p_1)\pi_i)(\zeta_i(p_2)-\overline{\zeta}(p_2)-S(p_2)\pi_i)(\pi_i^{-1}-1)\pi_i^{-1}.
$$ 
 Note that 
\begin{align*}
\begin{split}
&\tilde{K}(p_1,p_2)-(n/N^2)\sum_{i=1}^N (\zeta_i(p_1)-\overline{\zeta}(p_1)-S(p_1)\pi_i)(\zeta_i(p_2)-\overline{\zeta}(p_2)-S(p_2)\pi_i)(\pi_i^{-1}-1)\\
&\xrightarrow{p} 0\text{ as }\nu\rightarrow\infty\text{ for any }p_1,p_2\in [\alpha,\beta]\text{ under }\textbf{P}^*
\end{split}
\end{align*} 
 in the same way as the derivation of the result $\sum_{i\in s} (N\pi_i)^{-1}\xrightarrow{p} 1$ for $Q(s,\omega)$ under $\textbf{P}^*$ in the proof of Proposition $3.1$ (see the last few lines in $2^{nd}$ paragraph  of the proof of Proposition $3.1$). Also, note that $(n/N^2)\sum_{i=1}^N(\zeta_i(p_1)-\overline{\zeta}(p_1)-S(p_1)\pi_i)(\zeta_i(p_2)-\overline{\zeta}(p_2)-S(p_2)\pi_i)(\pi_i^{-1}-1)$ has a deterministic limit \textit{a.s.} $[\textbf{P}]$ for any $p_1,p_2\in [\alpha,\beta]$ in view of Assumption $2$-\textsl{(i)}. Further, 
\begin{align*}
\begin{split}
&E_{\textbf{P}}\bigg(\lim_{\nu\rightarrow\infty}(n/N^2)\sum_{i=1}^N(\zeta_i(p_1)-\overline{\zeta}(p_1)-S(p_1)\pi_i)(\zeta_i(p_2)-\overline{\zeta}(p_2)-S(p_2)\pi_i)(\pi_i^{-1}-1)\bigg)\\
&=K(p_1,p_2)\text{ for any }p_1,p_2\in [\alpha,\beta] 
\end{split}
\end{align*}
 in view of Assumption $2$-\textsl{(ii)} and DCT. Therefore, as $\nu\rightarrow\infty$, 
$$
(n/N^2)\sum_{i=1}^N(\zeta_i(p_1)-\overline{\zeta}(p_1)-S(p_1)\pi_i)(\zeta_i(p_2)-\overline{\zeta}(p_2)-S(p_2)\pi_i)(\pi_i^{-1}-1)\rightarrow K(p_1,p_2)
$$
 \textit{ a.s. } $[\textbf{P}]$, and hence $\tilde{K}(p_1,p_2)\xrightarrow{p} K(p_1,p_2)$ under $\textbf{P}^*$ for any $p_1,p_2\in [\alpha,\beta]$. 
\par
\vspace{.2cm}

Next, let us fix $\nu\geq 1$, $t>0$, $\delta>0$ and $p\in [\alpha,\beta]$. Then, we have 
\begin{align}\label{eq A14}
\begin{split}
&\bigg\{\sqrt{n}|\hat{Q}_{y}(p)-Q_y(p)|\leq t\text{ and }\sum_{i\in s}(\mathds{1}_{[Y_i\leq Q_y(p)+t/\sqrt{n}]}-\mathds{1}_{[Y_i\leq Q_y(p)-t/\sqrt{n}]})/N\pi_i\leq \delta\bigg\} \\
&\subseteq \bigg\{|\sum_{i\in s}\mathds{1}_{[Y_i\leq \hat{Q}_{y}(p)]}/N\pi_i-\sum_{i\in s} \mathds{1}_{[Y_i\leq Q_y(p)]}/N\pi_i|\leq \delta\bigg\}.
\end{split}
\end{align} 
Further, one can show that under $\textbf{P}^*$, 
\begin{align*}
\begin{split}
&\sum_{i\in s}(\mathds{1}_{[Y_i\leq Q_y(p)+t/\sqrt{n}]}-\mathds{1}_{[Y_i\leq Q_y(p)-t/\sqrt{n}]})\bigg/N\pi_i-\\
&F_{y,N} (Q_y(p)+t/\sqrt{n})+F_{y,N} (Q_y(p)-t/\sqrt{n})\xrightarrow{p} 0\text{ as }\nu\rightarrow\infty
\end{split}
\end{align*}
 in the same way as the derivation of the result $\sum_{i\in s} (N\pi_i)^{-1}\xrightarrow{p} 1$ for $Q(s,\omega)$ under $\textbf{P}^*$ in the proof of Proposition $3.1$. Moreover, under $\textbf{P}$, $F_{y,N}(Q_y(p)+t/\sqrt{n})-F_{y,N}(Q_y(p)-t/\sqrt{n})\xrightarrow{p} 0$ as $\nu\rightarrow\infty$ by Chebyshev's inequality and Assumption $3$. Thus as $\nu\rightarrow\infty$
\begin{equation}\label{eq A15}
\sum_{i\in s}(\mathds{1}_{[Y_i\leq Q_y(p)+t/\sqrt{n}]}-\mathds{1}_{[Y_i\leq Q_y(p)-t/\sqrt{n}]})/N\pi_i\xrightarrow{p} 0 \text{ under }\textbf{P}^{*}.
\end{equation}
Moreover, it follows from \eqref{eq A22} above that as $\nu\rightarrow\infty$, 
\begin{equation}\label{eq A16}
\sqrt{n}|\hat{Q}_{y}(p)-Q_y(p)|=O_p(1)\text{ under }\textbf{P}^{*}.
\end{equation}
Therefore, using \eqref{eq A14}, \eqref{eq A15} and \eqref{eq A16} above, one can show that 
$$
\sum_{i\in s}\mathds{1}_{[Y_i\leq \hat{Q}_{y}(p)]}\bigg/N\pi_i-\sum_{i\in s}\mathds{1}_{[Y_i\leq Q_y(p)]}\bigg/N\pi_i\xrightarrow{p} 0\text{ as }\nu\rightarrow\infty\text{ under }\textbf{P}^*.
$$
 Now, suppose that $p_n$=$p+c/\sqrt{n}$ for $c\in\mathds{R}$. Then, we have 
$$
Q_y(p_n)=Q_y(p)+(c/\sqrt{n})(1/f_y(Q_y(\epsilon_n)))
$$
 by Taylor expansion, where $\epsilon_n\rightarrow p$ as $\nu\rightarrow\infty$. Thus one can show that as $\nu\rightarrow\infty$, 
$$
\sqrt{n}(\hat{F}_{y}(Q_y(p_n))-\hat{F}_{y}(Q_y(p))-F_y(Q_y(p_n))+p)\xrightarrow{p} 0\text{ under }\textbf{P}^*
$$
 in the same way as the derivation of the result $\sum_{i\in s} (N\pi_i)^{-1}\xrightarrow{p} 1$ for $Q(s,\omega)$ under $\textbf{P}^*$ in the proof of Proposition $3.1$. Further, it can be shown that  
$$
\hat{Q}_{y}(p)-Q_y(p)=(p-\hat{F}_{y}(Q_y(p)))/f_y(Q_y(p))+o_p(1/\sqrt{n})\text{ as }\nu\rightarrow\infty\text{ under }\textbf{P}^*.
$$
 Similarly, we have 
$$
\hat{Q}_{y}(p_n)-Q_y(p_n)=(p_n-\hat{F}_{y}(Q_y(p_n)))/f_y(Q_y(p_n))+o_p(1/\sqrt{n})\text{ as }\nu\rightarrow\infty\text{ under }\textbf{P}^*.
$$
 Therefore, 
 $$
 \sqrt{n}(\hat{Q}_{y}(p+1/\sqrt{n})-\hat{Q}_{y}(p-1/\sqrt{n}))/2\xrightarrow{p} 1/f_y(Q_y(p))\text{ as }\nu\rightarrow\infty\text{ under }\textbf{P}^*.
 $$ 
  Similarly, 
\begin{align*}
\begin{split}
 &\sum_{i\in s}\mathds{1}_{[X_i\leq \hat{Q}_{x}(p)]}/N\pi_i-\sum_{i\in s}\mathds{1}_{[X_i\leq Q_x(p)]}/N\pi_i\xrightarrow{p} 0\text{ and }\\
 &\sqrt{n}(\hat{Q}_{x}(p+1/\sqrt{n})-\hat{Q}_{x}(p-1/\sqrt{n}))/2\xrightarrow{p} 1/f_x(Q_x(p))\text{ as }\nu\rightarrow\infty\text{ under }\textbf{P}^*. 
 \end{split}
\end{align*} 
 Hence, under $\textbf{P}^*$, $\hat{K}(p_1,p_2)-\tilde{K}(p_1,p_2)\xrightarrow{p}0$ as $\nu\rightarrow\infty$ for $Q(s,\omega)$ and any $p_1,p_2\in [\alpha,\beta]$. This completes the proof of \textsl{(i)}. The proof of \textsl{(ii)} follows exactly the same way as the proof of \textsl{(i)}.
\end{proof} 
\section{Additional results required for proving the main results for stratified multistage cluster sampling design} Suppose that $P(s,\omega)$ denotes the stratified multistage cluster sampling design with SRSWOR mentioned in Section $4$ of the main text. Fix $k\geq 1$ and $p_1,\ldots,p_k\in (0,1)$. Recall $\textbf{G}_{hjl}^{\prime}$ from the paragraph preceding Assumption $10$ in the main text. Define 
$$
\textbf{V}_{hjl}^{\prime}=\textbf{G}_{hjl}^{\prime}-\overline{\textbf{G}}^{\prime}\text{ and }\hat{\overline{\textbf{V}}}_3=\sum_{h=1}^H \sum_{j\in s_h}\sum_{l\in s_{hj}} M_h N_{hj}\textbf{V}_{hjl}^{\prime}/m_h r_h N
$$ 
 for $h$=$1,\ldots,H$, $j$=$1,\ldots,M_h$ and $l$=$1,\ldots,N_{hj}$, where $\overline{\textbf{G}}^{\prime}$=$\sum_{h=1}^H\sum_{j=1}^{M_h}\sum_{l=1}^{N_{hj}} \textbf{G}_{hjl}^{\prime}/N$. Now, we state the following lemma.
\begin{lemma}\label{lem s8}
\textsl{(i)} Fix $\textbf{b}\in \mathds{R}^{2k}$ such that $\textbf{b}\neq 0$. Suppose that $H$ is fixed as $\nu\rightarrow\infty$, and Assumptions $1$ and $6$-$8$ in the main text hold. Then, under $P(s,\omega)$, 
$$
\sqrt{n}\textbf{b}\hat{\overline{\textbf{V}}}^T_3\xrightarrow{\mathcal{L}}N(0,\lambda\textbf{b}\Gamma_7 \textbf{b}^T)\text{ as }\nu\rightarrow\infty \textit{ a.s. }[\textbf{P}] 
$$
 for some p.d. matrix $\Gamma_7$, where $\lambda$ is as in Assumption $1$. \\
\textsl{(ii)} Further, if $H\rightarrow\infty$ as $\nu\rightarrow\infty$, and Assumptions $1$ and $8$--$10$ in the main text hold, then the same result holds. 
\end{lemma}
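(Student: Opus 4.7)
The plan is to apply the Cram\'er--Wold device to reduce to a scalar sum, exploit the across-strata independence inherent in the stratified design, and then invoke an appropriate CLT in each of the two regimes.

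Fix $\textbf{b}\in\mathds{R}^{2k}$ and decompose
\[
W_\nu \;:=\; \sqrt{n}\,\textbf{b}\,\hat{\overline{\textbf{V}}}_3^T \;=\; \sum_{h=1}^H W_{h,\nu},\qquad W_{h,\nu}\;=\;\frac{\sqrt{n}\,M_h}{N m_h}\sum_{j\in s_h}\frac{N_{hj}}{r_h}\sum_{l\in s_{hj}}\textbf{b}(\textbf{V}_{hjl}^{\prime})^T.
\]
Since the two-stage SRSWOR is performed independently across strata, $W_{1,\nu},\ldots,W_{H,\nu}$ are mutually independent given the superpopulation; and since $\sum_{h,j,l}\textbf{V}_{hjl}^{\prime}=0$, each $W_{h,\nu}$ has design mean zero, with variance that splits into the usual two-stage SRSWOR ANOVA between- and within-cluster pieces inside stratum $h$.

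For part \textsl{(i)}, with $H$ fixed as $\nu\rightarrow\infty$, I would prove asymptotic normality of each $W_{h,\nu}$ separately via the classical CLT for two-stage SRSWOR, exactly as in the single-population computation in the proof of Lemma S$4$ of \cite{deychaudhuri2023}: the SRSWOR CLT on cluster totals handles the between-cluster part, the conditional SRSWOR CLT given $s_h$ handles the within-cluster part, and the two combine because the within-cluster errors are conditionally independent across sampled clusters. Assumptions $1$, $6$ and $7$ yield the limits of $m_h/M_h$, $r_h/N_{hj}$ and the normalized cluster sizes, while Assumption $8$ supplies the required superpopulation moments and positive definiteness; summing over a fixed number $H$ of independent limits then yields the $N(0,\lambda\,\textbf{b}\Gamma_7\textbf{b}^T)$ conclusion.

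For part \textsl{(ii)}, with $H\rightarrow\infty$, the finite-sum argument is unavailable, so I would invoke the Lindeberg--Feller CLT on the triangular array $\{W_{h,\nu}:1\leq h\leq H\}$. Assumption $10$, whose limits $\Theta_1,\ldots,\Theta_4$ are precisely the cross-strata averages of the second-moment quantities appearing in the ANOVA decomposition, delivers $\sum_h\mathrm{Var}(W_{h,\nu})\to\lambda\,\textbf{b}\Gamma_7\textbf{b}^T$ \textit{a.s.} $[\textbf{P}]$. The Lindeberg truncation condition is then reduced, via Chebyshev's inequality, to uniform-in-$h$ bounds on fourth moments of $W_{h,\nu}$, which follow from Assumptions $8$ and $9$ after expanding the fourth moment by the same four-index ANOVA combinatorics used in Lemmas \ref{lem 2} and \ref{lem 5}. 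The main obstacle is precisely this Lindeberg verification: the per-unit weights $M_hN_{hj}/(Nm_hr_h)$ need not be uniformly bounded in $h$, so Assumptions $8$--$10$ have to be combined carefully to show that $\max_h |W_{h,\nu}|/\sqrt{\mathrm{Var}(W_\nu)}$ is asymptotically negligible. Once normality of $W_\nu$ is established in both regimes, Cram\'er--Wold delivers the stated joint limit for $\sqrt{n}\,\textbf{b}\hat{\overline{\textbf{V}}}_3^T$.
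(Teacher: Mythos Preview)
Your high-level architecture matches the paper's: the same stratum decomposition $W_\nu=\sum_h W_{h,\nu}$ with independent, design-centered summands, a per-stratum CLT when $H$ is fixed, and a triangular-array CLT when $H\to\infty$. The differences are in execution, and two of them are worth flagging.

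For part \textsl{(i)}, your reference to Lemma~S4 of \cite{deychaudhuri2023} is misplaced: that lemma treats single-stage designs only, so it does not by itself tell you how to glue the first-stage SRSWOR on clusters to the conditional second-stage SRSWOR within sampled clusters. The paper avoids building this from scratch by invoking Theorem~2.1 of \cite{MR983089}, a ready-made two-stage CLT whose conditions C1--C3 it verifies explicitly: C1 is the first-stage CLT for the cluster totals $\Theta_h=\sum_{j\in s_h}\sum_{l=1}^{N_{hj}}\textbf{V}_{hjl}^{\prime}\textbf{b}^T/\sqrt{m_h}$ (this part does follow the single-stage argument), C2 is a ratio-of-moments condition on the within-cluster sums of squares, and C3 is automatic for SRSWOR by Proposition~2.3 of \cite{MR983089}. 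Your ``between plus conditional within'' sketch is the right intuition, but the formal combination step is exactly what Ohlsson's theorem packages, and the paper computes the variances via Hoeffding's inequality rather than by direct ANOVA bookkeeping.

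For part \textsl{(ii)}, your Lindeberg-via-fourth-moments route would work but is more laborious than what the paper does. Assumption~9 gives a deterministic, uniform-in-$s$ bound $|\mathcal{T}_h|\le\epsilon(\nu)\,m_h/\sqrt{n}$ with $\epsilon(\nu)=O(1)$, from which $\sum_h E|\mathcal{T}_h|^{2+\delta}=O(n^{-\delta/2})$ follows by a one-line count; this is a Lyapunov condition, no moment combinatorics needed. Your worry that the weights $M_hN_{hj}/(Nm_hr_h)$ may not be uniformly bounded is precisely what Assumption~9 rules out. Finally, your description of Assumption~10 is slightly off: the $\Theta_1,\ldots,\Theta_4$ are limits of cross-strata averages of $E_\textbf{P}(X_{hjl}')$, $E_\textbf{P}(Y_{hjl}')$, $E_\textbf{P}(X_{hjl}'Y_{hjl}')$, $E_\textbf{P}(X_{hjl}')^2$, not ANOVA variance components; the part of Assumption~10 that drives the variance limit $\sum_h\mathrm{var}(\mathcal{T}_h)\to\lambda\textbf{b}\Gamma_7\textbf{b}^T$ is the existence of the p.d.\ limit matrix $\Gamma_1$ for $\sum_h N_h(N_h-n_h)\Gamma_h/(n_hN)$.
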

\begin{proof}
Note that 
$$
\sqrt{n}\textbf{b}\hat{\overline{\textbf{V}}}^T_3=\sqrt{n}\sum_{h=1}^H \sum_{j\in s_h}\sum_{l\in s_{hj}}M_h N_{hj}\textbf{V}_{hjl}^{\prime}\textbf{b}^T/m_h r_h N=\sum_{h=1}^H\mathcal{T}_h\text{ (say)}.
$$
\vspace{.2cm}

 \textsl{(i)} We shall first show that $\mathcal{T}_h$=$\sqrt{n} \sum_{j\in s_h}\sum_{l\in s_{hj}}$ $M_h N_{hj}\textbf{V}_{hjl}^{\prime}\textbf{b}^T/m_h r_h N$ is asymptotically normal under two stage cluster sampling design with SRSWOR for each $h$=$1,\ldots, H$. Then, the asymptotic normality of $\sum_{h=1}^H \mathcal{T}_h$ follows from the independence of $\{\mathcal{T}_h\}_{h=1}^H$. For establishing the asymptotic normality of $\mathcal{T}_h$, we will use Theorem $2.1$ in \cite{MR983089}. 
\par
\vspace{.1cm}

Let $\Theta_h$=$\sum_{j\in s_h}\sum_{l=1}^{N_{hj}}\textbf{V}_{hjl}^{\prime}\textbf{b}^T/\sqrt{m_h}$ for $h$=$1,\ldots,H$. Note that $\Theta_h/\sqrt{m_h}$ is the HT estimator of $\sum_{j=1}^{M_h}\sum_{l=1}^{N_{hj}}\textbf{V}_{hjl}^{\prime}\textbf{b}^T/M_h$ under SRSWOR. Also, note that Assumption $2$-\textsl{(ii)} holds trivially under SRSWOR. It follows from Assumptions $1$ and $6$ that $\sum_{j=1}^{M_h}|\sum_{l=1}^{N_{hj}}\textbf{V}_{hjl}^{\prime}\textbf{b}^T|^{2+\delta} /M_h$ =$O(1)$ as $\nu\rightarrow\infty$ for any $0<\delta\leq 2$ and $\omega\in\Omega$. 
\par
\vspace{.1cm}

Now, it can be shown that $var(\Theta_h)$=$\sigma^2_{h,1}-\sigma^2_{h,2}+\sigma^2_{h,3}$. Here, 
\begin{align*}
\begin{split}
&\sigma^2_{h,1}=(1-f_h)\sum_{j=1}^{M_h}N_{hj}^2\big((\overline{\textbf{G}}_{hj}^{\prime}-\overline{\textbf{G}}^{\prime})\textbf{b}^T\big)^2/(M_h-1),\\
&\sigma^2_{h,2}=2(1-f_h)N_h\big((\overline{\textbf{G}}_{h}^{\prime}-\overline{\textbf{G}}^{\prime})\textbf{b}^T\big)\sum_{j=1}^{M_h}N_{hj}\big((\overline{\textbf{G}}_{hj}^{\prime}-\overline{\textbf{G}}^{\prime})\textbf{b}^T\big)/M_h(M_h-1)\\
&\text{and }\sigma^2_{h,3}=(1-f_h)N^2_h\big((\overline{\textbf{G}}_{h}^{\prime}-\overline{\textbf{G}}^{\prime})\textbf{b}^T\big)^2/M_h(M_h-1)
\end{split}
\end{align*}
 with $f_h$=$m_h/M_h$, $\overline{\textbf{G}}_{hj}^{\prime}$=$\sum_{l=1}^{N_{hj}}\textbf{G}_{hjl}^{\prime}/N_{hj}$ and $\overline{\textbf{G}}_h^{\prime}$=$\sum_{j=1}^{M_h}$ $\sum_{l=1}^{N_{hj}}\textbf{G}_{hjl}^{\prime}$ $/N_h$. Next, we note that 
\begin{align}\label{eq 1}
\begin{split}
&\sigma^2_{h,1}=(1-f_h)\bigg(\sum_{j=1}^{M_h}N_{hj}^2\big(\overline{\textbf{G}}_{hj}^{\prime}\textbf{b}^T\big)^2-2\big(\overline{\textbf{G}}^{\prime}\textbf{b}^T\big)\sum_{j=1}^{M_h}N_{hj}^2\big(\overline{\textbf{G}}_{hj}^{\prime}\textbf{b}^T\big)+\\
&\tilde{N}_h(\overline{\textbf{G}}^{\prime}\textbf{b}^T\big)^2\bigg)\bigg/(M_h-1),
\end{split}
\end{align} 
where $\tilde{N}_h$=$\sum_{j=1}^{M_h}N_{hj}^2$. Let us consider the first term on the right hand side of \eqref{eq 1}. Using Assumptions $1$ and $6$, and Hoeffding's inequality, it can be shown that 
$$
(1-f_h)\sum_{j=1}^{M_h}N_{hj}^2 \big((\overline{\textbf{G}}_{hj}^{\prime}\textbf{b}^T)^2-E_{\textbf{P}}(\overline{\textbf{G}}_{hj}^{\prime}\textbf{b}^T)^2\big)/(M_h-1)\rightarrow 0\text{ as }\nu\rightarrow\infty \textit{ a.s. } [\textbf{P}]. 
$$
 Further, we have
$$
(1-f_h)\sum_{j=1}^{M_h}N_{hj}^2 E_{\textbf{P}} (\overline{\textbf{G}}_{hj}^{\prime}\textbf{b}^T)^2/(M_h-1)=(1-f_h)(N_h\tilde{\sigma}^2_h+\tilde{N}_h\mu_h^2)/(M_h-1),
$$ 
where $\tilde{\sigma}^2_h$=$E_{\textbf{P}}\big[\big(\textbf{G}_{hjl}^{\prime}-E_{\textbf{P}}(\textbf{G}_{hjl}^{\prime})\big)\textbf{b}^T]^2$=$\textbf{b}\Gamma_h\textbf{b}^T$ (recall $\Gamma_h$ from the paragraph preceding Assumption $10$) and $\mu_h$=$E_{\textbf{P}}$ $(\textbf{G}_{hjl}^{\prime}\textbf{b}^T)$. Thus 
\begin{equation}\label{eq 2}
(1-f_h)\sum_{j=1}^{M_h}N_{hj}^2 (\overline{\textbf{G}}_{hj}^{\prime}\textbf{b}^T)^2/(M_h-1)=(1-f_h)(N_h\tilde{\sigma}^2_h+\tilde{N}_h\mu_h^2)/(M_h-1)+o(1)
\end{equation}
as $\nu\rightarrow\infty$ \textit{a.s.} $[\textbf{P}]$. Using similar arguments, we can say that 
\begin{align*}
\begin{split}
&\sigma^2_{h,1}=(1-f_h)(N_h\tilde{\sigma}^2_h+\tilde{N}_h(\mu_h-\tilde{\mu})^2)/(M_h-1)+o(1),\\
&\sigma^2_{h,2}=2(1-f_h)N_h^2(\mu_h-\tilde{\mu})^2/M_h(M_h-1)+o(1)\text{ and }\\
&\sigma^2_{h,3}=(1-f_h)N_h^2(\mu_h-\tilde{\mu})^2/M_h(M_h-1)+o(1)\text{ as }\nu\rightarrow\infty\textit{ a.s. } [\textbf{P}],
\end{split}
\end{align*}
 where $\tilde{\mu}$=$\sum_{h=1}^H\Lambda_h\mu_h$ (recall $\Lambda_h$'s from Assumption $6$). Then, we have 
\begin{equation}\label{eq 8}
var(\Theta_h)=(1-f_h)N_h\tilde{\sigma}^2_h/(M_h-1)+o(1)
\end{equation} 
as $\nu\rightarrow\infty$ \textit{a.s.} $[\textbf{P}]$ by Assumption $6$.  
\par
\vspace{.2cm}

Next, recall $F_{y,H}(t)$ and $F_{x,H}(t)$ from the paragraph preceding Assumption $10$. It can be shown that 
\begin{equation}\label{eq 26}
\sup_{t\in \mathds{R}}|F_{y,H}(t)-\tilde{F}_{y,H}(t)|\rightarrow 0\text{ and }\sup_{t\in \mathds{R}}|F_{x,H}(t)-\tilde{F}_{x,H}(t)|\rightarrow 0\text{ as }\nu\rightarrow \infty
\end{equation}
 by Assumption $6$, where $\tilde{F}_{y,H}(t)$=$\sum_{h=1}^H\Lambda_h F_{y,h}(t)$ and $\tilde{F}_{x,H}(t)$=$\sum_{h=1}^H\Lambda_h F_{x,h}(t)$. Then, it follows from Lemma \ref{lem s10} that 
\begin{equation}\label{eq 27}
Q_{y,H}(p_r)\rightarrow\tilde{Q}_{y,H}(p_r)\text{ as }\nu\rightarrow\infty\text{ for any }r=1,\ldots,k,
\end{equation}
 where $\tilde{Q}_{y,H}(p)$=$\inf\{t\in\mathds{R}:\tilde{F}_{y,H}(t)\geq p\}$. Similarly, 
\begin{equation}\label{eq 28}
Q_{x,H}(p_r)\rightarrow\tilde{Q}_{x,H}(p_r)\text{ as }\nu\rightarrow\infty\text{ for any }r=1,\ldots,k, 
\end{equation} 
 where $\tilde{Q}_{x,H}(p)$=$\inf\{t\in\mathds{R}:\tilde{F}_{x,H}(t)\geq p\}$. Let 
$$
\tilde{\textbf{G}}_{hjl}=\bigg(\mathds{1}_{[Y_{hjl}^{\prime}\leq \tilde{Q}_{y,H}(p_1)]},\ldots, \mathds{1}_{[Y_{hjl}^{\prime}\leq \tilde{Q}_{y,H}(p_k)]},\mathds{1}_{[X_{hjl}^{\prime}\leq \tilde{Q}_{x,H}(p_1)]}, \ldots,\mathds{1}_{[X_{hjl}^{\prime}\leq \tilde{Q}_{x,H}(p_k)]}\bigg),
$$
 where $(Y^{\prime}_{hjl},X^{\prime}_{hjl})$ is as in the $2^{nd}$ paragraph of Section $4$ in the main text. Then, 
$$
\tilde{\sigma}^2_h=\textbf{b}\Gamma_h \textbf{b}^T \rightarrow \textbf{b} E_{\textbf{P}}\big(\tilde{\textbf{G}}_{hjl}-E_{\textbf{P}}(\tilde{\textbf{G}}_{hjl}))^T(\tilde{\textbf{G}}_{hjl}-E_{\textbf{P}}(\tilde{\textbf{G}}_{hjl})\big)\textbf{b}^T
$$
 as $\nu\rightarrow\infty$ for any $h$=$1,\ldots, H$ in view of Assumption $8$. Moreover, $E_{\textbf{P}}\big(\tilde{\textbf{G}}_{hjl}-E_{\textbf{P}}(\tilde{\textbf{G}}_{hjl}))^T(\tilde{\textbf{G}}_{hjl}-E_{\textbf{P}}(\tilde{\textbf{G}}_{hjl})\big)$ is a p.d. matrix because Assumption $7$ holds. Therefore, 
$$
\underline{\lim}_{\nu\rightarrow\infty}((M_h-1)/M_h)var(\Theta_h)>0\textit{ a.s. } [\textbf{P}]
$$
 by \eqref{eq 8} above and Assumption $6$ in the main text. Hence, one can show that 
$$
(\Theta_h-E(\Theta_h))/\sqrt{var(\Theta_h)}\xrightarrow{\mathcal{L}}N(0,1)\text{ as }\nu\rightarrow\infty\text{ under SRSWOR }\textit{ a.s. } [\textbf{P}]
$$
in the same way as the derivation of the result, which appears in the proof of Lemma S$4$ in \cite{deychaudhuri2023}, that $\sqrt{n}\textbf{m}(\hat{\overline{\textbf{V}}}_1-\overline{\textbf{V}})^T\xrightarrow{\mathcal{L}}N(0,\textbf{m} \Gamma_1 \textbf{m}^T)$ as $\nu\rightarrow\infty$ under SRSWOR for any $\textbf{m}\in\mathbb{R}^{p}$, $\textbf{m}\neq 0$ and $\Gamma_1$=$\lim_{\nu\rightarrow\infty}\Sigma_1$.  Thus the condition C$1$ of Theorem $2.1$ in \cite{MR983089} holds \textit{a.s.} $[\textbf{P}]$.
\par
\vspace{.1cm}

Next, suppose that $\overline{\textbf{V}}_{hj}^{\prime}$=$\sum_{l=1}^{N_{hj}}\textbf{V}_{hjl}^{\prime}/N_{hj}$. Note that for any $h$=$1,\ldots, H$, $\sum_{l=1}^{N_{hj}}\big((\textbf{V}_{hjl}^{\prime}-\overline{\textbf{V}}_{hj}^{\prime})\textbf{b}^T\big)^2/N_{hj}$ are independent bounded random variables for $1\leq j\leq M_h$. Then, by Assumptions $1$ and $6$, and Hoeffding's inequality, we have 
$$
\sum_{j=1}^{M_h}(N_{hj}^2/r_h)(1/m_h)\bigg[\sum_{l=1}^{N_{hj}}\big((\textbf{V}_{hjl}^{\prime}-\overline{\textbf{V}}_{hj}^{\prime})\textbf{b}^T\big)^2/N_{hj}\bigg]=(1/r_h m_h)\sum_{j=1}^{M_h} N_{hj}(N_{hj}-1)\tilde{\sigma}^2_h+o(1)
$$
 as $\nu\rightarrow\infty$ \textit{a.s.} $[\textbf{P}]$. Thus 

$$
\underline{\lim}_{\nu\rightarrow\infty}\big[\sum_{j=1}^{M_h} (N_{hj}^2/r_h)(1/m_h)\big\{\sum_{l=1}^{N_{hj}}\big((\textbf{V}_{hjl}^{\prime}-\overline{\textbf{V}}_{hj}^{\prime})\textbf{b}^T\big)^2/N_{hj}\big\}\big]^2>0
$$
 \textit{a.s.} $[\textbf{P}]$. Further, in view of Assumption $6$, we have 
$$ 
\bigg[\sum_{j=1}^{M_h}(N_{hj}^4/r_h^2) (1/M_h)\bigg\{\sum_{l=1}^{N_{hj}}\big((\textbf{V}_{hjl}^{\prime}-\overline{\textbf{V}}_{hj}^{\prime}\big)\textbf{b}^T\big)^2/N_{hj}\bigg\}^2\bigg]\leq K
$$ 
 for all sufficiently large $\nu$ and some constant $K>0$ \textit{a.s.} $[\textbf{P}]$. Therefore, 
\begin{align*}
&\lim_{\nu\rightarrow\infty}\bigg[\sum_{j=1}^{M_h}(N_{hj}^4/r_h^2)(M_h/m_h)^3\bigg\{\sum_{l=1}^{N_{hj}}\big((\textbf{V}_{hjl}^{\prime}-\overline{\textbf{V}}_{hj}^{\prime})\textbf{b}^T\big)^2\big/N_{hj}\bigg\}^2\bigg]\bigg/\\
&\bigg[\sum_{j=1}^{M_h}(N_{hj}^2/r_h)(M_h/m_h)\bigg\{\sum_{l=1}^{N_{hj}}\bigg((\textbf{V}_{hjl}^{\prime}-\overline{\textbf{V}}_{hj}^{\prime})\textbf{b}^T\big)^2 \big/N_{hj}\bigg\}\bigg]^2=0 
\end{align*}
\textit{a.s.} $[\textbf{P}]$ by Assumption $6$. Thus the condition C$2$ of Theorem $2.1$ in \cite{MR983089} holds \textit{a.s.} $[\textbf{P}]$ by Assumption $6$ in the main text and Proposition $4.1$ in \cite{MR983089}. 
\par
\vspace{.1cm}

The condition C$3$ of Theorem $2.1$ in \cite{MR983089} holds for any $\omega\in\Omega$ by $(b)$ of Proposition $2.3$ in \cite{MR983089} since SRSWOR is used to select samples from clusters in the $1^{st}$ stage and from population units of the selected clusters in the $2^{nd}$ stage. Therefore, the conditions C$1$, C$2$ and C$3$ of Theorem $2.1$ in \cite{MR983089} hold \textit{a.s.} $[\textbf{P}]$. Hence, by Theorem $2.1$ in \cite{MR983089}, we have  
\begin{equation}\label{eq 11}
(\sqrt{n_h}/N_h)(N/\sqrt{n})(\mathcal{T}_h-E(\mathcal{T}_h))/(var((\sqrt{n_h}/N_h)(N/\sqrt{n})\mathcal{T}_h))^{1/2}\xrightarrow{\mathcal{L}}N(0,1)
\end{equation} 
as $\nu\rightarrow\infty$ under two stage cluster sampling design with SRSWOR \textit{a.s.} $[\textbf{P}]$ for any $h$=$1,\ldots,H$. Now, 
\begin{align*}
\begin{split}
&var((\sqrt{n_h}/N_h)(N/\sqrt{n})\mathcal{T}_h)=\sum_{j=1}^{M_h}\tilde{c}_{hj}\big((\overline{\textbf{G}}_{hj}^{\prime}-\overline{\textbf{G}}^{\prime})\textbf{b}^T\big)^2- \tilde{c}_h \big(N_h(\overline{\textbf{G}}_h^{\prime}-\overline{\textbf{G}}^{\prime})\textbf{b}^T/M_h\big)^2+\\
&\sum_{j=1}^{M_h}\tilde{d}_{hj}\sum_{l=1}^{N_{hj}}\big((\textbf{G}_{hjl}^{\prime}-\overline{\textbf{G}}_{hj}^{\prime})\textbf{b}^T\big)^2,
\end{split}
\end{align*}
 where 
$$
\tilde{c}_{hj}=(N/N_h)^2 (n_h/n)c_{hj},  \tilde{d}_{hj}=(N/N_h)^2(n_h/n)d_{hj},\text{ and }\tilde{c}_h=(N/N_h)^2(n_h/n)c_h.
$$ 
 Here,
\begin{align*}
\begin{split}
&c_{hj}=c_h N_{hj}^2/M_h, d_{hj}=nM_h(1-f_{hj})N_{hj}^2/m_h r_h(N_{hj}-1)N^2,\\
&c_h=nM_h^3(1-f_h)/m_h(M_h-1)N^2,  f_h=m_h/M_h,\text{ and }f_{hj}=r_h/ N_{hj}. 
\end{split}
\end{align*} 
 It can be shown using Hoeffding's inequality that 
$$
var((\sqrt{n_h}/ N_h)(N/\sqrt{n})\mathcal{T}_h)=(1-n_h/N_h)\tilde{\sigma}^2_h+o(1)\text{ as }\nu\rightarrow\infty \textit{ a.s. } [\textbf{P}].
$$
 Therefore, using \eqref{eq 11} above and Assumption $6$ in the main text, it can be shown that 
$$
\sum_{h=1}^H \mathcal{T}_h=\sum_{h=1}^H(\mathcal{T}_h-E(\mathcal{T}_h))\xrightarrow{\mathcal{L}}N(0,\Delta^2)\text{ as }\nu\rightarrow\infty \textit{ a.s. } [\textbf{P}].
$$
 Here, 

\begin{align*}
\begin{split}
&\Delta^2=\lim_{\nu\rightarrow\infty}\sum_{h=1}^H n N_h(N_h-n_h)\tilde{\sigma}^2_h/n_h N^2=\lim_{\nu\rightarrow\infty}\sum_{h=1}^H  n N_h(N_h-n_h)\textbf{b}\Gamma_h\textbf{b}^T/n_h N^2\\
&= \lambda\sum_{h=1}^H\Lambda_h(\Lambda_h/\lambda\lambda_h -1)E_{\textbf{P}}\big( \tilde{\textbf{G}}_{hjl} \textbf{b}^T-E_{\textbf{P}}(\tilde{\textbf{G}}_{hjl}\textbf{b}^T)\big)^2=\lambda \textbf{b}\Gamma_7 \textbf{b}^T>0
\end{split}
\end{align*}
 with $\Gamma_7$=$\sum_{h=1}^H\Lambda_h(\Lambda_h/$ $\lambda\lambda_h-1)E_{\textbf{P}}\big(\tilde{\textbf{G}}_{hjl}-E_{\textbf{P}}(\tilde{\textbf{G}}_{hjl}))^T$ $(\tilde{\textbf{G}}_{hjl}-E_{\textbf{P}}(\tilde{\textbf{G}}_{hjl})\big)$. This completes the proof of \textsl{(i)}.
\vspace{.3cm}

\textsl{(ii)} Since, population units are sampled independently across the strata in $P(s,\omega)$, asymptotic normality of $\sum_{h=1}^H\mathcal{T}_h$ under $P(s,\omega)$ follows by applying Lyapunov's CLT to independent random variables $\{\mathcal{T}_h\}_{h=1}^H$. Note that  for any $\delta>0$, we have
$$
|\mathcal{T}_h|^{2+\delta}\leq \mathcal{\epsilon}(\nu) (m_h/\sqrt{n})^{2+\delta}
$$
 by Assumption $9$, where $\mathcal{\epsilon}(\nu)$ does not depend on $s$ and $\omega$, and $\mathcal{\epsilon}(\nu)$=$O(1)$ as $\nu\rightarrow\infty$. Therefore, under $P(s,\omega)$, 
$$
\sum_{h=1}^H E|\mathcal{T}_h|^{2+\delta}\leq\epsilon(\nu) (H/n^{1+\delta/2}) \sum_{h=1}^H M_h^{2+\delta}/H = O(n^{-\delta/2})
$$
as $\nu\rightarrow\infty$ for any $0<\delta\leq 2$ and $\omega\in\Omega$.  Hence, under $P(s,\omega)$, $\sum_{h=1}^H E|\mathcal{T}_h-E(\mathcal{T}_h)|^{2+\delta}\rightarrow 0$ as $\nu\rightarrow\infty$ for any $0<\delta\leq 2$ and $\omega\in\Omega$. 
\par
\vspace{.1cm}

Next, we have 
\begin{align*}
\begin{split}
&\sum_{h=1}^H var(\mathcal{T}_h)=\sum_{h=1}^H\sum_{j=1}^{M_h}c_{hj}\big((\overline{\textbf{G}}_{hj}^{\prime}-\overline{\textbf{G}}^{\prime})\textbf{b}^T\big)^2-\sum_{h=1}^H c_h \big(N_h(\overline{\textbf{G}}_h^{\prime}-\overline{\textbf{G}}^{\prime})\textbf{b}^T/M_h\big)^2\\
&+\sum_{h=1}^H\sum_{j=1}^{M_h}d_{hj}\sum_{l=1}^{N_{hj}}
\big((\textbf{G}_{hjl}^{\prime}-\overline{\textbf{G}}_{hj}^{\prime})\textbf{b}^T\big)^2=\Delta^2_1-\Delta^2_2+\Delta^2_3\text{ (say).}
\end{split}
\end{align*}
 Now, it can be shown using Assumptions $1$ and $9$, and Hoeffding's inequality that 
\begin{equation}\label{eq 3}
\begin{split}
&\Delta^2_1-\Delta^2_2+\Delta^2_3=\sum_{h=1}^H c_h(\tilde{N}_h-N_h^2/M_h)(\mu_h-\mu^{*})^2/M_h+\sum_{h=1}^H nM_h\tilde{N}_h\tilde{\sigma}^2_h/m_h r_h N^2\\
&-\sum_{h=1}^H nN_h\tilde{\sigma}^2_h/N^2+o(1)\text{ as }\nu\rightarrow\infty\text{ \textit{a.s.} }[\textbf{P}], 
\end{split}
\end{equation}
in the same way as the derivation of the result in \eqref{eq 2}. Here, $\mu^{*}$=$\sum_{h=1}^H$ $N_h\mu_h/N$. The first term on the right hand side of \eqref{eq 3} converges to $0$ as $\nu\rightarrow\infty$ by Assumptions $1$ and $9$. Moreover, we have
\begin{equation}\label{eq 6}
\begin{split}
&\sum_{h=1}^H nM_h\tilde{N}_h\tilde{\sigma}^2_h/m_h r_h N^2-\sum_{h=1}^H nN_h\tilde{\sigma}^2_h/N^2\\
&=(n/N^2)\sum_{h=1}^H M_h(\tilde{N}_h-N_h^2/M_h)\tilde{\sigma}^2_h/m_h r_h +(n/N^2)\sum_{h=1}^H N_h(N_h-n_h)\tilde{\sigma}^2_h/n_h.
\end{split}
\end{equation} 
The first term on the right hand side of \eqref{eq 6} converges to $0$ and 
$$
(n/N^2)\sum_{h=1}^H N_h(N_h-n_h)\tilde{\sigma}^2_h/n_h=\lambda\sum_{h=1}^H N_h(N_h-n_h)\tilde{\sigma}^2_h/ n_h N+o(1)\text{ as }\nu\rightarrow\infty
$$
 by Assumption $9$. Therefore, 
$$
\Delta_1^2-\Delta_2^2+\Delta_3^2=\lambda\sum_{h=1}^H N_h(N_h-n_h)\tilde{\sigma}^2_h/n_h N+o(1),
$$ 
 and hence 
$$
\sum_{h=1}^H var(\mathcal{T}_h)=\Delta_1^2-\Delta_2^2+\Delta_3^2\rightarrow \lambda\textbf{b}\Gamma_1\textbf{b}^T>0
$$
 as $\nu\rightarrow\infty$ \textit{a.s.} $[\textbf{P}]$ for some p.d. matrix $\Gamma_1$ in view of Assumption $10$. Here, $\Gamma_1$ is as in Assumption $10$. Thus the Lyapunov's condition $\sum_{h=1}^H E|\mathcal{T}_h-E(\mathcal{T}_h)|^{2+\delta}/(\sum_{h=1}^H var(\mathcal{T}_h))^{1+\delta/2}\rightarrow 0$ as $\nu\rightarrow \infty$ for some $\delta>0$, holds under $P(s,\omega)$ \textit{a.s.} $[\textbf{P}]$. Consequently, $\sum_{h=1}^H\mathcal{T}_h\xrightarrow{\mathcal{L}}N(0,\lambda\textbf{b}\Gamma_7 \textbf{b}^T)$ as $\nu\rightarrow\infty$ \textit{a.s.} $[\textbf{P}]$ with $\Gamma_7$=$\Gamma_1$. This completes the proof of \textsl{(ii)}.
\end{proof}
Next, consider $\{Z_i\}_{i=1}^N$  as in ($4$) in the main text  with $F_{y,H}$ replacing $F_y$. Also, consider $B_{z,N}(u,t)$ and $\mathds{B}_n(u,t)$ from \eqref{eq 39}.  Now, we state the following lemma. 
\begin{lemma}\label{lem s9}
\textsl{(i)} Suppose that $H$ is fixed as $\nu\rightarrow\infty$, and Assumptions $1$, $6$ and $8$ in the main article hold. Then, under $P(s,\omega)$, 

$$
E\big[\big(\mathds{B}_n(t_1,t_2)\big)^2\big(\mathds{B}_n(t_2,t_3)\big)^2\big]\leq L_1\big(B_{z,N}(t_1, t_3)\big)^2 \textit{ a.s. } [\textbf{P}]
$$
for any $0\leq t_1<t_2<t_3\leq 1$, $\nu\geq 1$ and some constant $L_1>0$, and
$$
\overline{\lim}_{\nu\rightarrow\infty} E\big(\mathds{B}_n(u,t)\big)^4\leq L_2 (t-u)^2  \textit{ a.s. } [\textbf{P}]
$$
for any $0\leq u<t\leq 1$ and some constant $L_2>0$. 

\noindent{\textsl{(ii)} Further, if $H\rightarrow\infty$ as $\nu\rightarrow\infty$, and Assumptions $1$, $8$ and $9$ in the main article hold, then the same results hold.}
\end{lemma}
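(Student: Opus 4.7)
My plan is to imitate the proofs of Lemmas \ref{lem 2} and \ref{lem 5}: expand the two fourth moments as sums over ordered tuples of population indices, split the sums by how many distinct tuples appear and by the stratum-membership pattern of those tuples, and bound each surviving piece by a constant multiple of $B_{z,N}(\cdot,\cdot)^2$. Two new ingredients relative to the single-stage case are the two-stage SRSWOR structure within each stratum and the stratum-wise independence of sampling.

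For each population unit $(h,j,l)$ let $\xi_{hj}^{(1)} = \mathds{1}[j\in s_h]$ and $\eta_{hjl}^{(2)} = \mathds{1}[l\in s_{hj}]$, so that $\pi_{hjl} = (m_h/M_h)(r_h/N_{hj})$. Define
\[
B_{hjl} = \mathds{1}[t_1 < Z_{hjl} \le t_2] - B_{z,N}(t_1,t_2), \quad C_{hjl} = \mathds{1}[t_2 < Z_{hjl} \le t_3] - B_{z,N}(t_2,t_3),
\]
\[
\alpha_{hjl} = B_{hjl}\bigl(\xi_{hj}^{(1)}\eta_{hjl}^{(2)}/\pi_{hjl} - 1\bigr),\qquad \beta_{hjl} = C_{hjl}\bigl(\xi_{hj}^{(1)}\eta_{hjl}^{(2)}/\pi_{hjl} - 1\bigr).
\]
Since $\sum_{h,j,l}B_{hjl}/N = \sum_{h,j,l}C_{hjl}/N = 0$, one has $\mathds{B}_n(t_1,t_2) = (\sqrt n/N)\sum_{h,j,l}\alpha_{hjl}$ and the analogous expression for $\mathds{B}_n(t_2,t_3)$, so that
\[
E\bigl[\mathds{B}_n(t_1,t_2)^2\mathds{B}_n(t_2,t_3)^2\bigr] = \frac{n^2}{N^4}\sum_{(i_1,i_2,i_3,i_4)} E\bigl[\alpha_{i_1}\alpha_{i_2}\beta_{i_3}\beta_{i_4}\bigr],
\]
where each $i_k$ ranges over triples $(h,j,l)$. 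By independence of sampling across strata, any configuration in which some stratum occurs exactly once among $\{i_1,i_2,i_3,i_4\}$ vanishes because the isolated factor contributes $E[\xi^{(1)}\eta^{(2)}/\pi-1]=0$, so only patterns in which the four strata are grouped into pairs or larger blocks survive.

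The main analytical step is to derive, for two-stage SRSWOR within a single stratum, the analogues of the Boistard--Lopuha\"a--Ruiz-Gazen bounds \eqref{eq A5}: conditioning on the first-stage draws and then applying SRSWOR moment identities independently across the selected clusters yields product-moment bounds for up to four factors of $\xi^{(1)}\eta^{(2)}/\pi - 1$ in the same stratum, with the correct polynomial decay in $n_h = m_h r_h$ (and extra care when two indices share a cluster, where the second-stage covariance is invoked). Substituting these bounds, absorbing the $1/\pi_{hjl}$ weights and controlling ratios such as $nM_h/(m_h r_h N)$, $N_{hj}/N_h$ and $N_h/N$ by Assumption $6$ in part (i) and by Assumption $9$ in part (ii), each surviving term reduces to a constant multiple of $(1/N^2)\bigl(\sum_{h,j,l}|B_{hjl}C_{hjl}|\bigr)^2$ or $(1/N)\sum_{h,j,l}B_{hjl}^2 C_{hjl}^2$. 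The disjointness of the supports of $\mathds{1}[t_1<Z\le t_2]$ and $\mathds{1}[t_2<Z\le t_3]$ gives $\sum_{h,j,l}|B_{hjl}C_{hjl}| \le 3N\,B_{z,N}(t_1,t_2)B_{z,N}(t_2,t_3)$ and $\sum_{h,j,l}B_{hjl}^2 C_{hjl}^2 \le 2N\,B_{z,N}(t_1,t_2)B_{z,N}(t_2,t_3)$, both bounded by a constant times $N\,B_{z,N}(t_1,t_3)^2$. Combining these estimates yields $E[\mathds{B}_n(t_1,t_2)^2\mathds{B}_n(t_2,t_3)^2] \le L_1\,B_{z,N}(t_1,t_3)^2$.

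For $E[\mathds{B}_n(u,t)^4]$ the same expansion is used: the diagonal $(n^2/N^4)\sum_{h,j,l}E[\alpha_{hjl}^4]$ is $O(1/n)$, and the off-diagonal pieces are bounded by $K\,B_{z,N}(u,t)^2$; since $B_{z,N}(u,t) \to t-u$ almost surely $[\mathbf{P}]$ by SLLN (under Assumption $6$ in part (i)) or by the stratified SLLN permitted under Assumption $9$ in part (ii), the stated $L_2(t-u)^2$ limiting bound follows. The principal obstacle will be the bookkeeping in part (ii): when $H \to \infty$ the number of stratum-partition patterns grows, and the accumulated contributions from the $O(H)$ same-stratum terms and $O(H^2)$ pair-stratum terms must be kept of the claimed order uniformly in $\nu$. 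This is handled by invoking Assumption $9$ and arguments parallel to those used in the $H \to \infty$ half of the proof of Lemma \ref{lem s8}, which supply the uniform control of the stratum-wise weight ratios and the Lyapunov-type moment control needed to absorb the $H$-growth.
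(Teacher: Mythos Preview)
Your approach is correct but organized differently from the paper's. The paper does a \emph{hierarchical} expansion that mirrors the nesting of the design: it first writes $\mathds{B}_n$ as a sum of stratum-level blocks $\tilde\alpha_h,\tilde\beta_h$ and expands $E[(\sum_h\tilde\alpha_h)^2(\sum_h\tilde\beta_h)^2]$ over partitions of $\{h_1,\dots,h_4\}$; then, for the surviving same-stratum block $E[\tilde\alpha_h^2\tilde\beta_h^2]$, it expands again over clusters $j$ within the stratum, and finally over units $l$ within a cluster. At the innermost level it uses only the crude uniform bounds $N/n=O(1)$ and $\max_{h,j}nM_hN_{hj}/(m_hr_hN)=O(1)$ (from Assumptions~1 and~6 in part~(i), Assumptions~1 and~9 in part~(ii)) to control $E\big[(M_hN_{hj}\xi_{hjl}/m_hr_h-1)^4\big]$ and its mixed analogues, and never invokes BLR-type product-moment inequalities of the form~\eqref{eq A5}. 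Your flat expansion over all $4$-tuples followed by grouping according to stratum pattern is equivalent in substance, and your observation that any stratum occurring exactly once forces the term to vanish is precisely what makes the $S_{3,H}$ and $S_{4,H}$ pieces in the paper's display~\eqref{eq 7} zero. The genuine difference is your plan to derive two-stage analogues of~\eqref{eq A5} by conditioning on the first-stage draws: this is more work than the paper does, since the paper's nested decomposition reduces each level to a single-stage SRSWOR computation where the uniform weight bound already suffices. Either route closes the argument; the paper's hierarchical bookkeeping is shorter because it never has to track mixed product moments across the two stages simultaneously, while your route is more systematic and would make the dependence on the design constants fully explicit. Your handling of part~(ii) is also in line with the paper, which simply notes that the same two weight-ratio bounds follow from Assumptions~1 and~9 and reruns the part~(i) argument verbatim.
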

\begin{proof} 
Recall $Y_{hjl}^{\prime}$ from the $2^{nd}$ paragraph in Section $4$ of the main text. Let us define $Z_{hjl}^{\prime}$=$F_{y,H}(Y_{hjl}^{\prime})$ for any given $h$=$1,\ldots,H$, $j$=$1,\ldots,M_h$ and $l$=$1,\ldots,N_{hj}$. Consider $F_{z,N}(t)$ and $\mathds{Z}_n(t)$ as in \eqref{eq 37}.  Recall from Section $4$ in the main text that given any $h,j$ and $l$, $Y_{hjl}^{\prime}$=$Y_i$ for some $i\in\{1,\ldots,N\}$. Also, recall from Section $4$ that under $P(s,\omega)$, the inclusion probability of the $i^{th}$ population unit is $\pi_i$=$m_h r_h/M_h N_{hj}$ if it belongs to the $j^{th}$ cluster of the $h^{th}$ stratum. Then, we have $\mathds{Z}_n(t)$=$\sqrt{n}\sum_{h=1}^H \sum_{j\in s_h}\sum_{l\in s_{hj}} M_h N_{hj} \big(\mathds{1}_{[Z_{hjl}^{\prime}\leq t]}-F_{z,N}(t)\big)/m_h r_h N$.
\par
\vspace{.1cm} 

Now, suppose that  for $h$=$1,\ldots, H$, $j$=$1,\ldots,M_h$ and $l$=$1,\ldots,N_{hj}$, 
\begin{align*}
\begin{split}
 \xi_{hjl}=
\begin{cases}
1,\text{ if the }l^{th}\text{ unit of the }j^{th}\text{ cluster in the }h^{th}\text{ stratum is selected in the sample},\text{ and }\\
0,\text{ otherwise.}\\
\end{cases}
\end{split}
\end{align*} 
Then, we have 
\begin{align*}
\begin{split}
\mathds{Z}_n(t)=(\sqrt{n}/N)\sum_{h=1}^H \sum_{j=1}^{M_h}\sum_{l=1}^{N_{hj}} \big((M_h N_{hj}  \xi_{hjl} /m_h r_h)-1\big)\big(\mathds{1}_{[Z_{hjl}^{\prime}\leq t]}-F_{z,N}(t)\big).
\end{split}
\end{align*}
Further, suppose that 
\begin{align*}
\begin{split}
&\tilde{\alpha}_h=\sum_{j=1}^{M_h}\sum_{l=1}^{N_{hj}}\big((M_h N_{hj}  \xi_{hjl} /m_h r_h)-1\big)\bar{A}_{hjl}\text{ and }\\
&\tilde{\beta}_h=\sum_{j=1}^{M_h}\sum_{l=1}^{N_{hj}}\big((M_h N_{hj}  \xi_{hjl} /m_h r_h)-1\big)\bar{B}_{hjl}
\end{split}
\end{align*} 
 for $h$=$1,\ldots, H$ and $0\leq t_1<t_2<t_3\leq 1$, where $\bar{A}_{hjl}$=$\mathds{1}_{[t_1<Z_{hjl}^{\prime}\leq t_2]}-B_{z,N}(t_1,t_2)$ and $\bar{B}_{hjl}$=$\mathds{1}_{[t_2<Z_{hjl}^{\prime}\leq t_3]}-B_{z,N}(t_2,t_3)$. Now, let us define $S_{k,H}$=$\{(h_1,\ldots,h_k):h_1,\ldots,h_k\in\{1,2,\ldots,H\}\text{ and }$ $h_1,\ldots,h_k\text{ are all}$ $ \text{distinct}\}$ for $k$=$2,3,4$. Then, we have
 
\begin{align}\label{eq 7}
\begin{split}
&E\big[\big(\mathds{B}_n(t_1,t_2)\big)^2\big(\mathds{B}_n(t_2,t_3)\big)^2\big]=(n^2/N^4)E\bigg[\sum_{h=1}^H \tilde{\alpha}_h^2\tilde{\beta}_h^2+\sum_{(h_1,h_2)\in S_{2,H}} \tilde{\alpha}_{h_1}^2\tilde{\beta}_{h_2}^2\\
&+\sum_{(h_1,h_2)\in S_{2,H}} \tilde{\alpha}_{h_1}^2\tilde{\beta}_{h_1} \tilde{\beta}_{h_2}+\sum_{(h_1,h_2)\in S_{2,H}} \tilde{\alpha}_{h_1}\tilde{\alpha}_{h_2} \tilde{\beta}^2_{h_2}+\sum_{(h_1,h_2)\in S_{2,H}} \tilde{\alpha}_{h_1}\tilde{\alpha}_{h_2}\tilde{\beta}_{h_1} \tilde{\beta}_{h_2}\\
&+\sum_{(h_1,h_2,h_3)\in S_{3,H}} \tilde{\alpha}_{h_1}^2\tilde{\beta}_{h_2} \tilde{\beta}_{h_3}+ \sum_{(h_1,h_2,h_3)\in S_{3,H}} \tilde{\alpha}_{h_1}\tilde{\alpha}_{h_2} \tilde{\beta}_{h_3}^2+\\
&\sum_{(h_1,h_2,h_3,h_4)\in S_{4,H}} \tilde{\alpha}_{h_1}\tilde{\alpha}_{h_2} \tilde{\beta}_{h_3}\tilde{\beta}_{h_4}\bigg].
\end{split}
\end{align}
\vspace{.2cm}

\textsl{(i)}  Suppose that $\overline{\alpha}_{hjl}$=$\big((M_h N_{hj}  \xi_{hjl} /m_h r_h)-1\big) \bar{A}_{hjl}$, $\overline{\beta}_{hjl}$=$\big((M_h N_{hj}  \xi_{hjl} /m_h r_h)-1\big) \bar{B}_{hjl}$, $\alpha_{hj}^{*}$=$\sum_{l=1}^{N_{hj}}\overline{\alpha}_{hjl}$ and $\beta_{hj}^{*}$=$\sum_{l=1}^{N_{hj}}\overline{\beta}_{hjl}$ for $h$=$1,\ldots, H$, $j$=$1,\ldots,M_{h}$, $l$=$1,\ldots,N_{hj}$ and $0\leq t_1<t_2<t_3\leq 1$. Then, we have $\tilde{\alpha}_h$=$\sum_{j=1}^{M_h}\alpha_{hj}^{*}$ and $\tilde{\beta}_{h}$=$\sum_{j=1}^{M_h}\beta_{hj}^{*}$. Now, let us consider the first term on right hand side of \eqref{eq 7}. Further, suppose that $S_{k,h}$=$\{(j_1,\ldots,j_k):j_1,\ldots,j_k\in \{1,\ldots,M_h\}\text{ and }j_1,\ldots,j_k\text{ are all distinct}\}$, $k$=$2,3,4$, $1\leq h\leq H$. Then, we have
\begin{align}\label{eq s5} 
\begin{split}
&(n^2/N^4)\sum_{h=1}^H  E(\tilde{\alpha}_h^2\tilde{\beta}_h^2)=(n^2/N^4)\sum_{h=1}^H  E\bigg[\sum_{j=1}^{M_h} (\alpha_{hj}^{*}\beta_{hj}^{*})^2+\sum_{(j_1,j_2)\in S_{2,h}} (\alpha_{h j_1}^{*}\beta_{h j_2}^{*})^2\\
&+\sum_{(j_1,j_2)\in S_{2,h}} (\alpha_{h j_1}^{*})^2\beta_{h j_1}^{*} \beta_{h j_2}^{*}+\sum_{(j_1,j_2)\in S_{2,h}} \alpha_{h j_1}^{*}\alpha_{h j_2}^{*} (\beta_{h j_2}^{*})^2+\\
&\sum_{(j_1,j_2)\in S_{2,h}} \alpha_{h j_1}^{*}\alpha_{h j_2}^{*}\beta_{h j_1}^{*} \beta_{h j_2}^{*}+\sum_{(j_1,j_2,j_3)\in S_{3,h}} (\alpha_{h j_1}^{*})^2\beta_{h j_2}^{*} \beta_{h j_3}^{*}+ \\
&\sum_{(j_1,j_2,j_3)\in S_{3,h}} \alpha_{h j_1}^{*}\alpha_{h j_2}^{*} (\beta_{h j_3}^{*})^2+\sum_{(j_1,j_2,j_3, j_4)\in S_{4,h}} \alpha_{h j_1}^{*}\alpha_{h j_2}^{*} \beta_{h j_3}^{*}\beta_{h j_4}^{*}\bigg].
\end{split}
\end{align}
Next, consider the first term on the right hand side of \eqref{eq s5}. Suppose that $S_{k,hj}$=$\{(l_1,\ldots,l_k):l_1,\ldots,l_k\in \{1,\ldots,N_{hj}\}\text{ and }l_1,\ldots,l_k\text{ are all distinct}\}$, $k$=$2,3,4$, $j$=$1,\ldots,M_h$ and $1\leq h\leq H$. Then, we have
\begin{align}\label{eq 12}
\begin{split}
&(n^2/N^4)\sum_{h=1}^H E\bigg[\sum_{j=1}^{M_h} (\alpha_{hj}^{*}\beta_{hj}^{*})^2\bigg]=(n^2/N^4)\sum_{h=1}^H  E\bigg[\sum_{j=1}^{M_h}\bigg(\sum_{l=1}^{N_{hj}} (\overline{\alpha}_{hjl}\overline{\beta}_{hjl})^2\\
&+\sum_{(l_1,l_2)\in S_{2,hj}} (\overline{\alpha}_{hj l_1}\overline{\beta}_{h j l_2})^2+\sum_{(l_1,l_2)\in S_{2,hj}} (\overline{\alpha}_{h j l_1})^2 \overline{\beta}_{h j l_1}  \overline{\beta}_{h j l_2}\\
&+\sum_{(l_1,l_2)\in S_{2,hj}} \overline{\alpha}_{h j l_1} \overline{\alpha}_{h j l_2} (\overline{\beta}_{h j l_2})^2+\sum_{(l_1,l_2)\in S_{2,hj}} \overline{\alpha}_{h j l_1}\overline{\alpha}_{h j l_2}\overline{\beta}_{h j l_1} \overline{\beta}_{h j_2}+\\
&\sum_{(l_1,l_2,l_3)\in S_{3,hj}} (\overline{\alpha}_{h j l_1})^2\overline{\beta}_{h j l_2} \overline{\beta}_{h j l_3}+\sum_{(l_1,l_2,l_3)\in S_{3,hj}} \overline{\alpha}_{h j l_1} \overline{\alpha}_{h j l_2} (\overline{\beta}_{h j l_3})^2\\
&+\sum_{(l_1,l_2,l_3, l_4)\in S_{4,hj}} \overline{\alpha}_{h j l_1}\overline{\alpha}_{h j l_2} \overline{\beta}_{h j l_3}\overline{\beta}_{h j l_4}\bigg)\bigg].
\end{split}
\end{align}
Now, consider the first term on the right hand side of \eqref{eq 12}. Note that $N/n$=$O(1)$ and $\max_{1\leq h\leq H, 1\leq j\leq M_h}(n M_h N_{hj}/r_h m_h N)$=$O(1)$ as $\nu\rightarrow\infty$ by Assumptions $1$ and $6$. Then, we have
\begin{align}\label{eq 74}
\begin{split}
&(n^2/N^4)\sum_{h=1}^H \sum_{j=1}^{M_h}\sum_{l=1}^{N_{hj}}  E(\overline{\alpha}_{hjl}\overline{\beta}_{hjl})^2=(n^2/N^4)\sum_{h=1}^H \sum_{j=1}^{M_h}\sum_{l=1}^{N_{hj}}  E\big((M_h N_{hj}  \xi_{hjl} /m_h r_h)-\\
&1\big)^4 \bar{A}_{hjl}^2 \bar{B}_{hjl}^2\leq(K_1/N^2)\sum_{h=1}^H\sum_{j=1}^{M_h}\sum_{l=1}^{N_{hj}}\bigg(\mathds{1}_{[t_1<Z_{hjl}^{\prime}\leq t_2]} +B_{z,N}(t_1,t_2)\bigg)\times\\
&\bigg(\mathds{1}_{[t_2<Z_{hjl}^{\prime}\leq t_3]}+ B_{z,N}(t_2,t_3)\bigg)\leq K_2 \big(B_{z,N}(t_1,t_3)\big)^2
\end{split}
\end{align}
\textit{a.s.} $[\textbf{P}]$ for all $\nu\geq 1$ and some constants $K_1, K_2>0$. Inequalities similar to \eqref{eq 74} can be shown to hold for the other terms on the right hand side of \eqref{eq 12}. Thus
\begin{equation}\label{eq s4}
(n^2/N^4)\sum_{h=1}^H  \sum_{j=1}^{M_h} E(\alpha_{hj}^{*}\beta_{hj}^{*})^2\leq K_3\big(B_{z,N}(t_1,t_3)\big)^2
\end{equation} 
\textit{a.s.} $[\textbf{P}]$ for any $0\leq t_1<t_2<t_3\leq 1$, $\nu\geq 1$ and some constant $K_3>0$. Inequalities similar to \eqref{eq s4} can also be shown to hold for the other terms on the right hand side of \eqref{eq s5}.  Therefore,
\begin{equation}\label{eq 13}
(n^2/N^4)\sum_{h=1}^H  E(\tilde{\alpha}_h^2\tilde{\beta}_h^2)\leq K_4\big(B_{z,N}(t_1,t_3)\big)^2
\end{equation} 
\textit{a.s.} $[\textbf{P}]$ for any $0\leq t_1<t_2<t_3\leq 1$, $\nu\geq 1$ and some constant $K_4>0$. Furthermore, inequalities similar to \eqref{eq 13} can be shown to hold for the other terms on the right hand side of \eqref{eq 7}. Consequently, $E\big[\big(\mathds{B}_n(t_1,t_2)\big)^2\big(\mathds{B}_n(t_2,t_3)\big)^2\big]\leq K_5\big(B_{z,N}(t_1,t_3)\big)^2$ \textit{a.s.} $[\textbf{P}]$ for any $0\leq t_1<t_2<t_3\leq 1$, $\nu\geq 1$ and some constant $K_5>0$. Moreover, it can be shown in the same way that $\overline{\lim}_{\nu\rightarrow\infty} E\big(\mathds{B}_n(u,t)\big)^4\leq K_6 (t-u)^2$ \textit{a.s.} $[\textbf{P}]$ for any $0\leq u<t\leq 1$ and some constant $K_6>0$ because $B_{z,N}(u,t)\rightarrow (t-u)$ as $\nu\rightarrow\infty$ \textit{a.s.} $[\textbf{P}]$ by Assumption $8$ and SLLN. This completes the proof of \textsl{(i)}.
\vspace{.2cm}

\textsl{(ii)}  It follows from Assumptions $1$ and $9$ that $N/n$=$O(1)$ and $\max_{1\leq h\leq H, 1\leq j\leq M_h}(n M_h N_{hj}$ $/r_h m_h N)$ =$O(1)$ as $\nu\rightarrow\infty$. Then the proof of the result in \textsl{(ii)} follows the same way as the proof of the result in \textsl{(i)}.
\end{proof}
Next, recall $\lambda_h$'s from Assumption $6$, $F_{y,H}$ and $Q_{y,H}$ from the paragraph preceding Assumption $10$ and $\tilde{F}_y$ from Assumption $11$. Let us define $\tilde{Q}_y(p)$=$\inf\{t\in\mathds{R}: \tilde{F}_{y}(t)\geq p\}$ for $0<p<1$. Also, recall $\tilde{F}_{y,H}$ and $\tilde{Q}_{y,H}$ from the paragraph containing \eqref{eq 26}--\eqref{eq 28} in the proof of \textsl{(i)} in Lemma \ref{lem s8}. Then, we state the following lemma.
\begin{lemma}\label{lem s10}
\textsl{(i)} Suppose that $H$ is fixed as $\nu\rightarrow\infty$, and Assumptions $6$ and $8$ in the main text hold. Then, for any $0<\alpha<\beta<1$, 
$$
\sup_{p\in[\alpha,\beta]}|Q_{y,H}(p)-\tilde{Q}_{y,H}(p)|\rightarrow0\text{ as }\nu\rightarrow \infty.
$$  \\
\textsl{(ii)} Further, suppose that $H\rightarrow\infty$ as $\nu\rightarrow\infty$, and Assumptions $8$, $9$ and $11$ in the main text hold. Then, for any $0<\alpha<\beta<1$, 
$$
\sup_{p\in[\alpha,\beta]}|Q_{y,H}(p)-\tilde{Q}_y(p)|\rightarrow0\text{ as }\nu\rightarrow \infty.
$$  
\end{lemma}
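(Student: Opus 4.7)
The plan is to reduce each part of Lemma \ref{lem s10} to a routine inversion argument after transferring the already-established uniform convergence of distribution functions to the quantile scale. The key input I will reuse, from the proof of Lemma \ref{lem s8}, is the bound $\sup_{t \in \mathds{R}} |F_{y,H}(t) - \tilde{F}_{y,H}(t)| \rightarrow 0$ as $\nu \rightarrow \infty$ under Assumption $6$ (for part \textsl{(i)}); for part \textsl{(ii)} I will need to combine this with the further convergence $\sup_{t\in \mathds{R}}|\tilde{F}_{y,H}(t)-\tilde{F}_y(t)|\rightarrow 0$ as $H\to\infty$, which follows from Assumption $11$. Assumption $8$ supplies the smoothness needed for inversion: $\tilde{F}_{y,H}$ is continuous and strictly increasing on $[\tilde{Q}_{y,H}(\alpha/2),\tilde{Q}_{y,H}((1+\beta)/2)]$, so $\tilde{Q}_{y,H}$ is continuous on $[\alpha,\beta]$.

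For part \textsl{(i)}, I will fix $\epsilon>0$ and use continuity and strict monotonicity of $\tilde{F}_{y,H}$ to pick $\delta>0$ with
\[
\tilde{F}_{y,H}(\tilde{Q}_{y,H}(p)+\epsilon) - p \geq \delta \quad\text{and}\quad p - \tilde{F}_{y,H}(\tilde{Q}_{y,H}(p)-\epsilon) \geq \delta
\]
uniformly in $p\in[\alpha,\beta]$ (the uniform $\delta$ is obtained by a compactness argument on $[\alpha,\beta]$, using that $\tilde{Q}_{y,H}$ and the density of $\tilde{F}_{y,H}$ are bounded and bounded away from $0$ on the relevant compact range via Assumption $8$). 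For $\nu$ large enough that $\sup_t |F_{y,H}-\tilde{F}_{y,H}|<\delta$, the two inequalities above transfer to $F_{y,H}$, which by the definition of $Q_{y,H}(p)$ as $\inf\{t:F_{y,H}(t)\geq p\}$ gives $|Q_{y,H}(p)-\tilde{Q}_{y,H}(p)|\leq \epsilon$ simultaneously for all $p\in[\alpha,\beta]$.

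For part \textsl{(ii)}, I will run the identical argument but with $\tilde{F}_y$ in place of $\tilde{F}_{y,H}$: by Assumption $11$, $\tilde{F}_y$ has a positive density on the interior of its support, so it is continuous and strictly increasing, and $\tilde{Q}_y$ is well-defined and continuous on $[\alpha,\beta]$. The triangle inequality
\[
\sup_{t\in\mathds{R}}|F_{y,H}(t)-\tilde{F}_y(t)| \leq \sup_{t\in\mathds{R}}|F_{y,H}(t)-\tilde{F}_{y,H}(t)| + \sup_{t\in\mathds{R}}|\tilde{F}_{y,H}(t)-\tilde{F}_y(t)|
\]
lets me combine the bound from Lemma \ref{lem s8}'s proof (which uses Assumption $9$ to replace Assumption $6$ in the stratum-weight averaging step, exactly as in \eqref{eq 26}) with Assumption $11$ to conclude that the left-hand side tends to $0$. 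The same inversion step as in part \textsl{(i)} then yields uniform convergence of $Q_{y,H}$ to $\tilde{Q}_y$ on $[\alpha,\beta]$.

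The main obstacle is extracting a \emph{uniform} lower bound $\delta$ in $p\in[\alpha,\beta]$ for the gap between $p$ and $\tilde{F}_{y,H}(\tilde{Q}_{y,H}(p)\pm\epsilon)$; this is where Assumption $8$ (density bounded away from $0$ on a neighborhood of the quantile range) is essential, and care must be taken in part \textsl{(ii)} because the limiting distribution is now $\tilde{F}_y$ and one must verify that its density is bounded away from $0$ on the compact range $[\tilde{Q}_y(\alpha/2),\tilde{Q}_y((1+\beta)/2)]$, which Assumption $11$ provides.
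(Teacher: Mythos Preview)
Your proof is correct and follows the same overall strategy as the paper: transfer uniform convergence of $F_{y,H}$ to its limit, then invert using regularity of the limit distribution to obtain uniform quantile convergence on $[\alpha,\beta]$. The mechanics of the inversion step differ slightly. The paper works on the quantile scale: it uses uniform continuity of $\tilde{Q}_{y,H}$ on the enlarged interval $[\alpha/2,(1+\beta)/2]$, shows $\sup_{p\in[\alpha,\beta]}|p-\tilde{F}_{y,H}(Q_{y,H}(p))|\to 0$ via $F_{y,H}(Q_{y,H}(p))=p$, and then writes $Q_{y,H}(p)=\tilde{Q}_{y,H}\big(\tilde{F}_{y,H}(Q_{y,H}(p))\big)$ to conclude. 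You instead work on the CDF scale, using a positive-density lower bound to extract a uniform $\delta$ and then sandwiching $Q_{y,H}(p)$ between $\tilde{Q}_{y,H}(p)\pm\epsilon$ directly from the definition of the generalized inverse. Both arguments are standard and equivalent; yours leans more explicitly on the density bound from Assumption~$8$, while the paper's composition trick is slightly slicker but needs the same ingredient (to get uniform continuity of $\tilde{Q}_{y,H}$).

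One small caution on part \textsl{(ii)}: your triangle-inequality detour through $\tilde{F}_{y,H}=\sum_h\Lambda_h F_{y,h}$ implicitly invokes the limits $\Lambda_h$ from Assumption~$6$, which is not among the hypotheses of \textsl{(ii)}. It is cleaner to argue directly from Assumption~$11$ that $\sup_t|F_{y,H}(t)-\tilde{F}_y(t)|\to 0$, and then run your sandwich argument with $\tilde{F}_y$ in place of $\tilde{F}_{y,H}$ throughout; this is exactly what the paper does when it says part \textsl{(ii)} ``follows exactly the same way.''
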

\begin{proof}
\textsl{(i)} Note that the inverse of $F_{y,H}|_{\mathcal{C}_y}$, say $F_{y,H}^{-1}:(0,1)\rightarrow \mathcal{C}_y$, exists and is differentiable by Assumption $8$, and $F_{y,H}^{-1}(p)$=$Q_{y,H}(p)$ for any $0<p<1$. Also, note that the inverse of $\tilde{F}_{y,H}|_{\mathcal{C}_y}$, say $\tilde{F}_{y,H}^{-1}:(0,1)\rightarrow \mathcal{C}_y$, exists and is differentiable, and $\tilde{F}_{y,H}^{-1}(p)$=$\tilde{Q}_{y,H}(p)$ for any $0<p<1$. Clearly, $\tilde{Q}_{y,H}$ is uniformly continuous on $[\alpha/2,(1+\beta)/2]$. Then, given any $\epsilon>0$ there exists a $\delta>0$ such that 
$$
|\tilde{Q}_{y,H}(p_1)-\tilde{Q}_{y,H}(p_2)|\leq \epsilon,\text{ whenever }|p_1-p_2|\leq\delta\text{ and }p_1,p_2\in [\alpha/2,(1+\beta)/2].
$$
 Now, it follows that 
$$
\sup_{p\in[\alpha,\beta]}|p-\tilde{F}_{y,H}(Q_{y,H}(p))|=\sup_{p\in[\alpha,\beta]}|F_{y,H}(Q_{y,H}(p))-\tilde{F}_{y,H}(Q_{y,H}(p))|\rightarrow 0
$$
 as $\nu\rightarrow\infty$. This further implies that 
$$
\sup_{p\in[\alpha,\beta]}|p-\tilde{F}_{y,H}(Q_{y,H}(p))|\leq \min\{\alpha/2,(1-\beta)/2,\delta\}
$$
 for all sufficiently large $\nu$. Therefore, 
$$
\alpha/2\leq \tilde{F}_{y,H}(Q_{y,H}(p))\leq (1+\beta)/2\text{ for all }p\in[\alpha,\beta]
$$
 and all sufficiently large $\nu$. Hence, 
$$
\sup_{p\in[\alpha,\beta]}|Q_{y,H}(p)-\tilde{Q}_{y,H}(p)|=\sup_{p\in[\alpha,\beta]}|\tilde{Q}_{y,H}(\tilde{F}_{y,H}(Q_{y,H}(p)))-\tilde{Q}_{y,H}(p)|\leq \epsilon
$$
 for all sufficiently large $\nu$. This completes the proof of \textsl{(i)}. The proof of \textsl{(ii)} follows exactly the same way as the proof of \textsl{(i)}.
\end{proof}
Next, we state the following lemma, which will be required to prove Theorem $5.4$.
\begin{lemma}\label{lem 9}
Fix $0<\alpha<\beta<1$. Suppose that the assumptions of Theorem $4.1$ hold,  $K(p_1,p_2)$ is as in ($8$) in Section $4$ of the main article, and $\hat{K}(p_1,p_2)$ is as in ($16$) in Section $5.1$ of the main text.  Then, the results in \eqref{eq A13} of Lemma \ref{lem 8} hold under stratified multistage cluster sampling design with SRSWOR.
\end{lemma}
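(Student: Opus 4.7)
The plan is to mimic the structure of the proof of Lemma~\ref{lem 8}, replacing the single-stage high entropy design by the stratified multistage design $P(s,\omega)$ and substituting the quantile process convergence of Theorem~$3.1$ with that of Theorem~$4.1$. Concretely, I would split the argument into the two displayed conclusions of \eqref{eq A13}, handling the sup-norm boundedness first, and then the pointwise convergence.

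For the sup-norm bound, the starting point is that under the hypotheses of Theorem~$4.1$, the process $\{\sqrt{n}(\hat{Q}_y(p)-Q_{y,H}(p)): p\in[\alpha/2,(1+\beta)/2]\}$ converges weakly in $(D[\alpha/2,(1+\beta)/2],\mathcal{D})$ to a mean-zero Gaussian process under $\textbf{P}^{*}$. The continuous mapping theorem yields $\sup_{p\in[\alpha/2,(1+\beta)/2]}|\sqrt{n}(\hat{Q}_y(p)-Q_{y,H}(p))|=O_p(1)$, and since Assumption~$8$ implies that $f_{y,H}\circ Q_{y,H}$ is bounded away from $0$ on compact subsets of $(0,1)$, the difference $\sqrt{n}(\hat{Q}_y(p+1/\sqrt{n})-\hat{Q}_y(p-1/\sqrt{n}))/2$ is $O_p(1)$ uniformly on $[\alpha,\beta]$. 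The same applies to the $x$-quantile. The remaining ingredients in $\hat{K}(p_1,p_2)$ -- namely the HT-type estimators of $E_{\textbf{P}}(Y_i)$, $E_{\textbf{P}}(X_i)$, $E_{\textbf{P}}(X_iY_i)$ and $E_{\textbf{P}}(X_i^2)$ listed in Table~\ref{table 10}, together with the weighted cluster sums -- are shown to converge in probability to their superpopulation analogues by combining Assumptions~$6$,~$9$ and~$10$ (for the $H$ fixed and $H\to\infty$ regimes respectively) with Hoeffding-type inequalities as in Lemma~\ref{lem s8}. Assembling these pieces gives the first conclusion of \eqref{eq A13}.

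For the pointwise convergence, I would introduce the intermediate quantity $\tilde{K}(p_1,p_2)$ obtained by replacing the estimated centering, density reciprocals, and superpopulation moments in $\hat{K}$ by their true deterministic values (analogous to the $\tilde{K}$ in Lemma~\ref{lem 8}), and show both $\hat{K}-\tilde{K}\xrightarrow{p}0$ and $\tilde{K}\xrightarrow{p}K$. The convergence $\tilde{K}\xrightarrow{p}K$ follows from Lemma~\ref{lem s8} combined with DCT and Assumption~$10$ (or~$6$--$8$ in the fixed-$H$ case), giving the deterministic limit of the design-variance expression. The difference $\hat{K}-\tilde{K}\xrightarrow{p}0$ requires, as in Lemma~\ref{lem 8}, the consistency of the density-reciprocal estimator: arguing as in the proof of Proposition~$3.1$ but using Lemma~\ref{lem s9} to control the empirical process increments and Lemma~\ref{lem s10} to handle the convergence $Q_{y,H}\to\tilde{Q}_{y,H}$ (or $\tilde{Q}_y$), one establishes the Bahadur-type expansion $\hat{Q}_y(p)-Q_{y,H}(p)=(p-\hat{F}_y(Q_{y,H}(p)))/f_{y,H}(Q_{y,H}(p))+o_p(1/\sqrt{n})$ under $\textbf{P}^{*}$, whence $\sqrt{n}(\hat{Q}_y(p+1/\sqrt{n})-\hat{Q}_y(p-1/\sqrt{n}))/2\xrightarrow{p} 1/f_{y,H}(Q_{y,H}(p))$, with the analogous statement for $x$.

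The main obstacle I expect is the proper handling of the two asymptotic regimes -- $H$ fixed versus $H\to\infty$ -- simultaneously, because the moment bounds in Lemmas~\ref{lem s8} and~\ref{lem s9} rest on different assumption sets (Assumptions~$6$--$7$ in the first case, Assumptions~$9$--$10$ in the second), and the limiting population quantile must be identified correctly ($\tilde{Q}_{y,H}$ in one regime and $\tilde{Q}_y$ in the other) via Lemma~\ref{lem s10}. Once the Bahadur expansion and the convergence of the HT-type moment estimators are in hand, plugging them into the definition of $\hat{K}$ and using the continuity of the map that sends these ingredients to $K$ finishes the proof, with part (ii) of the lemma following exactly as part (i) by invoking the $H\to\infty$ branches of Lemmas~\ref{lem s8}--\ref{lem s10}.
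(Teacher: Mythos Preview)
Your proposal is correct and follows essentially the same approach as the paper: the paper's own proof simply states that it ``follows exactly the same way as the proof of \textsl{(i)} in Lemma \ref{lem 8} for the cases, when $H$ is fixed as $\nu\rightarrow\infty$ and $H\rightarrow\infty$ as $\nu\rightarrow\infty$,'' and your outline spells out precisely this mimicry, correctly substituting Theorem~$4.1$ for Theorem~$3.1$ and invoking Lemmas~\ref{lem s8}--\ref{lem s10} in place of the single-stage ingredients.
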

\begin{proof}
The proof follows exactly the same way as the proof of \textsl{(i)} in Lemma \ref{lem 8} for the cases, when $H$ is fixed as $\nu\rightarrow\infty$ and $H\rightarrow\infty$ as $\nu\rightarrow\infty$.
\end{proof}
\section{Proofs of several propositions and theorems}\label{sec 5}
 Before we give the proof of Proposition $3.1$, suppose that $P(s,\omega)$ denotes a high entropy sampling design satisfying Assumption $2$, and $Q(s,\omega)$ denotes a rejective sampling design having inclusion probabilities equal to those of $P(s,\omega)$.
\begin{proof}[Proof of Proposition $3.1$] 
  We shall first show that the conclusion of Proposition $3.1$ holds for $Q(s,\omega)$.  Let us recall $F_{z,N}(t)$ and $\mathds{Z}_n(t)$ from the paragraph preceding Lemma \ref{lem 2}. Then, for $d(i,s)$=$(N\pi_i)^{-1}$, we have
\begin{align}\label{eq 38} 
\begin{split}
&\mathds{H}_n:=\big\{\sqrt{n}(\hat{F}_{z}(t)-t):t\in [0,1]\big\}=\mathds{U}_n+\sqrt{n/N}\mathds{W}_N\text{ with }\mathds{U}_n=\bigg\{\mathds{Z}_n(t)\bigg/\\
&\sum_{i\in s}(N\pi_i)^{-1}:t\in[0,1]\bigg\}\text{ and }\mathds{W}_N=\big\{\sqrt{N}(F_{z,N}(t)-t):t\in [0,1]\big\}. 
\end{split}
\end{align}
Next, recall $B_{z,N}(u,t)$ and $\mathds{B}_n(u,t)$ from the paragraph preceding Lemma \ref{lem 2}. Then, by Lemma \ref{lem 2}, we have $E\big[\big(\mathds{B}_n(t_1,t_2)\big)^2 \big(\mathds{B}_n(t_2,t_3)\big)^2\big]\leq K_1\big(B_{z,N}(t_1,t_3)\big)^2$ for all dyadic rational numbers $0\leq t_1<t_2<t_3\leq 1$ \textit{a.s.} $[\textbf{P}]$, where $K_1>0$ is some constant and $\nu\geq 1$. This further implies that
\begin{equation}\label{eq A2}
E\big[\big(\mathds{B}_n(t_1,t_2)\big)^2\big(\mathds{B}_n(t_2,t_3)\big)^2\big]\leq K_1\big(B_{z,N}(t_1,t_3)\big)^2\text{ for any }0\leq t_1<t_2<t_3\leq 1 
\end{equation}
 \textit{a.s.} $[\textbf{P}]$, where $\nu\geq 1$. Suppose that 
\begin{equation}\label{eq 40}
w_n(1/r)=\sup_{|t-u|\leq 1/r} |\mathds{Z}_n(t)-\mathds{Z}_n(u)|\text{ and } B=\{s\in\mathcal{S}:w_n(1/r)\geq\delta\} 
\end{equation} 
for $r$=$1,2,\ldots$. Here, $w_n(1/r)$ is the modulus of continuity of $\{\mathds{Z}_n(t):t\in[0,1]\}$. Then, by using \eqref{eq A2} above and imitating the proof of Lemma $2.3.1$ in \cite{shorack2009empirical} (see p. $49$), we obtain
\begin{align}\label{eq A3}
\begin{split}
&\sum_{s\in B}Q(s,\omega)\leq \delta^{-4}\bigg(\sum_{j=1}^r E\big\{\mathds{B}_n\big((j-1)/r,j/r\big)\big\}^4+\\
&K_2 B_{z,N}(0,1)\max_{1\leq j\leq r}B_{z,N}\big((j-1)/r,j/r\big)\bigg)
\end{split}
\end{align} 
\textit{a.s.} $[\textbf{P}]$ for any $\delta>0$, $r\geq 1,\nu\geq 1$ and some constant $K_2>0$. Next, it follows from \eqref{eq A3} that
\begin{equation}\label{eq 41}
\overline{\lim}_{\nu\rightarrow\infty} E\big\{\mathds{B}_n\big((j-1)/r,j/r\big)\big\}^4\leq K_3 (1/r)^2
\end{equation}
\textit{a.s.} $[\textbf{P}]$ for any $j$=$1,\ldots,r$, $r\geq 1$ and some constant $K_3>0$ by Lemma \ref{lem 2}. Now, note that $\{Z_i\}_{i=1}^N$ are i.i.d. uniform random variables supported on $(0,1)$ since $F_y$ is continuous by Assumption $3$. Then, $B_{z,N}(u,t)\rightarrow t-u$ \textit{a.s.} $[\textbf{P}]$ by SLLN. Therefore, in view of \eqref{eq A3} and \eqref{eq 41}, we have
\begin{equation}\label{eq A1}
\overline{\lim}_{\nu\rightarrow\infty}\sum_{s\in B}Q(s,\omega)\leq \delta^{-4}(K_2/r+K_3/r) \text{ \textit{a.s.} }[\textbf{P}]
\end{equation}
for any $\delta>0$ and $r\geq 1$. Since, $\sum_{s\in B}Q(s,\omega)$ is bounded, by taking expectation of left hand side in \eqref{eq A1} w.r.t. $\textbf{P}$ and applying an extended version of Fatou's lemma, we obtain that 
\begin{equation}
\overline{\lim}_{\nu\rightarrow\infty}\textbf{P}^*\{w_n(1/r)\geq \delta\}\leq \delta^{-4}(K_2/r+K_3/r) 
\end{equation}
for any $\delta>0$ and $r\geq 1$. This further implies that $\overline{\lim}_{\nu\rightarrow\infty}\textbf{P}^*\{w_n(1/r)\geq \delta\}\rightarrow0$ for any $\delta$ as $r\rightarrow \infty$. Then by Theorem $2.3.2$ in \cite{shorack2009empirical} (see p. $46$), $\{\mathds{Z}_n:\nu\geq 1\}$ is weakly/relatively compact in $(\tilde{D}[0,1],\tilde{\mathcal{D}})$ with respect to the sup norm metric under $\textbf{P}^*$. In other words, given any subsequence $\{\nu_k\}$, there exists a further subsequence $\{\nu_{k_l}\}$ such that $E_{\textbf{P}^{*}}(f(\mathds{Z}_n))\rightarrow E(f(\mathds{Z}))$ along the subsequence $\{\nu_{k_l}\}$ for any bounded continuous (with respect to the sup norm metric) and $\tilde{\mathcal{D}}$-measurable function $f$, and for some random function $\mathds{Z}$ in $(\tilde{D}[0,1],\tilde{\mathcal{D}})$ (see p. $44$ in  \cite{shorack2009empirical}).

Now, under $Q(s,\omega)$, $\textbf{c}(\mathds{Z}_n(t_1),\ldots,\mathds{Z}_n(t_k))^T\xrightarrow{\mathcal{L}}N(0,\textbf{c}\Gamma_3 \textbf{c}^T)$ as $\nu\rightarrow\infty$ \textit{a.s.} $[\textbf{P}]$ by Lemma \ref{lem 1}, where $k\geq 1$, $t_1,\ldots,t_k\in(0,1)$, $\textbf{c}\in\mathds{R}^k$, $\textbf{c}\neq 0$ and $\Gamma_3$ is a p.d. matrix. Moreover, $\Gamma_3$=$\lim_{\nu\rightarrow\infty}n N^{-2} \sum_{i=1}^{N}(\textbf{Z}_i-\textbf{T}_{z}\pi_i)^T(\textbf{Z}_i-\textbf{T}_z\pi_i)(\pi_i^{-1}-1)$ \textit{a.s.} $[\textbf{P}]$, where $\textbf{Z}_i$=$(\mathds{1}_{[Z_i\leq t_1]}-F_{z,N}(t_1),\ldots,\mathds{1}_{[Z_i\leq t_k]}-F_{z,N}(t_k))$ and $\textbf{T}_z$=$\sum_{i=1}^N \textbf{Z}_i(1-\pi_i)/\sum_{i=1}^N \pi_i(1-\pi_i)$. Note that $\sum_{i=1}^N ||\textbf{Z}_i||^2/N$ is bounded. Also, note that Assumption $2$-\textsl{(ii)} holds under $Q(s,\omega)$ because $P(s,\omega)$ and $Q(s,\omega)$ have same inclusion probabilities, and Assumption $2$-\textsl{(ii)} holds under $P(s,\omega)$. Then, we have 
\begin{equation}\label{eq 47}
\Gamma_3=\lim_{\nu\rightarrow\infty}E_{\textbf{P}}(n N^{-2}\sum_{i=1}^{N}(\textbf{Z}_i-\textbf{T}_{z}\pi_i)^T(\textbf{Z}_i-\textbf{T}_z\pi_i)(\pi_i^{-1}-1))
\end{equation}
 by DCT. Further, it follows from DCT that under $\textbf{P}^*$, 
\begin{align}
\begin{split}
&\textbf{c}(\mathds{Z}_n(t_1),\ldots,\mathds{Z}_n(t_k))^T\xrightarrow{\mathcal{L}}N(0,\textbf{c}\Gamma_3 \textbf{c}^T)\text{ for any }\textbf{c}\neq 0,\text{ and hence }\\
&(\mathds{Z}_n(t_1),\ldots,\mathds{Z}_n(t_k))\xrightarrow{\mathcal{L}}N(0,\Gamma_3)
\end{split}
\end{align}
 as $\nu\rightarrow\infty$. Relative compactness and weak convergence of finite dimensional distributions of $\{\mathds{Z}_n:\nu\geq 1\}$ imply that $\mathds{Z}_n\xrightarrow{\mathcal{L}}\mathds{Z}$ as $\nu\rightarrow\infty$ in $(\tilde{D}[0,1],\tilde{\mathcal{D}})$ with respect to the sup norm metric, for $Q(s,\omega)$ under $\textbf{P}^*$, where $\mathds{Z}$ has mean $0$ and covariance kernel  
\begin{align} 
\begin{split}
& \lim_{\nu\rightarrow\infty}E_{\textbf{P}}\bigg(n N^{-2}\sum_{i=1}^N (\mathds{1}_{[Z_i\leq t_1]}-F_{z,N}(t_1)-R(t_1)\pi_i)\times\\
&(\mathds{1}_{[Z_i\leq t_2]}-F_{z,N}(t_2)-R(t_2)\pi_i)(\pi_i^{-1}-1)\bigg),
\end{split}
 \end{align} 
with $R(t)$=$\sum_{i=1}^N (\mathds{1}_{[Z_i\leq t]}-F_{z,N}(t))(1-\pi_i)/\sum_{i=1}^N \pi_i(1-\pi_i)$. Moreover, it follows from Theorem $2.3.2$ in \cite{shorack2009empirical} that $\mathds{Z}$ has almost sure continuous paths. Next, note that $\sum_{i=1}^N\pi_i(1-\pi_i)\rightarrow\infty$ as $\nu\rightarrow\infty$ under $Q(s,\omega)$  \textit{a.s.} $[\textbf{P}]$ since $Q(s,\omega)$ satisfies Assumption $2$-\textsl{(ii)}, and Assumption $1$ holds. Then, it can be shown using Theorem $6.1$ in \cite{MR0178555} that under $Q(s,\omega)$, $var(\sum_{i\in s}(N\pi_i)^{-1})\rightarrow 0$ as $\nu\rightarrow\infty$ \textit{a.s.} $[\textbf{P}]$. Consequently, $\sum_{i\in s}(N\pi_i)^{-1}\xrightarrow{p} 1$ as $\nu\rightarrow\infty$ under $\textbf{P}^*$. Then, under $\textbf{P}^*$, $\mathds{U}_n$=$\mathds{Z}_n/\sum_{i\in s}(N\pi_i)^{-1}\xrightarrow{\mathcal{L}}\mathds{U}\overset{\mathcal{L}}{=}\mathds{Z}$ as $\nu\rightarrow\infty$ in $(\tilde{D}[0,1],\tilde{\mathcal{D}})$ with respect to the sup norm metric, for $Q(s,\omega)$. This further implies that under $\textbf{P}^*$, $\mathds{U}_n\xrightarrow{\mathcal{L}}\mathds{Z}$ as $\nu\rightarrow\infty$ in $(\tilde{D}[0,1],\tilde{\mathcal{D}})$ with respect to the Skorohod metric, for $Q(s,\omega)$.
\par

Now, it follows from Donsker theorem that under $\textbf{P}$, $\mathds{W}_N\xrightarrow{\mathcal{L}}\mathds{W}$ as $\nu\rightarrow\infty$ in $(\tilde{D}[0,1],\tilde{\mathcal{D}})$ with respect to the Skorohod metric, where $\mathds{W}$ is the standard Brownian bridge in $\tilde{D}[0,1]$ and has almost sure continuous paths. Hence, under $\textbf{P}^*$, both $\mathds{U}_n$ and $\mathds{W}_N$ are tight in $(\tilde{D}[0,1],\tilde{\mathcal{D}})$ with respect to the Skorohod metric by Theorem $5.2$ in \cite{billingsley2013convergence}. Then, it follows from Lemma B.$2$ in \cite{boistard2017} that under $\textbf{P}^*$, $\mathds{H}_n$=$\mathds{U}_n+\sqrt{n/N}\mathds{W}_N$ is tight in $(\tilde{D}[0,1],\tilde{\mathcal{D}})$ with respect to the Skorohod metric, for $d(i,s)$=$(N\pi_i)^{-1}$ and $Q(s,\omega)$ since Assumption $1$ holds. It also follows from $(iii)$ of Theorem $5.1$ in \cite{rubin2005two} that
\begin{align}\label{eq 45}
\begin{split}
&\textbf{c}\big(\mathds{U}_n(t_1)+\sqrt{n/N}\mathds{W}_N(t_1),\ldots, \mathds{U}_n(t_k)+\sqrt{n/N}\mathds{W}_N(t_k)\big)^T \xrightarrow{\mathcal{L}}\\
&N(0,\textbf{c}(\Gamma_3+\lambda\Gamma_4)\textbf{c}^T)
\end{split}
\end{align} 
 as $\nu\rightarrow\infty$ under $\textbf{P}^*$ for $k\geq 1$ and $\textbf{c}\neq 0$ because $\textbf{c}(\mathds{U}_n(t_1),\ldots, \mathds{U}_n(t_k))^T\xrightarrow{\mathcal{L}}N(0,\textbf{c}\Gamma_3\textbf{c}^T)$ as $\nu\rightarrow\infty$ under $Q(s,\omega)$ \textit{a.s.} $[\textbf{P}]$, and $\sqrt{n/N}\textbf{c}(\mathds{W}_N(t_1),\ldots, \mathds{W}_N(t_k))^T\xrightarrow{\mathcal{L}}N(0,\lambda \textbf{c}\Gamma_4\textbf{c}^T)$ as $\nu\rightarrow\infty$ under $\textbf{P}$. Here, $\Gamma_4$ is a $k\times k$ matrix such that 
\begin{equation}\label{eq 53}
((\Gamma_4))_{ij}=t_i\wedge t_j-t_it_j\text{ for }1\leq i<j\leq k.
\end{equation}  
Therefore, under $\textbf{P}^*$, $\mathds{H}_n\xrightarrow{\mathcal{L}}\mathds{H}$ in $(\tilde{D}[0,1],\tilde{\mathcal{D}})$ with respect to the Skorohod metric, for $d(i,s)$=$(N\pi_i)^{-1}$ and $Q(s,\omega)$, where $\mathds{H}$ is a mean $0$ Gaussian process with covariance kernel 
\begin{align}
\begin{split}
&\lim_{\nu\rightarrow\infty}E_{\textbf{P}}\big(n N^{-2}\sum_{i=1}^N (\mathds{1}_{[Z_i\leq t_1]}-F_{z,N}(t_1)-R(t_1)\pi_i)\times\\
&(\mathds{1}_{[Z_i\leq t_2]}-F_{z,N}(t_2)-R(t_2)\pi_i)(\pi_i^{-1}-1)\big)
+\lambda(t_1\wedge  t_2-t_1 t_2)\text{ for }t_1,t_2\in [0,1].
\end{split}
\end{align}
We can choose independent random functions, $\mathds{H}_1,\mathds{H}_2\in \tilde{D}[0,1]$ defined on some probability space such that $\mathds{H}_1\overset{\mathcal{L}}{=}\mathds{Z}$ and $\mathds{H}_2\overset{\mathcal{L}}{=}\mathds{W}$. $\mathds{H}_1$ and $\mathds{H}_2$ have almost sure continuous paths because $\mathds{Z}$ and $\mathds{W}$ have almost sure continuous paths. Hence, $\mathds{H}_1+\sqrt{\lambda}\mathds{H}_2$ has almost sure continuous paths. Next, note that $\mathds{H}_1$ and $\mathds{H}_2$ are mean $0$ Gaussian processes because $\mathds{Z}$ and $\mathds{W}$ are mean $0$ Gaussian processes. Thus $\mathds{H}_1+\sqrt{\lambda}\mathds{H}_2$ is a mean $0$ Gaussian process. Also, note that the covariance kernel of $\mathds{H}$ is the sum of covariance kernels of $\mathds{Z}$ and $\sqrt{\lambda}\mathds{W}$. Thus the covariance kernel of $\mathds{H}_1+\sqrt{\lambda}\mathds{H}_2$ is the same as that of $\mathds{H}$. Therefore, $\mathds{H}_1+\sqrt{\lambda}\mathds{H}_2\overset{\mathcal{L}}{=}\mathds{H}$. Hence, $\mathds{H}$ has almost sure continuous paths. Then, under $\textbf{P}^*$, $\mathds{H}_n\xrightarrow{\mathcal{L}}\mathds{H}$ in $(\tilde{D}[0,1],\tilde{\mathcal{D}})$ with respect to the sup norm metric, for $d(i,s)$=$(N\pi_i)^{-1}$ and $Q(s,\omega)$ by Skorohod representation theorem.
\par

Finally, we shall show that the conclusion of Proposition $3.1$ holds for the high entropy sampling design $P(s,\omega)$, which satisfies Assumption $2$. Note that for $d(i,s)$=$(N\pi_i)^{-1}$, $E_{\textbf{P}^{*}}(f(\mathds{H}_n))$= $E_{\textbf{P}}(\sum_{s\in\mathcal{S}}f(\mathds{H}_n) Q(s,\omega))\rightarrow \int f dP_{\mathds{H}}$ as $\nu\rightarrow\infty$ given any bounded continuous (with respect to the sup norm metric) $\tilde{\mathcal{D}}$-measurable function $f$, where $P_{\mathds{H}}$ is the probability distribution corresponding to $\mathds{H}$. Then, it follows from Lemmas $2$ and $3$ in \cite{MR1624693} that 
\begin{align}
\begin{split}
&\bigg|\sum_{s\in\mathcal{S}}f(\mathds{H}_n) (P(s,\omega)-Q(s,\omega))\bigg|\leq K_{2}\sum_{s\in \mathcal{S}}|P(s,\omega)-Q(s,\omega)|\leq K_{2}(2D(P||Q))^{1/2}\\
&\leq K_2(2D(P||R))^{1/2},
\end{split}
\end{align}
 for some constant $K_2>0$, where $R(s,\omega)$ is a rejective sampling design such that $D(P||R)\rightarrow 0$ as $\nu\rightarrow\infty$ \textit{a.s.} $[\textbf{P}]$. This implies that $E_{\textbf{P}}(\sum_{s\in\mathcal{S}}f(\mathds{H}_n) P(s,\omega))\rightarrow \int f dP_{\mathds{H}}$ as $\nu\rightarrow\infty$ for $d(i,s)$=$(N\pi_i)^{-1}$ by DCT, and hence, the conclusion of Proposition $3.1$ holds for the high entropy sampling design $P(s,\omega)$. 
\end{proof} 

\begin{proof}[Proof of Proposition $3.2$]
Recall $\tilde{\mathds{Z}}_n(t)$ from the paragraph preceding Lemma \ref{lem 5}. Then, using Lemmas \ref{lem 4} and \ref{lem 5}, it can be shown in the same way as in the first two paragraphs of the proof of Proposition $3.1$ that under $\textbf{P}^*$, $\tilde{\mathds{Z}}_n\xrightarrow{\mathcal{L}}\tilde{\mathds{Z}}$ as $\nu\rightarrow\infty$ in  $(\tilde{D}[0,1],\tilde{\mathcal{D}})$  with respect to the sup norm metric, for RHC sampling design, where $\tilde{\mathds{Z}}$ is a mean $0$ Gaussian process  in $\tilde{D}[0,1]$  with almost sure continuous paths. Moreover, the covariance kernel of $\tilde{\mathds{Z}}$ is 
$$
\lim_{\nu\rightarrow\infty}E_{\textbf{P}}\bigg(n \gamma(\overline{X}/N)\sum_{i=1}^N (\mathds{1}_{[Z_i\leq t_1]}-F_{z,N}(t_1))(\mathds{1}_{[Z_i\leq t_2]}-F_{z,N}(t_2))X_i^{-1}\bigg).
$$
 It can be shown that under RHC sampling design, 
$$
var\bigg(\sum_{i\in s}A_i/NX_i\bigg)=\gamma\sum_{i=1}^N(X_i-\overline{X})^2/NX_i\overline{X}=\gamma\bigg(\overline{X}\sum_{i=1}^N X_i/N-1\bigg) \rightarrow 0 
$$
 as $\nu\rightarrow\infty$ \textit{a.s.} $[\textbf{P}]$ since $n\gamma\rightarrow c>0$ by  Lemma S$1$ in \cite{deychaudhuri2023},  Assumption $4$ holds, and $\{N_r\}_{r=1}^n$ are as in ($6$) of the main text. Consequently, under $\textbf{P}^*$, $\sum_{i\in s}A_i/NX_i\xrightarrow{p} 1$ as $\nu\rightarrow\infty$. Therefore, under $\textbf{P}^*$, 
$$
\tilde{\mathds{U}}_n=\tilde{\mathds{Z}}_n\bigg/\sum_{i\in s}(A_i/NX_i)\xrightarrow{\mathcal{L}}\tilde{\mathds{U}}\overset{\mathcal{L}}{=}\tilde{\mathds{Z}}
$$ 
 as $\nu\rightarrow\infty$ in  $(\tilde{D}[0,1],\tilde{\mathcal{D}})$  with respect to the sup norm metric, for RHC sampling design. Next, note that 
$$
\mathds{H}_n=\big\{\sqrt{n}(\hat{F}_{z}(t)-t):t\in [0,1]\big\}=\tilde{\mathds{U}}_n+\sqrt{n/N}\mathds{W}_N,
$$
for $d(i,s)$=$A_i/NX_i$,  where $\mathds{W}_N$=$\{\sqrt{N}(F_{z,N}(t)-t):t\in [0,1]\}$. Also, note that under $\textbf{P}$, $\mathds{W}_N\xrightarrow{\mathcal{L}}\mathds{W}$ as $\nu\rightarrow\infty$ in  $(\tilde{D}[0,1],\tilde{\mathcal{D}})$  with respect to the Skorohod metric by Donsker theorem, where $\mathds{W}$ is the standard Brownian bridge. Therefore, using the same arguments as in the $3^{rd}$ paragraph of the proof of Proposition $3.1$, we can show that under $\textbf{P}^*$,  $\mathds{H}_n\xrightarrow{\mathcal{L}}  \mathds{H}$  as $\nu\rightarrow\infty$ in  $(\tilde{D}[0,1],\tilde{\mathcal{D}})$  with respect to the sup norm metric, for $d(i,s)$=$A_i/NX_i$ and RHC sampling design, where $\mathds{H}$ is a mean $0$ Gaussian process with covariance kernel 
\begin{align}\label{eq 18}
\begin{split}
&\lim_{\nu\rightarrow\infty}E_{\textbf{P}}\big(n \gamma(\overline{X}/N)\sum_{i=1}^N (\mathds{1}_{[Z_i\leq t_1]}-F_{z,N}(t_1))(\mathds{1}_{[Z_i\leq t_2]}-F_{z,N}(t_2))X_i^{-1}\big)+\\
&\lambda(t_1\wedge  t_2-t_1 t_2),
\end{split}
\end{align}
 for $t_1,t_2\in [0,1]$. Also, $\mathds{H}$ has almost sure continuous paths.  It can be shown using Lemma S$1$ in \cite{deychaudhuri2023}, Assumption $4$ and DCT that the expression in \eqref{eq 18} becomes
\begin{align*}
\begin{split}
&c E_{\textbf{P}}(X_i) E_{\textbf{P}}\bigg(\big(\mathds{1}_{[Z_i\leq t_1]}-\textbf{P}(Z_i\leq t_1)\big)\big(\mathds{1}_{[Z_i\leq t_2]}-\textbf{P}(Z_i\leq t_2)\big)X_i^{-1}\bigg)+\\
& \lambda(t_1\wedge  t_2-t_1 t_2),
\end{split}
\end{align*}
where $c$=$\lim_{\nu\rightarrow\infty}n\gamma$. 
\end{proof}
\begin{proof}[Proof of Theorem $3.2$] 
The proof follows in view of Proposition $3.2$ in the same way as the proof of Theorem $3.1$ follows in view of Proposition $3.1$.
\end{proof}
\begin{proof}[Proof of Proposition $4.1$]
 Let us denote the stratified multistage cluster sampling design by $P(s,\omega)$. 

\textsl{(i)} Recall $F_{y,H}$ from the paragraph preceding Assumption $10$ in the main text, and consider $\{Z_i\}_{i=1}^N$ as in  ($4$)  in the main text with $F_{y,H}$ replacing $F_y$. Also, recall $F_{z,N}(t)$ and $\mathds{Z}_n(t)$ from  \eqref{eq 37}. Note that $F_{z,N}(t)\rightarrow t$ as $\nu\rightarrow\infty$ \textit{a.s.} $[\textbf{P}]$ for any $t\in [0,1]$ by Assumption $8$ and SLLN. Therefore, using Lemmas \ref{lem s8} and \ref{lem s9}, one can show in the same way as in the first two paragraphs of the proof of  Proposition $3.1$ that under $\textbf{P}^*$,  
$$
\mathds{Z}_n\xrightarrow{\mathcal{L}}\mathds{Z}\text{ as }\nu\rightarrow\infty
$$
in $(\tilde{D}[0,1],\tilde{\mathcal{D}})$  with respect to the sup norm metric, for $P(s,\omega)$. Here, $\mathds{Z}$ is a mean $0$ Gaussian process in $\tilde{D}[0,1]$ with covariance kernel 
\begin{align} 
\begin{split}
&K_1(t_1,t_2)=\lambda\sum_{h=1}^H \Lambda_h(\Lambda_h/\lambda\lambda_h-1)E_{\textbf{P}}\big(\mathds{1}_{[Y_{hjl}^{\prime}\leq \tilde{Q}_{y,H}(t_1)]}-\textbf{P}(Y_{hjl}^{\prime}\leq \tilde{Q}_{y,H}(t_1))\big)\times\\
&\big(\mathds{1}_{[Y_{hjl}^{\prime}\leq \tilde{Q}_{y,H}(t_2)]}-\textbf{P}(Y_{hjl}^{\prime}\leq \tilde{Q}_{y,H}(t_2))\big)
\end{split}
\end{align}
 for $t_1,t_2\in [0,1]$. Here, $\tilde{Q}_{y,H}(p)$=$\inf\{t\in\mathds{R}:\tilde{F}_{y,H}(t)\geq p\}$, $\tilde{F}_{y,H}(t)$=$\sum_{h=1}^H \Lambda_h F_{y,h}(t)$, and $\lambda_h$'s and $\Lambda_h$'s are as in Assumption $6$. Moreover, $\mathds{Z}$ has almost sure continuous paths. Next, it can be shown using Assumption $6$ that $var(\sum_{i\in s}(N\pi_i)^{-1})$= $o(1)$, and hence $\sum_{i\in s}(N\pi_i)^{-1}\xrightarrow{p} 1$ as $\nu\rightarrow\infty$ under $P(s,\omega)$ for any given $\omega\in\Omega$. Here, $\pi_i$=$m_h r_h/ M_h N_{hj}$ when the $i^{th}$ population unit belongs to the $j^{th}$ cluster of stratum $h$. Therefore, it follows from DCT that under $\textbf{P}^*$, $\sum_{i\in s}(N\pi_i)^{-1}\xrightarrow{p} 1$, and hence under $\textbf{P}^*$, 
$$
\mathds{U}_n=\mathds{Z}_n\bigg/\sum_{i\in s}(N\pi_i)^{-1}\xrightarrow{\mathcal{L}}\mathds{U}\overset{\mathcal{L}}{=}\mathds{Z}
$$
as $\nu\rightarrow\infty$ in $(\tilde{D}[0,1],\tilde{\mathcal{D}})$  with respect to the sup norm metric, for the sampling design $P(s,\omega)$. 
\par
\vspace{.1cm}

Next, recall $\mathds{W}_N$ from the $1^{st}$ paragraph in the proof of  Proposition $3.1$.  Then, using assumptions Assumptions $6$ and $8$ in the main text, and Lemma \ref{lem s10} in this supplement, it can be shown that
 
\begin{align}\label{eq 9}
\begin{split}
&cov_{\textbf{P}}(\mathds{W}_N(t_1),\mathds{W}_N(t_2))=\sum_{h=1}^H(N_h/N) E_{\textbf{P}}\bigg(\mathds{1}_{[Y_{hjl}^{\prime}\leq Q_{y,H}(t_1)]}-\\
&\textbf{P}(Y_{hjl}^{\prime}\leq Q_{y,H}(t_1))\bigg)\bigg(\mathds{1}_{[Y_{hjl}^{\prime}\leq Q_{y,H}(t_2)]}-\textbf{P}(Y_{hjl}^{\prime}\leq Q_{y,H}(t_2))\bigg)\rightarrow \\
&\sum_{h=1}^H \Lambda_h E_{\textbf{P}}\bigg(\mathds{1}_{[Y_{hjl}^{\prime}\leq \tilde{Q}_{y,H}(t_1)]}-\textbf{P}(Y_{hjl}^{\prime}\leq  \tilde{Q}_{y,H}(t_1))\bigg)\bigg(\mathds{1}_{[Y_{hjl}^{\prime}\leq \tilde{Q}_{y,H}(t_2)]}-\\
&\textbf{P}(Y_{hjl}^{\prime} \leq\tilde{Q}_{y,H}( t_2))\bigg)=K_2(t_1,t_2)\text{ (say) }
\end{split}
\end{align}
 as $\nu\rightarrow\infty$ for any $t_1,t_2\in[0,1]$. Then, under $\textbf{P}$, $\mathds{W}_N\xrightarrow{\mathcal{L}}\mathds{W}$ as $\nu\rightarrow\infty$  in $(\tilde{D}[0,1],\tilde{\mathcal{D}})$  with respect to the Skorohod metric by \eqref{eq 9} above and Theorem $3.3.1$ in \cite{shorack2009empirical} (see p. $109$), where $\mathds{W}$ is a mean $0$ Gaussian process in $\tilde{D}[0,1]$ with covariance kernel $K_2(t_1,t_2)$. Also, $\mathds{W}$ has almost sure continuous paths. Therefore, using similar arguments as in the proof of  Proposition $3.1$,  we can say that under $\textbf{P}^*$, 
$$
\mathds{H}_n=\mathds{U}_n+\sqrt{n/N}\mathds{W}_N=\{\sqrt{n}(\hat{F}_{z}(t)-t):t\in [0,1]\}\xrightarrow{\mathcal{L}}\mathds{H}
$$
 as $\nu\rightarrow\infty$  in $(\tilde{D}[0,1],\tilde{\mathcal{D}})$   with respect to the sup norm metric, for $d(i,s)$=$(N\pi_i)^{-1}$ and $P(s,\omega)$, where $\mathds{H}$ is a mean $0$ Gaussian process in $\tilde{D}[0,1]$ with covariance kernel 
$$
K_1(t_1,t_2)+\lambda K_2(t_1,t_2).
$$
 Moreover, $\mathds{H}$ has almost sure continuous paths. This completes the proof of \textsl{(i)}.
\par
\vspace{.1cm}

\textsl{(ii)} Using Hoeffding's inequality (see \cite{serfling2009approximation}), and Assumptions $1$, $8$ and $9$, it can be shown that $F_{z,N}(t)\rightarrow t$ as $\nu\rightarrow\infty$ \textit{a.s.} $[\textbf{P}]$ for any $t\in [0,1]$. Therefore, using Lemmas \ref{lem s8} and \ref{lem s9}, and the Assumption $9$, one can show in the same way as in \textsl{(i)} that under $\textbf{P}^*$,  
$$
\mathds{U}_n\xrightarrow{\mathcal{L}}\mathds{Z}\text{ as }\nu\rightarrow\infty
$$
 in $(\tilde{D}[0,1],\tilde{\mathcal{D}})$  with respect to the sup norm metric, for $P(s,\omega)$, where $\mathds{Z}$ is a mean $0$ Gaussian process in $\tilde{D}[0,1]$ with covariance kernel 
\begin{align*}
\begin{split}
&K_1(t_1,t_2)=\lim_{\nu\rightarrow\infty}\lambda\sum_{h=1}^H N_h(N_h-n_h)E_{\textbf{P}}\big(\mathds{1}_{[Y_{hjl}^{\prime}\leq Q_{y,H}(t_1)]}-\textbf{P}(Y_{hjl}^{\prime}\leq Q_{y,H}(t_1))\big)\times\\
&\big(\mathds{1}_{[ Y_{hjl}^{\prime}\leq Q_{y,H}(t_2)]}-\textbf{P}( Y_{hjl}^{\prime}\leq Q_{y,H}(t_2))\big)/n_h N, 
\end{split}
\end{align*}
 for $t_1,t_2\in [0,1]$. Moreover, $\mathds{Z}$ has almost sure continuous paths. Next, given any $t_1,t_2\in[0,1]$, 
\begin{align*}
\begin{split}
&cov_{\textbf{P}}(\mathds{W}_N(t_1),\mathds{W}_N(t_2))=\sum_{h=1}^H (N_h/N)E_{\textbf{P}}\big(\mathds{1}_{[Y_{hjl}^{\prime}\leq Q_{y,H}(t_1)]}-\textbf{P}(Y_{hjl}^{\prime}\leq Q_{y,H}(t_1))\big)\times\\
&\big(\mathds{1}_{[Y_{hjl}^{\prime}\leq Q_{y,H}(t_2)]}-\textbf{P}(Y_{hjl}^{\prime}\leq Q_{y,H}(t_2))\big)\rightarrow K_2(t_1, t_2)
\end{split}
\end{align*}
  as $\nu\rightarrow\infty$ for some covariance kernel $K_2(t_1,t_2)$ by Assumption $10$. Then, under $\textbf{P}$, 
$$
\mathds{W}_N\xrightarrow{\mathcal{L}}\mathds{W}\text{ as }\nu\rightarrow\infty
$$
 in $(\tilde{D}[0,1],\tilde{\mathcal{D}})$   with respect to the Skorohod metric by Theorem $3.3.1$ in \cite{shorack2009empirical} (see p. $109$), where $\mathds{W}$ is a mean $0$ Gaussian process in $\tilde{D}[0,1]$ with covariance kernel $K_2(t_1,t_2)$. Also, $\mathds{W}$ has almost sure continuous paths. Therefore, using similar arguments as in the proof of Proposition $3.1$, we can say that under $\textbf{P}^*$, 
 $$
 \mathds{H}_n=\mathds{U}_n+\sqrt{n/N}\mathds{W}_N=\{\sqrt{n}(\hat{F}_{z}(t)-t):t\in [0,1]\}\xrightarrow{\mathcal{L}}\mathds{H}\text{ as }\nu\rightarrow\infty
 $$ 
 in $(\tilde{D}[0,1],\tilde{\mathcal{D}})$   with respect to the sup norm metric, for $d(i,s)$=$(N\pi_i)^{-1}$ and $P(s,\omega)$, where $\mathds{H}$ is a mean $0$ Gaussian process in $\tilde{D}[0,1]$ with almost sure continuous paths and p.d. covariance kernel 
 $$
 K_1(t_1,t_2)+\lambda K_2(t_1,t_2).
 $$
  This completes the proof of \textsl{(ii)}.
\end{proof}
\begin{proof}[Proof of Theorem $4.2$] 
The proof follows in view of Proposition $4.1$ in the same way as the proof of Theorem $3.1$ follows in view of Proposition $3.1$.
\end{proof}
\begin{proof}[Proof of Theorem $5.1$]  By conclusions of Theorems $3.1$ and $3.2$, and continuous mapping theorem, we have 
\begin{equation}
\int_{\alpha}^{\beta}\sqrt{n}(G(p)-Q_{y,N}(p))J(p)dp\xrightarrow{\mathcal{L}}\int_{\alpha}^{\beta}\mathds{Q}(p)J(p)dp\text{ as }\nu\rightarrow\infty
\end{equation}
 for high entropy and RHC sampling deigns under $\textbf{P}^*$. Note that $\mathds{Q}(p)J(p)$ is Riemann integrable on $[\alpha,\beta]$ implying $Z$=$\int_{\alpha}^{\beta}\mathds{Q}(p)J(p)dp$=$\lim_{m\rightarrow\infty}m^{-1}\sum_{i=0}^{m-1}$ $\mathds{Q}\big(\alpha+i(\beta-\alpha)/m\big)J\big(\alpha+i(\beta-\alpha)/m\big)$ under the aforementioned sampling designs. By DCT, we have 
\begin{align}
\begin{split} 
 &E(\exp(itZ))=\lim_{m\rightarrow\infty} \exp\bigg\{-m^{-2}\sum_{i=0}^{m-1}\sum_{j=0}^{m-1}K\big(\alpha+i(\beta-\alpha)/m, \alpha+j(\beta-\alpha)/m\big)\\
 &\times J\big(\alpha+i(\beta-\alpha)/m\big) J\big(\alpha+j(\beta-\alpha)/m\big)(t^2/2)\bigg\}
 \end{split}
 \end{align}
 since $\mathds{Q}$ is a mean $0$ Gaussian process in $D[\alpha,\beta]$ with covariance kernel $K(p_1,p_2)$. Note that $K(p_1,p_2)$ in the case of any high entropy sampling design (see  ($5$) in the main text ) is continuous on $[\alpha,\beta]\times[\alpha,\beta]$ by the assumption of this theorem, whereas $K(p_1,p_2)$ in the case of RHC sampling design (see ($7$) in the main text\color{black}) is continuous on $[\alpha,\beta]\times[\alpha,\beta]$ by Assumption $3$. Then, $E(\exp(itZ))$=$\exp\big(-t^2\int_{\alpha}^{\beta}\int_{\alpha}^{\beta}K(p_1,p_2)J(p_1)J(p_2)dp_1 dp_2/2\big)$ under the above sampling designs since $K(p_1,p_2)$ is continuous on $[\alpha,\beta]\times[\alpha,\beta]$, and hence Riemann integrable on $[\alpha,\beta]\times[\alpha,\beta]$. Therefore, 
\begin{equation}
\int_{\alpha}^{\beta}\mathds{Q}(p)J(p)dp\sim N(0,\sigma^2_1),\text{ where }\sigma^2_1=\int_{\alpha}^{\beta}\int_{\alpha}^{\beta}K(p_1,p_2)J(p_1)J(p_2)dp_1dp_2. 
\end{equation}
 Hence, under $\textbf{P}^*$, $\int_{\alpha}^{\beta}\sqrt{n}(G(p)-Q_{y,N}(p))J(p)dp\xrightarrow{\mathcal{L}}N(0,\sigma^2_1)$ as $\nu\rightarrow\infty$ for high entropy and RHC sampling deigns. 
\par
\vspace{.1cm}

Next, for any $k\geq 1$ and $p_1,\ldots,p_k\in[\alpha,\beta]$, we have 
\begin{equation}
\sqrt{n}(f(G(p_1),\ldots,G(p_k))-f(Q_{y,N}(p_1),\ldots,Q_{y,N}(p_k)))=a_N \sqrt{n}T_n+ \sqrt{n}\mathcal{\epsilon}(T_n)
\end{equation}
 by delta method, where $a_N$=$\nabla f(Q_{y,N}$ $(p_1),\ldots,Q_{y,N}(p_k))$, $T_n$=$G(p_1)-Q_{y,N}(p_1),\ldots,G(p_k)-Q_{y,N}(p_k)$, and $\mathcal{\epsilon}(T_n)\rightarrow 0$ as $T_n\rightarrow 0$. It follows from conclusions of Theorems $3.1$ and $3.2$ that under $\textbf{P}^*$ 
\begin{equation}
\sqrt{n}T_n\xrightarrow{\mathcal{L}}N_k(0,\Delta)\text{ as }\nu\rightarrow\infty
\end{equation}
 for high entropy and RHC sampling deigns, where $\Delta$ is a $k\times k$ matrix such that $((\Delta))_{uv}$= $K(p_u,p_v)$ for $1\leq u,v\leq k$. It can be shown that $Q_{y,N}(p)\rightarrow Q_y(p)$ as $\nu\rightarrow\infty$ \textit{a.s.} $[\textbf{P}]$ for any $p\in (0,1)$, when $\{(Y_i,X_i): 1\leq i \leq N\}$ are i.i.d. Thus $a_N\rightarrow a$ as $\nu\rightarrow\infty$ \textit{a.s.} $[\textbf{P}]$ for some $a$. Consequently, under $\textbf{P}^*$, $\sqrt{n}(f(G(p_1),\ldots,G(p_k))-f(Q_{y,N}(p_1),\ldots,Q_{y,N}(p_k)))\xrightarrow{\mathcal{L}} N(0,\sigma^2_2)$ as $\nu\rightarrow\infty$ for the aforesaid sampling designs, where $\sigma^2_2$=$a\Delta a^T$. This completes the proofs of \textsl{(i)} and \textsl{(ii)}.
\end{proof}
\begin{proof}[Proof of Theorem $5.2$]
It can be shown using Assumptions $1$, $6$ and $8$ in the main text, and Lemma \ref{lem s10} in this supplement that asymptotic covariance kernels of the quantile processes considered in this paper under stratified multistage cluster sampling design with SRSWOR (see  ($8$) in the main text\color{black}) are continuous on $[\alpha,\beta]\times[\alpha,\beta]$, when $H$ is fixed as $\nu\rightarrow\infty$. Moreover, by the assumption of this theorem, asymptotic covariance kernels of the aforementioned quantile processes are continuous on $[\alpha,\beta]\times[\alpha,\beta]$, when $H\rightarrow\infty$ as $\nu\rightarrow\infty$. Then, the asymptotic normality of $\int_{\alpha}^{\beta}\sqrt{n}(G(p)-Q_{y,N}(p))J(p)dp$ for the above sampling design under $\textbf{P}^*$ can be shown using similar arguments as in the $1^{st}$ paragraph of the proof of Theorem $5.1$. 
\par
\vspace{.1cm}

Next, if $H$ is fixed as $\nu\rightarrow\infty$, then it can be shown using Assumption $6$ that $Q_{y,N}(p)\rightarrow \tilde{Q}_{y,H}(p)$ as $\nu\rightarrow\infty$ \textit{a.s.} $[\textbf{P}]$ for any $p\in (0,1)$, where $\tilde{Q}_{y,H}(p)$=$\{t\in\mathds{R}:\tilde{F}_{y,H}(t)\geq p\}$, $\tilde{F}_{y,H}(t)$=$\sum_{h=1}^H\Lambda_h F_{y,h}(t)$, and $\Lambda_h$'s are as in Assumption $6$. Further, if $H\rightarrow\infty$ as $\nu\rightarrow\infty$, then it can be shown using Assumption $11$ that $Q_{y,N}(p)\rightarrow \tilde{Q}_y(p)$ as $\nu\rightarrow\infty$ \textit{a.s.} $[\textbf{P}]$ for any $p\in (0,1)$, where $\tilde{Q}_y(p)$=$\{t\in\mathds{R}:\tilde{F}_y(t)\geq p\}$, and $\tilde{F}_y$ is as in Assumption $11$. Thus $a_N\rightarrow a$ as $\nu\rightarrow\infty$ \textit{a.s.} $[\textbf{P}]$ for some $a$, where $a_N$ is as in the $2^{nd}$ paragraph of the proof of Theorem $5.1$. Then, given any $k\geq 1$ and $p_1,\ldots,p_k\in[\alpha,\beta]$, the the asymptotic normality of $\sqrt{n}(f(G(p_1),\ldots,G(p_k))-f(Q_{y,N}(p_1),\ldots,Q_{y,N}(p_k)))$ for the above sampling design under $\textbf{P}^*$ can be shown using similar arguments as in the $2^{nd}$ paragraph of the proof of Theorem $5.1$. This completes the proofs of \textsl{(i)} and \textsl{(ii)}.
\end{proof}
\begin{proof}[Proof of Theorem $5.3$]
\textsl{(i)} We shall prove this theorem using  \eqref{eq A13} in Lemma \ref{lem 8}.  Fix $\epsilon>0$, and suppose that 
$$
B_{\epsilon}(s,\omega)=\{p_1,p_2\in [\alpha,\beta]:|\hat{K}(p_1,p_2)-K(p_1,p_2)|\leq \epsilon\}\text{ for }s\in\mathcal{S}\text{ and }\omega\in \Omega.
$$
 Then, we have 
\begin{align*}
\begin{split}
&\int_{\alpha}^{\beta}\int_{\alpha}^{\beta}|(\hat{K}(p_1,p_2)-K(p_1,p_2))J(p_1)J(p_2)|dp_1 dp_2 \leq K(\iint_{B_{\epsilon}(s,\omega)}|\hat{K}(p_1,p_2)-\\
&K(p_1,p_2)|dp_1 dp_2+\iint_{(B_{\epsilon}(s,\omega))^c} |\hat{K}(p_1,p_2)-K(p_1,p_2)|dp_1 dp_2)\\
&\leq K(\epsilon(\beta-\alpha)^2+\iint_{(B_{\epsilon}(s,\omega))^c}|\hat{K}(p_1,p_2)-K(p_1,p_2)|dp_1 dp_2)
\end{split}
\end{align*}
 for some constant $K>0$ since $J$ is continuous on $[\alpha,\beta]$. Now, let $W_n$=$\sup_{p_1,p_2\in[\alpha,\beta]}|\hat{K}(p_1,p_2)$ $-K(p_1,p_2)|$. Then, 
\begin{align*}
\begin{split}
&\iint_{(B_{\epsilon}(s,\omega))^c}|\hat{K}(p_1,p_2) -K(p_1,p_2)|dp_1 dp_2\leq\\
& W_n\int_{\alpha}^{\beta}\int_{\alpha}^{\beta}\mathds{1}_{[(B_{\epsilon}(s,\omega))^c]}(p_1,p_2) dp_1 dp_2. 
\end{split}
\end{align*}
 Further, under a high entropy sampling design, 
\begin{align*}
\begin{split}
&E_{\textbf{P}^{*}}\bigg(\int_{\alpha}^{\beta}\int_{\alpha}^{\beta}\mathds{1}_{[(B_{\epsilon}(s,\omega))^c]}( p_1,p_2) dp_1 dp_2\bigg)=\int_{\alpha}^{\beta}\int_{\alpha}^{\beta} \textbf{P}^*\big(|\hat{K}(p_1,p_2)-K(p_1,p_2)|>\epsilon\big)dp_1 dp_2\\
&\rightarrow 0\text{ as }\nu\rightarrow\infty\text{ by DCT since }\hat{K}(p_1,p_2)\xrightarrow{p}K(p_1,p_2)\text{ as }\nu\rightarrow\infty 
\end{split}
\end{align*}
for any $p_1,p_2\in [\alpha,\beta]$ under $\textbf{P}^*$  by \eqref{eq A13} in Lemma \ref{lem 8}.  Therefore, under $\textbf{P}^*$, 
\begin{align*}
\begin{split}
&\int_{\alpha}^{\beta}\int_{\alpha}^{\beta}\mathds{1}_{[(B_{\epsilon}(s,\omega))^c]}(p_1,p_2)dp_1 dp_2\xrightarrow{p} 0,\text{ and }\\
&\iint_{(B_{\epsilon}(s,\omega))^c}|\hat{K}(p_1,p_2)-K(p_1,p_2)|dp_1 dp_2\xrightarrow{p} 0\text{ as }\nu\rightarrow\infty
\end{split}
\end{align*}
 for a high entropy sampling design because $W_n$=$O_p(1)$ as $\nu\rightarrow\infty$ by  by \eqref{eq A13} in Lemma \ref{lem 8}.  Hence, $\int_{\alpha}^{\beta}\int_{\alpha}^{\beta}|(\hat{K}(p_1,p_2)-K(p_1,p_2))$ $J(p_1)$ $J(p_2)|dp_1 dp_2\xrightarrow{p} 0$ as $\nu\rightarrow\infty$ under $\textbf{P}^*$. This completes the proof of the first part of \textsl{(i)}. The proof of the other part of \textsl{(i)} follows in a straight forward way. Also, the proof of \textsl{(ii)} follows exactly the same way as the proof of \textsl{(i)}.
\end{proof}
\begin{proof}[Proof of Theorem $5.4$]
The proof follows exactly the same way as the proof of Theorem $5.3$ in view of  Lemma \ref{lem 9} .
\end{proof} 
\begin{proof}[Proof of Theorem $6.1$]
 \textsl{(i)} Suppose that $\delta^2_1$, $\delta^2_2$, $\delta^2_3$ and $\delta^2_4$ are the asymptotic variances of the estimators of $\int_{\alpha}^{\beta}Q_{y,N}(p)J(p)dp$ based on $\hat{Q}_{y}(p)$, $\hat{Q}_{y,RA}(p)$, $\hat{Q}_{y,DI}(p)$ and $\hat{Q}_{y,REG}(p)$, respectively, under $P(s,\omega)$. Here, $P(s,\omega)$ is one of SRSWOR, RHC and any HE$\pi$PS sampling designs. It follows from Lemma \ref{lem s1} that Assumption $2$ holds under SRSWOR and any HE$\pi$PS sampling designs by the assumptions of Theorem $6.1$. Then, in view of Theorem $5.1$, we have
\begin{equation}
\delta^2_u=\int_{\alpha}^{\beta} \int_{\alpha}^{\beta}K_u(p_1,p_2)J(p_1)J(p_2)dp_1 dp_2 \text{ for }1\leq u\leq 4
\end{equation}
where $K_u(p_1,p_2)$'s are as in the paragraph preceding Theorem $6.1$ in the main article. Therefore, the conclusion of \textsl{(i)} in Theorem $6.1$ holds in a straightforward way. 
\par

\textsl{(ii)} The proof follows exactly the same way as the proof of \textsl{(i)}.
\end{proof}
\begin{proof}[Proof of Theorem $6.2$]
 \textsl{(i)} Suppose that $\eta^2_1$, $\eta^2_2$ and $\eta^2_3$ are the asymptotic variances of the estimators of $\int_{\alpha}^{\beta}Q_{y,N}(p)J(p)dp$ based on $G(p)$ under SRSWOR, RHC and any HE$\pi$PS sampling designs, respectively. Here, $G(p)$ is one of $\hat{Q}_{y}(p)$, $\hat{Q}_{y,RA}(p)$, $\hat{Q}_{y,DI}(p)$ and $\hat{Q}_{y,REG}(p)$. Then, in view of Theorem $5.1$, we have
\begin{equation}
\eta^2_u=\int_{\alpha}^{\beta} \int_{\alpha}^{\beta}K_u^*(p_1,p_2)J(p_1)J(p_2)dp_1 dp_2 \text{ for }1\leq u\leq 3
\end{equation}
where $K_u^*(p_1,p_2)$'s are as in the paragraph preceding Theorem $6.2$ in the main text. Therefore, the conclusion of \textsl{(i)} in Theorem $6.2$ holds in a straightforward way. 
\par

\textsl{(ii)} The proof follows exactly the same way as the proof of \textsl{(i)}.
\end{proof}
\begin{proof}[Proof of Theorem $6.3$]
 It follows from ($33$) in the proof of Theorem $3.1$ in the main article that under $\textbf{P}^*$
\begin{equation}\label{ceq 24}
\begin{split}
&\{\sqrt{n}(\hat{Q}_y(p)-Q_{y}(p)):p\in[\alpha,\beta]\}\xrightarrow{\mathcal{L}} -\tilde{\mathds{V}}_1/f_y\circ Q_y
\end{split}
\end{equation} 
as $\nu\rightarrow\infty$ in $(D[\alpha,\beta], \mathcal{D})$ with respect to the sup norm metric, for $d(i,s)$=$(N\pi_i)^{-1}$ and SRSWOR. Here, $Q_y$ and $f_y$ are superpopulation quantile and density functions of $y$, respectively, and $\tilde{\mathds{V}}_1$ is a mean $0$ Gaussian process in $D[\alpha,\beta]$ with covariance kernel
\begin{align}\label{eq 25}
\begin{split}
& K(p_1,p_2)=\lim_{\nu\rightarrow\infty}(1-n/N)E_{\textbf{P}}\bigg(\sum_{i=1}^N (\mathds{1}_{[Y_i\leq Q_y(p_1)]}-F_{y,N}(Q_y(p_1))\times\\
&(\mathds{1}_{[Y_i\leq Q_y(p_2)]}-F_{y,N}(Q_y(p_2))/N\bigg)+\lambda (p_1\wedge p_2-p_1 p_2)\\
&=p_1\wedge p_2-p_1 p_2\text{ for }p_1,p_2\in [\alpha,\beta].
\end{split}
 \end{align}
 The result in \eqref{ceq 24} implies that under $\textbf{P}^*$
\begin{equation}\label{eq 68}
\sqrt{n}(\hat{Q}_y(0.5)-Q_{y}(0.5))\xrightarrow{\mathcal{L}}N(0,\sigma^2_1)\text{ as }\nu\rightarrow\infty
\end{equation}
for $d(i,s)$=$(N\pi_i)^{-1}$ and SRSWOR, where $\sigma^2_1$=$1/4f_y^2(Q_y(0.5))$. Next, it can be shown using Theorems $1$ and $3$ in \cite{scott1981asymptotic} that under SRSWOR,
\begin{equation}
\sqrt{n}( \overline{y}-\overline{Y})\xrightarrow{\mathcal{L}}N(0,\sigma^2_2)\text{ and }\sqrt{n}(\hat{\overline{Y}}_{GREG}-\overline{Y})\xrightarrow{\mathcal{L}}N(0,\sigma^2_3)
\end{equation}
as $\nu\rightarrow\infty$ \textit{ a.s. }[$\textbf{P}$], where $\sigma^2_2$=$(1-\lambda)\sigma^2_y$, $\sigma^2_3$=$(1-\lambda)\sigma^2_y(1-\rho^2_{xy})$, $\sigma^2_{y}$ is the superpopulation variance of $y$, and $\rho_{xy}$ is the superpopulation correlation coefficient between $x$ and $y$. Further, it can be shown in the same way as the proof of the result in \eqref{eq 45} that under $\textbf{P}^*$,
\begin{equation}\label{eq 69}
\sqrt{n}( \overline{y}-E_{\textbf{P}}(Y_i))\xrightarrow{\mathcal{L}}N(0,\sigma^2_2+\lambda \sigma^2_y)\text{ and }\sqrt{n}(\hat{\overline{Y}}_{GREG}-E_{\textbf{P}}(Y_i))\xrightarrow{\mathcal{L}}N(0,\sigma^2_3+\lambda \sigma^2_y)
\end{equation}
as $\nu\rightarrow\infty$. Therefore, the conclusion of Theorem $6.3$ holds in a straightforward way in view of \eqref{eq 68} and \eqref{eq 69}. 
\end{proof} 
\section{Additional data analysis}\label{sec 6}
In this section, we compute and plot ratios of asymptotic and true MSEs for different parameters, estimators, sampling designs and sample sizes considered in Section $7$ of the main text. We first compute asymptotic mean squared errors (MSEs) of the estimators following the procedure described below. Recall from  ($5$) and ($7$) in Section $3$, and ($8$) in Section $4$ of the main text  the expressions of the asymptotic covariance kernels $K(p_1,p_2)$ of several quantile processes considered in this paper. Note that $K(p_1,p_2)$=$\lim_{\nu\rightarrow\infty}$ $E_{\textbf{P}}(\sigma_1(p_1,p_2))$ for $d(i,s)$=$(N\pi_i)^{-1}$ under high entropy sampling designs, $K(p_1,p_2)$=$\lim_{\nu\rightarrow\infty}$ $E_{\textbf{P}}(\sigma_2(p_1,p_2))$ for $d(i,s)$=$A_i/NX_i$ under RHC sampling design, and $K(p_1,p_2)$=$\lim_{\nu\rightarrow\infty}(n/N^2)\sum_{h=1}^H N_h (N_h-n_h)\sigma_h(p_1,p_2)/n_h$ for $d(i,s)$=$(N\pi_i)^{-1}$ under stratified multistage cluster sampling design with SRSWOR, where 
\begin{align}
\begin{split}
&\sigma_1(p_1,p_2)=(n/N^2)\sum_{i=1}^N(\zeta_i(p_1)-\overline{\zeta}(p_1)-S(p_1)\pi_i)(\zeta_i(p_2)-\overline{\zeta}(p_2)-S(p_2)\pi_i)\times \\
&(\pi_i^{-1}-1),\\
&\sigma_2(p_1,p_2)=(n\gamma)(\overline{X}/N)\sum_{i=1}^N (\zeta_i(p_1)-\overline{\zeta}(p_1))(\zeta_i(p_2)-\overline{\zeta}(p_2))X_i^{-1},\text{ and }\\
&\sigma_h(p_1,p_2)=E_{\textbf{P}}\big(\zeta_{hjl}^{\prime}(p_1)- E_{\textbf{P}}(\zeta_{hjl}^{\prime}(p_1))\big)\big(\zeta_{hjl}^{\prime}(p_2)-E_{\textbf{P}}(\zeta_{hjl}^{\prime}(p_2))\big)
\end{split}
\end{align} 
 for $h$=$1,\ldots,H$. Here, $\zeta_i(p)$'s, $\zeta_{hjl}^{\prime}(p)$'s, $\overline{\zeta}(p)$, $S(p)$ and $\gamma$ are as in  Sections $3$ and $4$ of the main article,  and $N_h$ and $n_h$ are as in the paragraph preceding Assumption $6$ in Section $4$ of the main paper.  Note that $\zeta_i(p)$'s in $\sigma_1(p_1,p_2)$ and $\sigma_2(p_1,p_2)$ involve various superpopulation parameters (see  Table $1$  in the main text). We approximate these superpopulation parameters  by their finite population versions as given in Table \ref{table 1} below. 
\begin{table}[h] 
\caption{Several superpopulation parameters and their finite population versions}
\label{table 1}
\renewcommand{\arraystretch}{1.4}
\begin{threeparttable}[b]
\centering
\begin{tabular}{cc} 
\hline
Superpopulation parameters& Finite population version\\
\hline
$E_{\textbf{P}}(X_i)$& $\overline{X}$\\
\hline
$E_{\textbf{P}}(Y_i)$ & $\overline{Y}$\\
\hline
$E_{\textbf{P}}(X_iY_i)$ & $\sum_{i=1}^N X_i Y_i/N$\\
\hline
$E_{\textbf{P}}(X_i^2)$& $\sum_{i=1}^N X_i^2/N$\\
\hline
$1/f_y(Q_y(p))$&\tnote{3} \ $\sqrt{N}(Q_{y,N}(p+1/\sqrt{N})-Q_{y,N}(p-1/\sqrt{N}))/2$\\
\hline
$1/f_x(Q_x(p))$&\tnote{3} \ $\sqrt{N}(Q_{x,N}(p+1/\sqrt{N})-Q_{x,N}(p-1/\sqrt{N}))/2$\\
\hline
\end{tabular}
\begin{tablenotes}
\item[3]  We approximate $1/f_y(Q_y(p))$ and $1/f_x(Q_x(p))$ following the ideas in \cite{MR1292550}.  
\end{tablenotes}
\end{threeparttable}
\end{table} 
Next, we approximate the superpopulation covariance $\sigma_h(p_1,p_2)$ between $\zeta_{hjl}^{\prime}(p_1)$ and $\zeta_{hjl}^{\prime}(p_2)$ by $\sum_{j=1}^{M_h}\sum_{l=1}^{N_{hj}} (\zeta_{hjl}^{\prime}(p_1)-\overline{\zeta}_h^{\prime}(p_1))(\zeta_{hjl}^{\prime}(p_2)-\overline{\zeta}_h^{\prime}(p_2))/N_h$, where $\overline{\zeta}_h^{\prime}(p)$=$\sum_{j=1}^{M_h}\sum_{l=1}^{N_{hj}} \zeta_{hjl}^{\prime}(p)/N_h$. Further, we approximate the superpopulation parameters involved in the expressions of $\zeta_{hjl}^{\prime}(p)$'s (see  Table ($3$)  in the main text) by their finite population versions as given in Table \ref{table 2} below. 
\begin{table}[h] 
\caption{Several superpopulation parameters and their finite population versions}
\label{table 2} 
\renewcommand{\arraystretch}{1.4}
\begin{threeparttable}[b]
\centering
\begin{tabular}{cc} 
\hline
Superpopulation parameters& Finite population version\\
\hline
$\sum_{h=1}^H (N_h/N)E_{\textbf{P}}(X_{hjl}^{\prime})$ (as well as $\Theta_1$)& $\overline{X}$\\
\hline
$\sum_{h=1}^H (N_h/N)E_{\textbf{P}}(Y_{hjl}^{\prime})$ (as well as $\Theta_2$) & $\overline{Y}$\\
\hline
$\sum_{h=1}^H (N_h/N)E_{\textbf{P}} (X_{hjl}^{\prime}Y_{hjl}^{\prime})$ (as well as $\Theta_3$) & $\sum_{i=1}^N X_i Y_i/N$\\
\hline
$\sum_{h=1}^H (N_h/N) E_{\textbf{P}}$ $(X_{hjl}^{\prime})^2$ (as well as $\Theta_4$)& $\sum_{i=1}^N X_i^2/N$\\
\hline
$1/f_{y,H}(Q_{y,H}(p))$ & $\sqrt{N}(Q_{y,N}(p+1/\sqrt{N})-Q_{y,N}(p-1/\sqrt{N}))/2$\\
\hline
$1/f_{x,H}(Q_{x,H}(p))$& $\sqrt{N}(Q_{x,N}(p+1/\sqrt{N})-Q_{x,N}(p-1/\sqrt{N}))/2$\\
\hline
\end{tabular}
\end{threeparttable}
\end{table} 
Let $\tilde{\sigma}_1(p_1,p_2)$, $\tilde{\sigma}_2(p_1,p_2)$ and $\tilde{\sigma}_h(p_1,p_2)$ denote the approximated $\sigma_1(p_1,p_2)$, $\sigma_2(p_1,p_2)$ and $\sigma_h(p_1,p_2)$, respectively. Then, asymptotic MSEs of several estimators of the parameters considered in Section $7$ of the main text are computed by replacing $K(p_1,p_2)$ in the expressions of $\sigma^2_1$ and $\sigma^2_2$ (see  Theorem $5.1$)  by $\tilde{\sigma}_1(p_1,p_2)/n$, $\tilde{\sigma}_2(p_1,p_2)/n$ and $(1/N^2)\sum_{h=1}^H N_h(N_h-n_h)\tilde{\sigma}_h(p_1,p_2)/n_h$. We approximate the double integral in the expression of $\sigma_1^2$ by sum after dividing $[\alpha,1-\alpha]$ into $100$ sub intervals of equal width. We also approximate $a$ in the expression of $\sigma_2^2$ by $\tilde{a}$=$\nabla f(Q_{y,N} (p_1),\ldots,Q_{y,N}(p_k))$.
\par

The true MSE of an estimator $\hat{\theta}$ of $\theta$ under a sampling design $P(s)$ is estimated as 
$$
MSE(\hat{\theta},P)=\sum_{k=1}^I(\hat{\theta}_{k}-\theta_0)^2/I,
$$
 where $\theta_0$ is the true value of $\theta$, and $\hat{\theta}_k$ is the estimate of $\theta$ computed based on the $k^{th}$ sample using the sampling design $P(s)$ for $k$=$1,\ldots,I$. The plots of ratios of asymptotic and true MSEs for different parameters, estimators, sampling designs and sample sizes are presented in Figures \ref{Fig 17}--\ref{Fig 20} and \ref{Fig 1}--\ref{Fig 4} in this supplement. Also, boxplots of ratios of asymptotic and true MSEs for different parameters and estimators in the cases of single stage sampling designs and stratified multistage cluster sampling design with SRSWOR are given in Figure \ref{Fig 34} in this supplement. 
\par
 
 Next, we empirically check whether the conditions ($18$) and ($19$) in the main text hold for different parameters and sampling designs in the case of the real dataset considered in this article. For this, we fix a sampling design among SRSWOR, RHC and Rao-Sampford (RS) sampling designs and a parameter among the median, the $\alpha$-trimmed means with $\alpha$=$0.1$ and $0.3$, the interquartile range and the Bowley's measure of skewness. Suppose that for the given parameter,  $\tilde{\delta}^2_1$, $\tilde{\delta}^2_2$, $\tilde{\delta}^2_3$ and $\tilde{\delta}^2_4$ are the asymptotic MSEs of the estimators constructed based on $\hat{Q}_{y}(p)$, $\hat{Q}_{y,RA}(p)$, $\hat{Q}_{y,DI}(p)$ and $\hat{Q}_{y,REG}(p)$, respectively, under the given sampling design. Then, we check whether the condition 
 \begin{equation}\label{eq 70}
 \max_{2\leq u\leq 4}\big(\tilde{\delta}^2_1-\tilde{\delta}^2_u\big)<0
 \end{equation}
 holds. Here, asymptotic MSEs are computed based on all the population values as described in the first paragraph of this section. Note that RS sampling design is a HE$\pi$PS sampling design. We consider RS sampling design as it is easier to implement than other HE$\pi$PS sampling designs. Next, we empirically check whether the conditions ($21$) and ($22$) in the main text hold for the above parameters and their estimators in the case of the real dataset. For this, we fix one of the aforementioned parameters and one of its estimators among the estimators constructed based on $\hat{Q}_{y}(p)$, $\hat{Q}_{y,RA}(p)$, $\hat{Q}_{y,DI}(p)$ and $\hat{Q}_{y,REG}(p)$. Suppose that for the given parameter,  $\tilde{\eta}^2_1$, $\tilde{\eta}^2_2$ and $\tilde{\eta}^2_3$ are the asymptotic MSEs of the given estimator under SRSWOR, RHC and RS sampling designs, respectively. Then, we check whether the condition 
 \begin{equation}\label{eq 71}
 \max_{2\leq u\leq 3}\big(\tilde{\eta}^2_1-\tilde{\eta}^2_u\big)<0
 \end{equation}
 holds. Finally, we empirically check whether the conditions ($23$) and ($24$) in the main text hold in the case of the real dataset. For this, we check whether the conditions
 \begin{equation}\label{eq 72}
S^2_{y}> N(Q_{y,N}(0.5+1/\sqrt{N})-Q_{y,N}(0.5-1/\sqrt{N}))^2\text{ and }
\end{equation}
\begin{equation}\label{eq 73}
r_{xy}^2> (1-n/N)^{-1}\big(1-N(Q_{y,N}(0.5+1/\sqrt{N})-Q_{y,N}(0.5-1/\sqrt{N}))^2/S^2_y \big)\
\end{equation}
hold. Here, $S^2_y$ is the finite population variance of $y$, and $r_{xy}$ is the finite population correlation coefficient between $x$ and $y$. We check the conditions in \eqref{eq 70}--\eqref{eq 73} for $n$=$200$.
\par

Furthermore, for each of the parameters and sampling designs considered in the preceding paragraph, we compare various estimators mentioned in the same paragraph in terms of true MSEs. For each of these parameters, we also compare the performance of each of its estimators under SRSWOR, RHC and RS sampling designs based on true MSEs. Moreover, we compare the performance of the sample mean, the sample median and the GREG estimator under SRSWOR in terms of true MSEs. As in the preceding paragraph, here also we carry out the aforementioned comparison for $n$=$200$.
\par

Next, for each of the parameters and sampling designs considered in the third paragraph of this section, we compare average lengths of nominal $90\%$ as well as $95\%$ confidence intervals constructed based on $\hat{Q}_{y}(p)$, $\hat{Q}_{y,RA}(p)$, $\hat{Q}_{y,DI}(p)$ and $\hat{Q}_{y,REG}(p)$. Also, for each of the above-mentioned parameters, we compare average  lengths of nominal $90\%$ as well as $95\%$ confidence intervals constructed based on each of its estimators under SRSWOR, RHC and RS sampling designs. We construct each of these confidence intervals $I$=$1000$ times based on $I$=$1000$ samples each of size $n$=$200$ in the same way as in Section $5.1$ of the main text, and compute its average length by taking the average of the lengths of these intervals. Finally, we assume that the real dataset considered in this article is generated from a superpopulation distribution with the same mean and median, and compare average lengths of nominal $90\%$ as well as $95\%$ confidence intervals of the superpopulation mean/median constructed based on the sample mean, the sample median and the GREG estimator under SRSWOR. While constructing these latter confidence intervals, we estimate the superpopulation parameters involved in the asymptotic variances (see the proof of Theorem $6.3$ in Section \ref{sec 5} of this supplement) of the above estimators  in the same way as in Table \ref{table 7} of this supplement. As in the preceding cases, here also we compute average lengths of the confidence intervals based on $I$=$1000$ samples each of size $n$=$200$. 
\par  

 The results obtained from the above data analysis are summarised as follows. 
\begin{itemize}
\item[(i)] It follows from Figures \ref{Fig 17}--\ref{Fig 20}, \ref{Fig 1}--\ref{Fig 4} and \ref{Fig 34} that  ratios of asymptotic and true MSEs for different parameters, estimators and sampling designs become closer to $1$  as the sample size increases from $n$=$200$ to $n$=$500$. Thus  the accuracy of the asymptotic approximations of the distributions of different estimators considered in this section increases as the sample size increases. 
\item[(ii)] The condition \eqref{eq 70} does not hold for any of the parameters and sampling designs considered in the third paragraph of this section. Thus for each of these parameters and sampling designs, the estimator based on one of $\hat{Q}_{y,RA}(p)$, $\hat{Q}_{y,DI}(p)$ and $\hat{Q}_{y,REG}(p)$ has the least asymptotic MSE (see Table \ref{table 4} below). We observe exactly the same scenario, when we compare the estimators in terms of true MSEs, and the confidence intervals based on different estimators in terms of average lengths (see Table \ref{table 4} below). This probably happens because of the approximate linear relationship between the study and the auxiliary variables through the origin (see the scatter plot and the regression line in Figure \ref{Fig 37} drawn based on all the population values).
\item[(iii)] The condition \eqref{eq 71} holds for all the parameters and their estimators considered in the third paragraph of this section except for those estimators of the $\alpha$-trimmed mean with $\alpha$=$0.1$, which are constructed based on $\hat{Q}_{y,DI}(p)$ and $\hat{Q}_{y,REG}(p)$. Thus  for all these parameters except the $\alpha$-trimmed mean with $\alpha$=$0.1$, the estimators based on $\hat{Q}_{y}(p)$, $\hat{Q}_{y,RA}(p)$, $\hat{Q}_{y,DI}(p)$ and $\hat{Q}_{y,REG}(p)$ have the least asymptotic MSEs under SRSWOR among SRSWOR, RS and RHC sampling designs (see Table \ref{table 5} below). For the $\alpha$-trimmed mean with $\alpha$=$0.1$, the estimators based on $\hat{Q}_{y}(p)$ and $\hat{Q}_{y,RA}(p)$ have the least asymptotic MSEs under SRSWOR, and the estimators based on $\hat{Q}_{y,DI}(p)$ and $\hat{Q}_{y,REG}(p)$ have the least asymptotic MSEs under RS sampling design (see Table \ref{table 5} below). We find exactly the same situation, when we compare the estimators in terms of true MSEs, and the confidence intervals based on different estimators in terms of average lengths (see Table \ref{table 5} below).
\item[(iv)] The conditions \eqref{eq 72} and \eqref{eq 73} holds in the case of the real dataset considered in this article. Furthermore, in terms of true MSEs, the GREG estimator performs better than the sample median under SRSWOR, and the sample median performs better than the sample mean under SRSWOR. Also, the confidence interval based on the GREG estimator has smaller average length than the confidence interval based on the sample median under SRSWOR, and the confidence interval based on the sample median has smaller average length than the confidence interval based on the sample mean under SRSWOR.
\item[(v)] The estimators having least asymptotic MSEs coincide with the estimators  having least true MSEs (see Tables \ref{table 4} and \ref{table 5}  below). Also, the estimators having least asymptotic and true MSEs coincide with the estimators based on which nominal $90\%$ as well as $95\%$ confidence intervals have the least average lengths. Thus theoretical and empirical results corroborate each other.
\end{itemize}  
\section{Results of data analysis} 
\subsection{Results of data analysis carried out in Section $7$ of the main article} The results obtained from the analysis described in Section  $7$  of the main article are summarized in this section. Here, we provide some plots that were mentioned in Section  $7$  of the main article. Figures \ref{Fig 21}--\ref{Fig 24} and \ref{Fig 5}--\ref{Fig 8} present plots of relative biases. Figures \ref{Fig 25}--\ref{Fig 32} and \ref{Fig 9}--\ref{Fig 16} present plots of observed coverage probabilities of nominal $90\%$ and $95\%$ confidence intervals. Figure \ref{Fig 33} presents boxplots of relative biases for different parameters and estimators in the cases of single stage sampling designs and stratified multistage cluster sampling design with SRSWOR. Figures \ref{Fig 35} and \ref{Fig 36} present boxplots of observed coverage probabilities of nominal $90\%$ and $95\%$ confidence intervals for different parameters and estimators in the cases of single stage sampling designs and stratified multistage cluster sampling design with SRSWOR.

\begin{figure}[h!]
\begin{center}
\includegraphics[height=9cm,width=14cm]{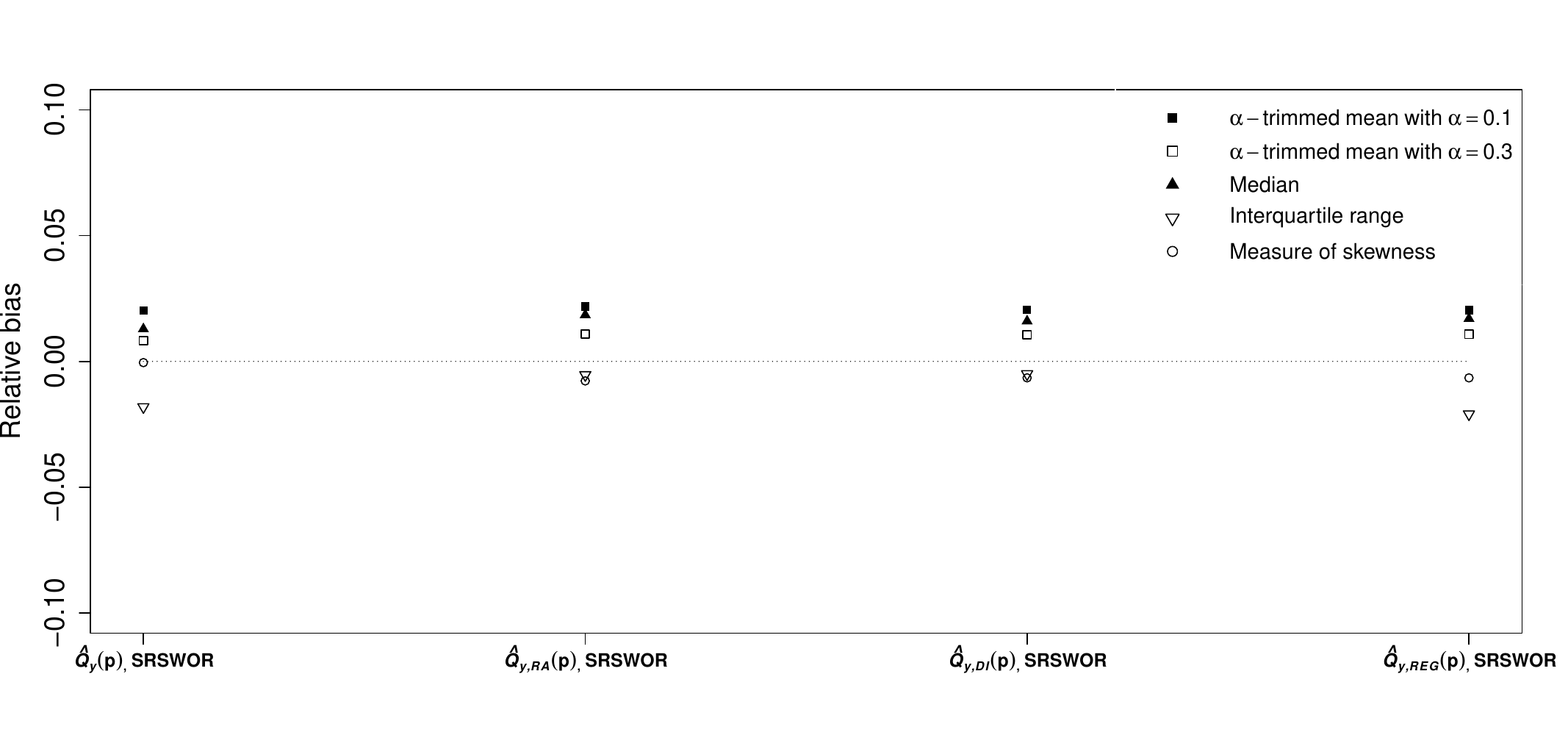}
\caption{Relative biases of different estimators for $n$=$500$ in the case of SRSWOR.}
\label{Fig 21}
\end{center}
\end{figure}

\begin{figure}[h!]
\begin{center}
\includegraphics[height=9cm,width=14cm]{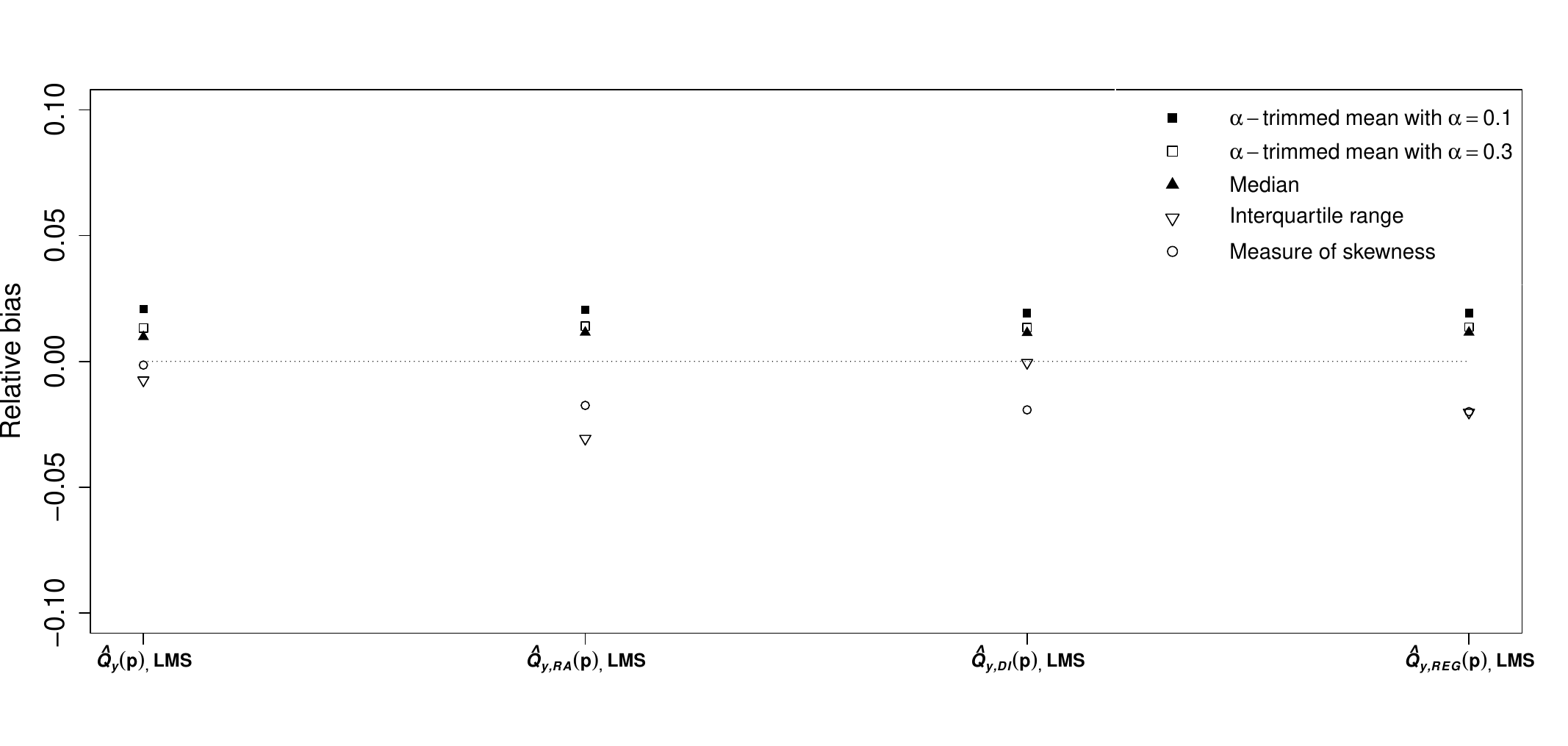}
\caption{Relative biases of different estimators for $n$=$500$ in the case of LMS sampling design.}
\label{Fig 22}
\end{center}
\end{figure}

\begin{figure}[h!]
\begin{center}
\includegraphics[height=9cm,width=14cm]{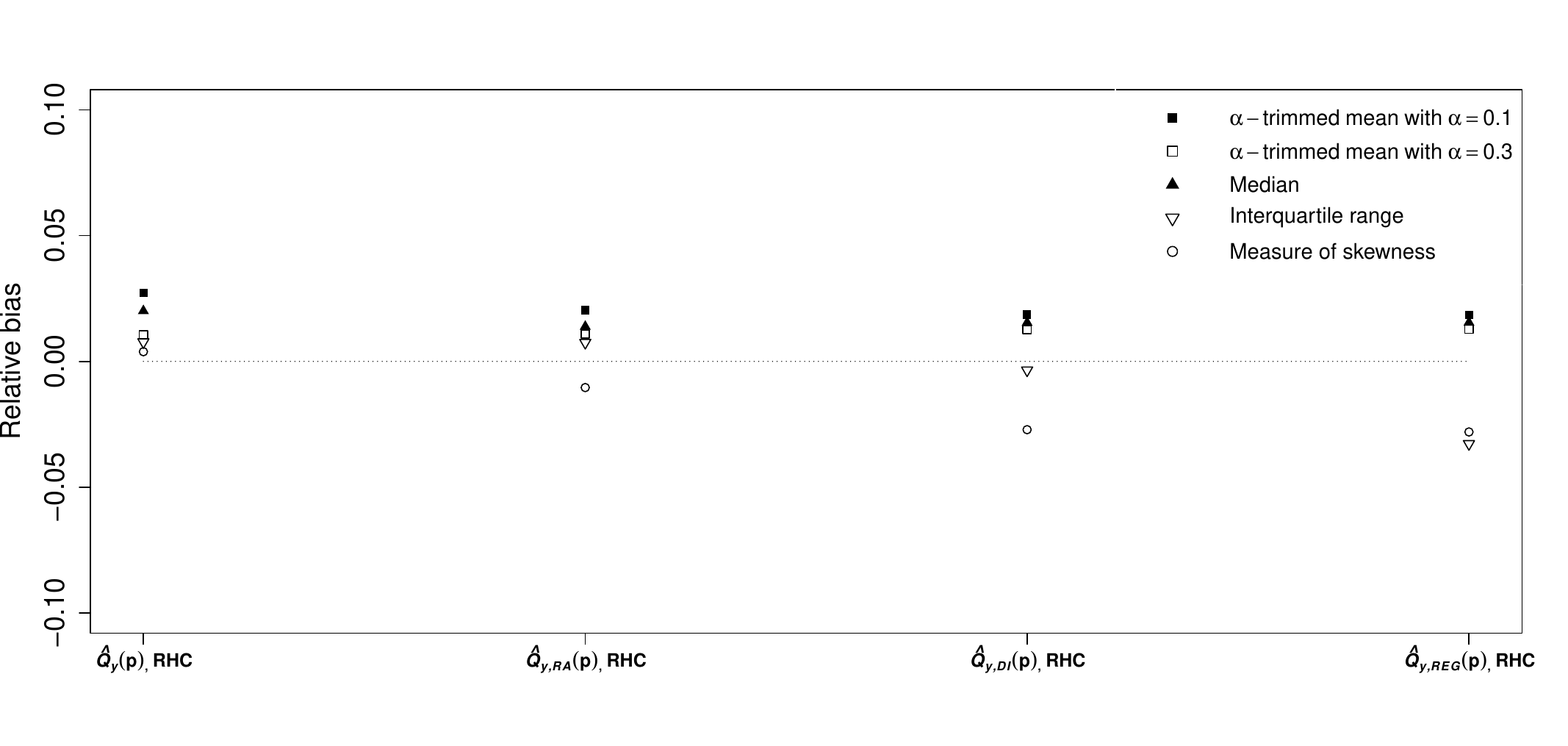}
\caption{Relative biases of different estimators for $n$=$500$ in the case of RHC sampling design.}
\label{Fig 23}
\end{center}
\end{figure}

\begin{figure}[h!]
\begin{center}
\includegraphics[height=9cm,width=14cm]{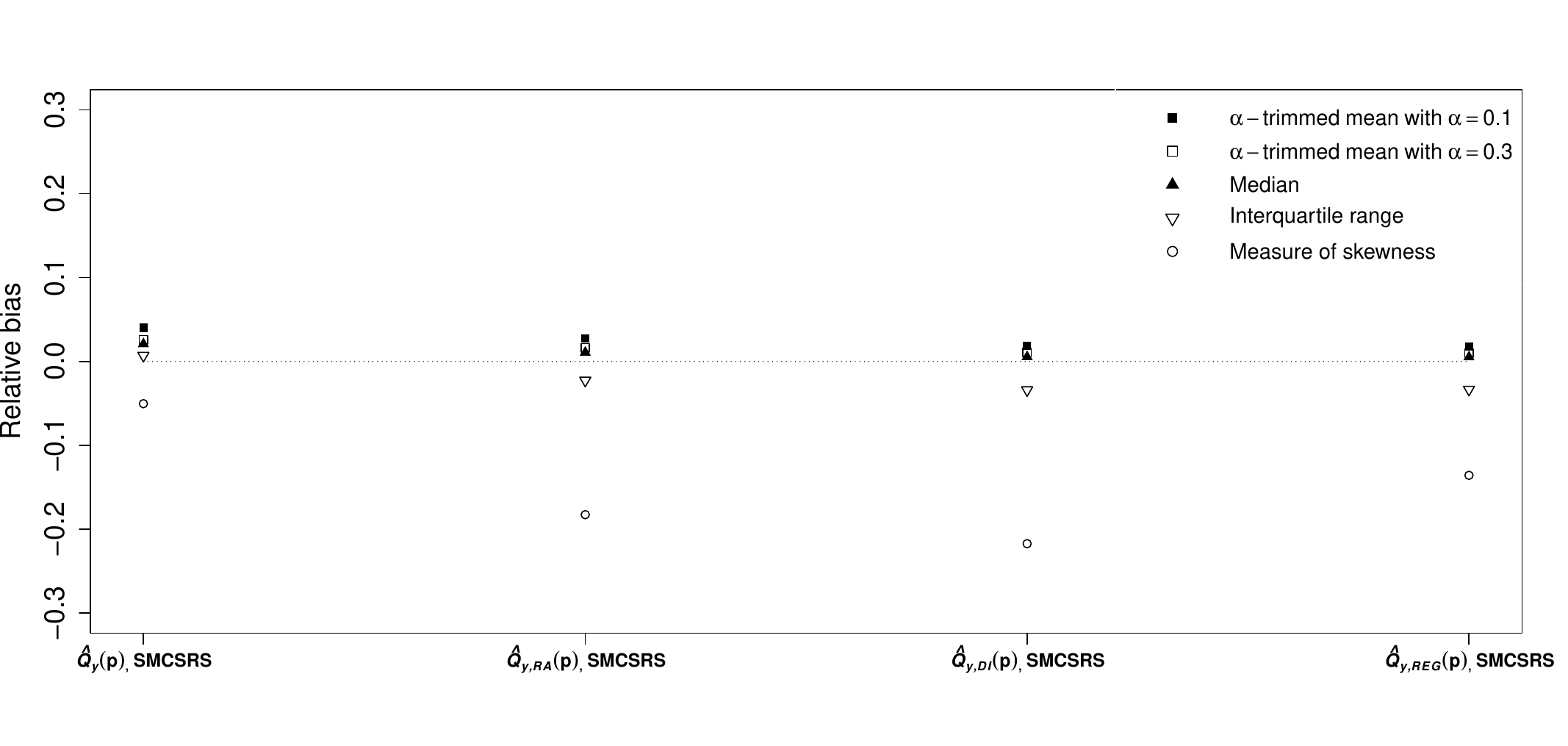}
\caption{Relative biases of different estimators for $n$=$216$ in the case of SMCSRS. In this figure, SMCSRS stands for stratified multistage cluster sampling design with SRSWOR.}
\label{Fig 24}
\end{center}
\end{figure}

\begin{figure}[h!]
\begin{center}
\includegraphics[height=8cm,width=14cm]{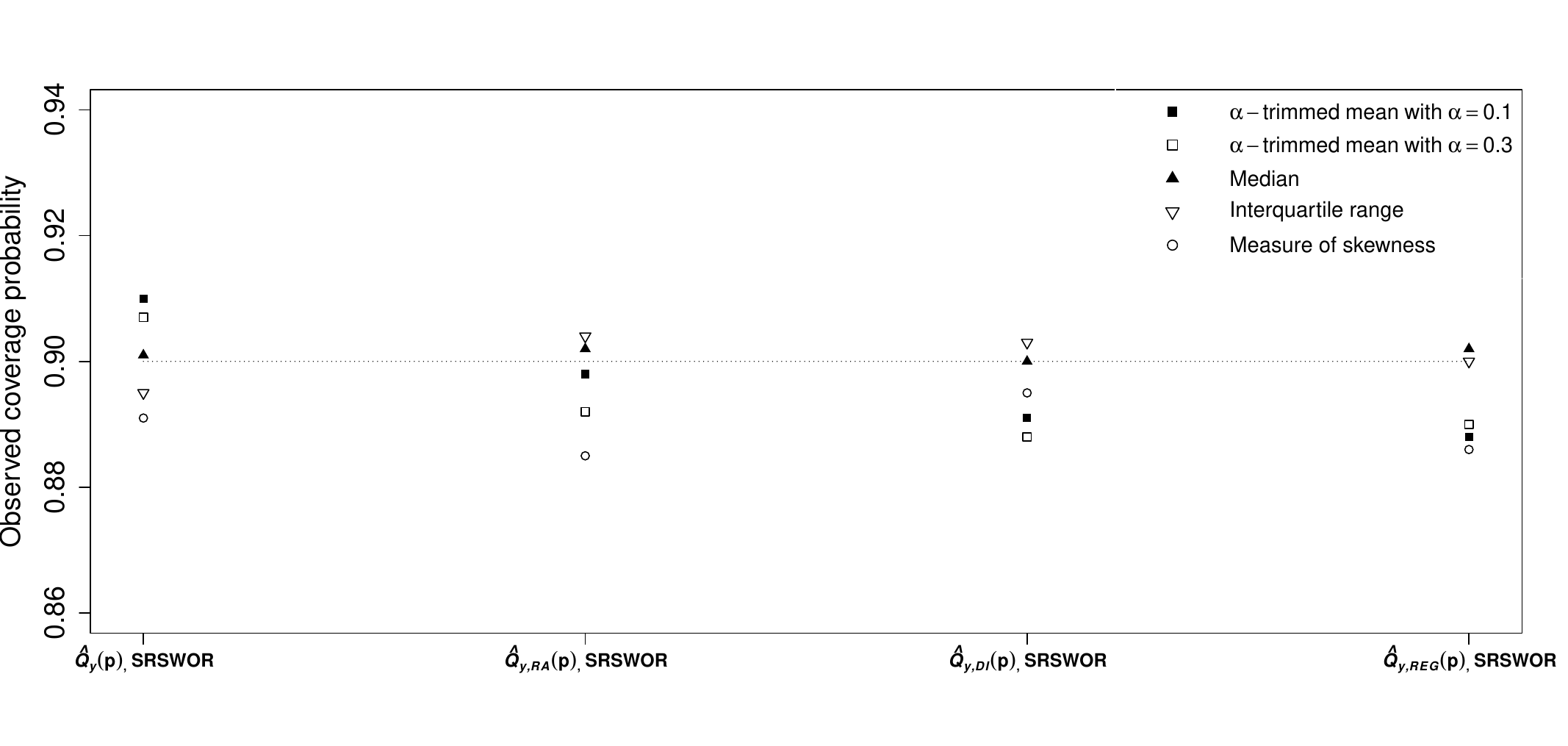}
\caption{Observed coverage probabilities of nominal $90 \%$ confidence intervals for $n$=$500$ in the case of SRSWOR  (the magnitude of the Monte Carlo standard error for observed coverage probabilities is $0.009$). }
\label{Fig 25}
\end{center}
\end{figure}

\begin{figure}[h!]
\begin{center}
\includegraphics[height=8cm,width=14cm]{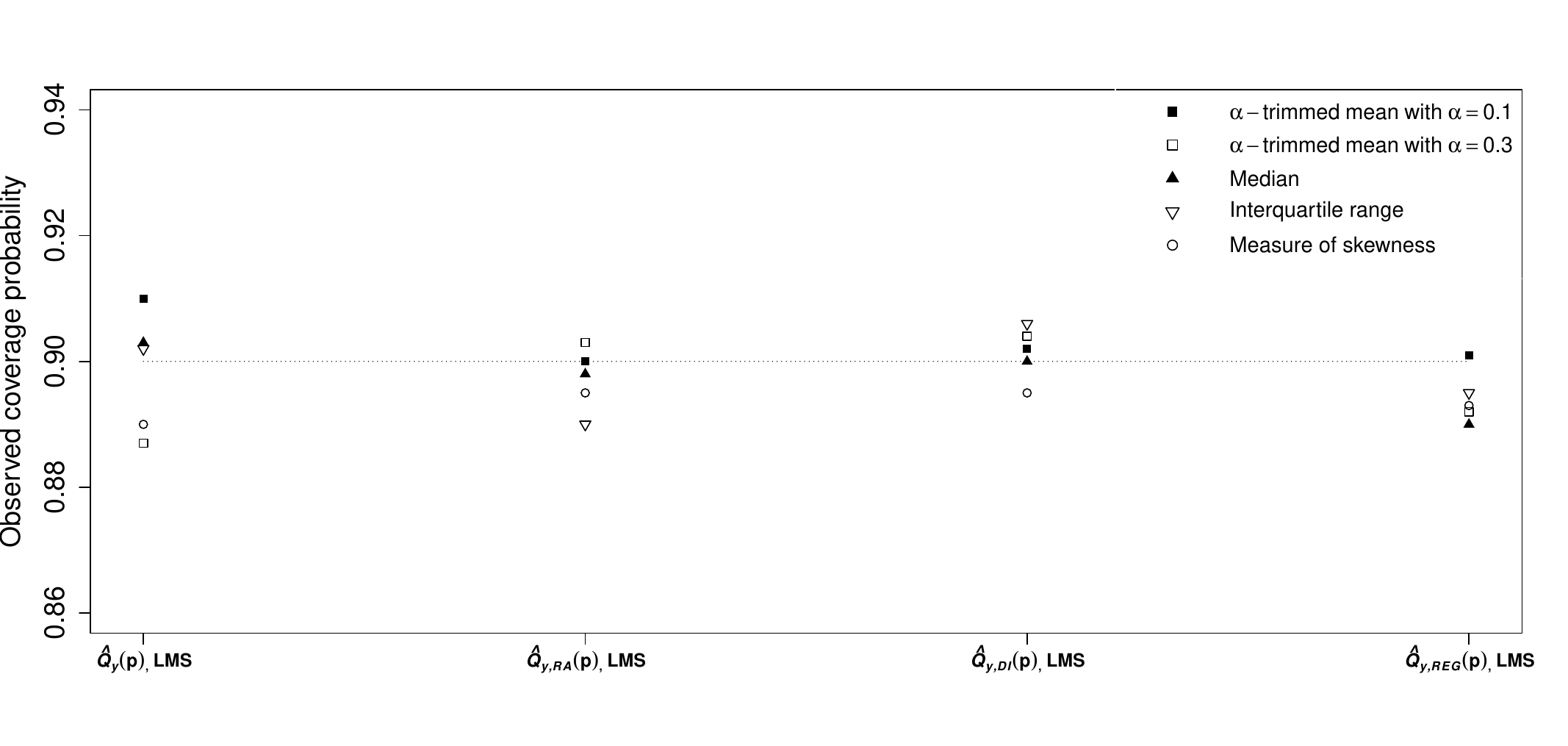}
\caption{Observed coverage probabilities of nominal $90 \%$ confidence intervals for $n$=$500$ in the case of LMS sampling design  (the magnitude of the Monte Carlo standard error for observed coverage probabilities is $0.009$). }
\label{Fig 26}
\end{center}
\end{figure}

\begin{figure}[h!]
\begin{center}
\includegraphics[height=8cm,width=14cm]{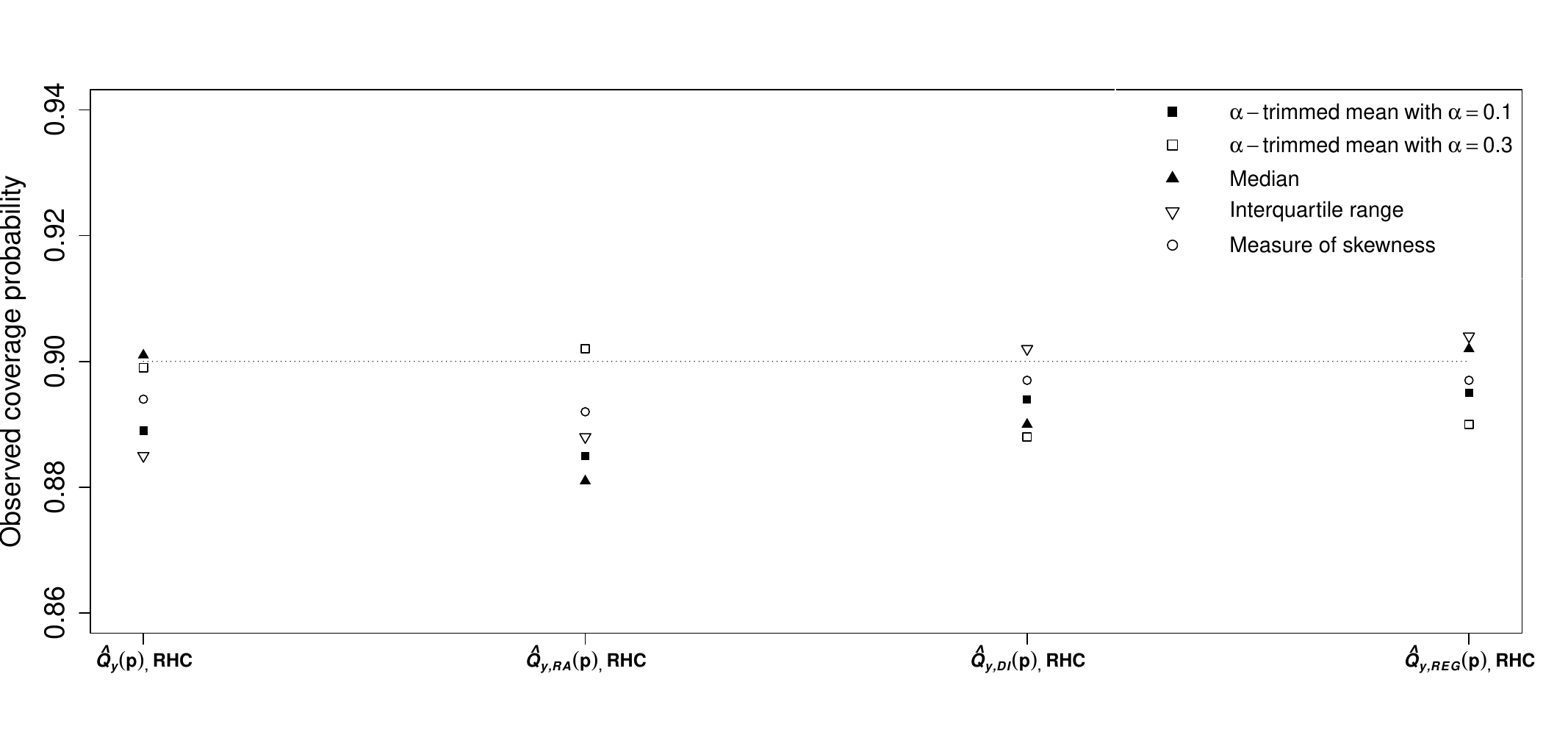}
\caption{Observed coverage probabilities of nominal $90 \%$ confidence intervals for $n$=$500$ in the case of RHC sampling design  (the magnitude of the Monte Carlo standard error for observed coverage probabilities is $0.009$). }
\label{Fig 27}
\end{center}
\end{figure}

\begin{figure}[h!]
\begin{center}
\includegraphics[height=8cm,width=14cm]{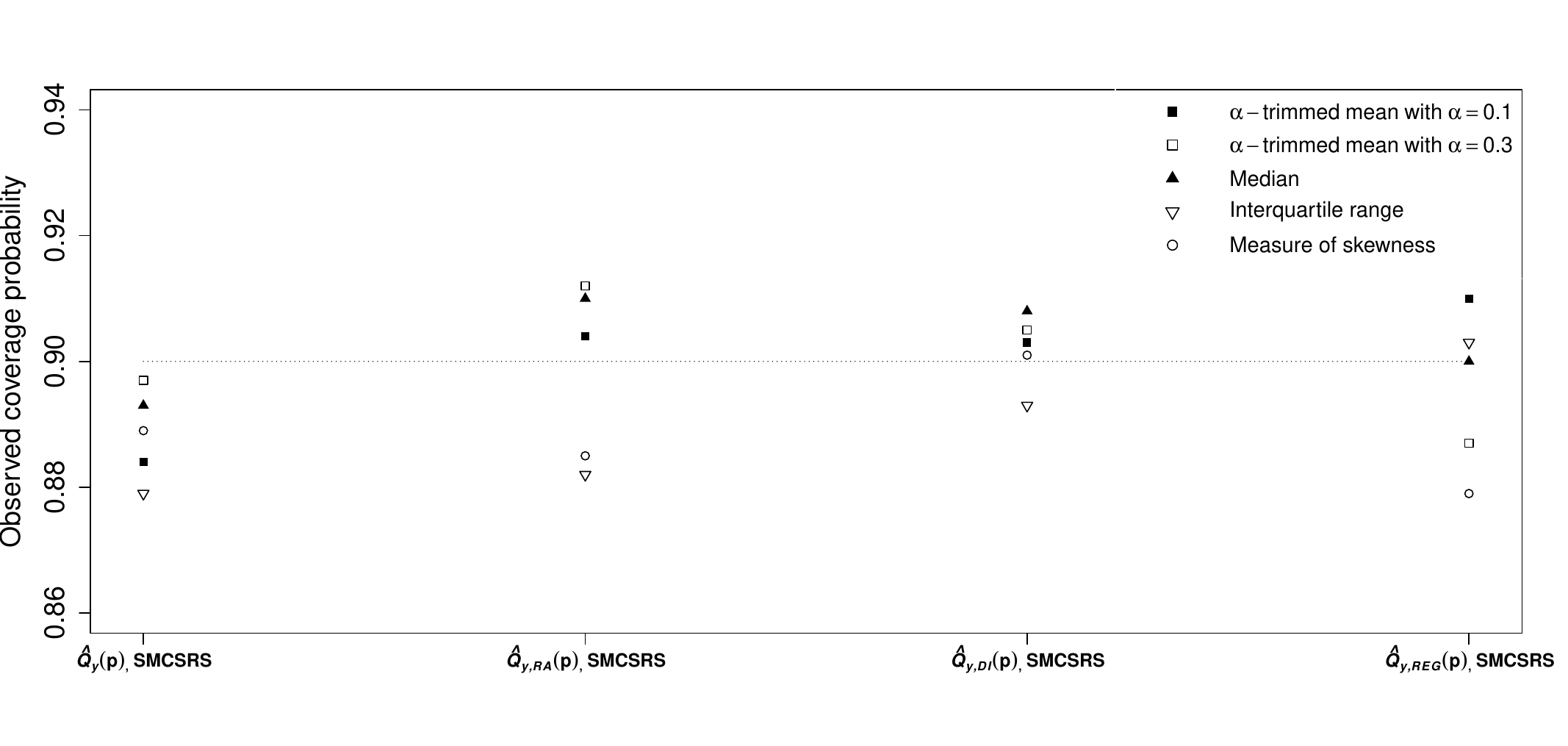}
\caption{Observed coverage probabilities of nominal $90 \%$ confidence intervals for $n$=$216$ in the case of SMCSRS  (the magnitude of the Monte Carlo standard error for observed coverage probabilities is $0.009$).  In this figure, SMCSRS stands for stratified multistage cluster sampling design with SRSWOR.}
\label{Fig 28}
\end{center}
\end{figure}

\begin{figure}[h!]
\begin{center}
\includegraphics[height=8cm,width=14cm]{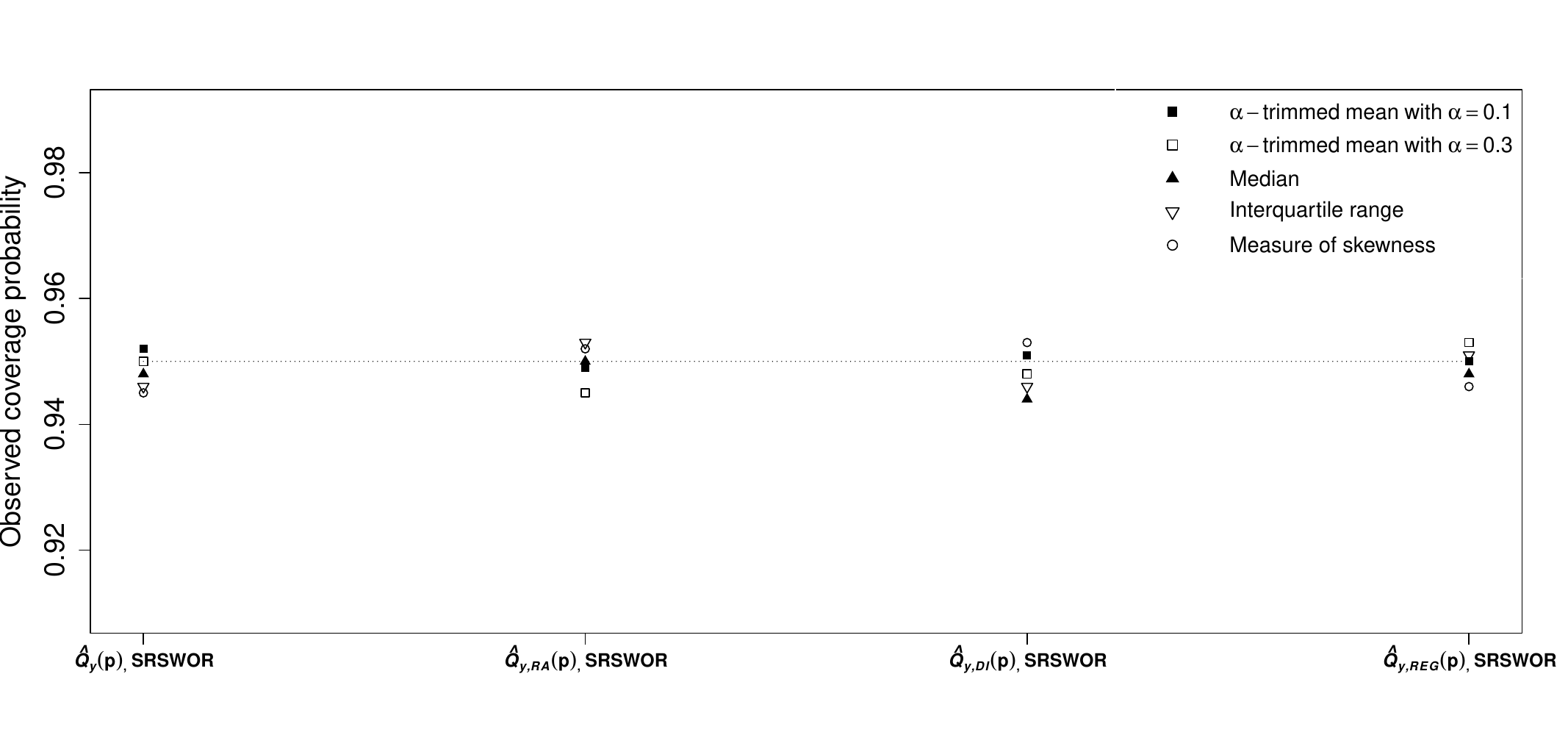}
\caption{Observed coverage probabilities of nominal $95 \%$ confidence intervals for $n$=$500$ in the case of SRSWOR  (the magnitude of the Monte Carlo standard error for observed coverage probabilities is $0.007$). }
\label{Fig 29}
\end{center}
\end{figure}

\begin{figure}[h!]
\begin{center}
\includegraphics[height=8cm,width=14cm]{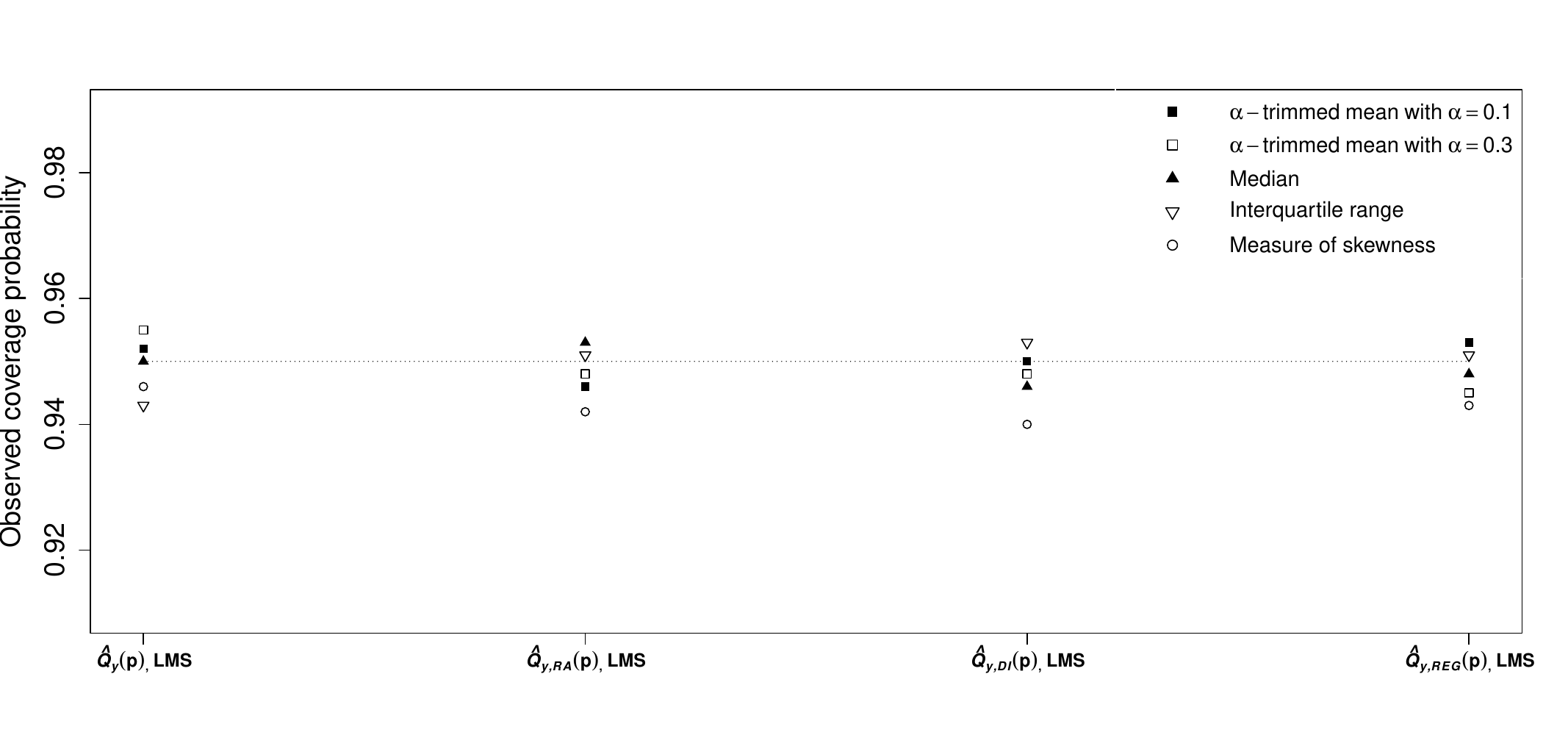}
\caption{Observed coverage probabilities of nominal $95 \%$ confidence intervals for $n$=$500$ in the case of LMS sampling design  (the magnitude of the Monte Carlo standard error for observed coverage probabilities is $0.007$). }
\label{Fig 30}
\end{center}
\end{figure}

\begin{figure}[h!]
\begin{center}
\includegraphics[height=8cm,width=14cm]{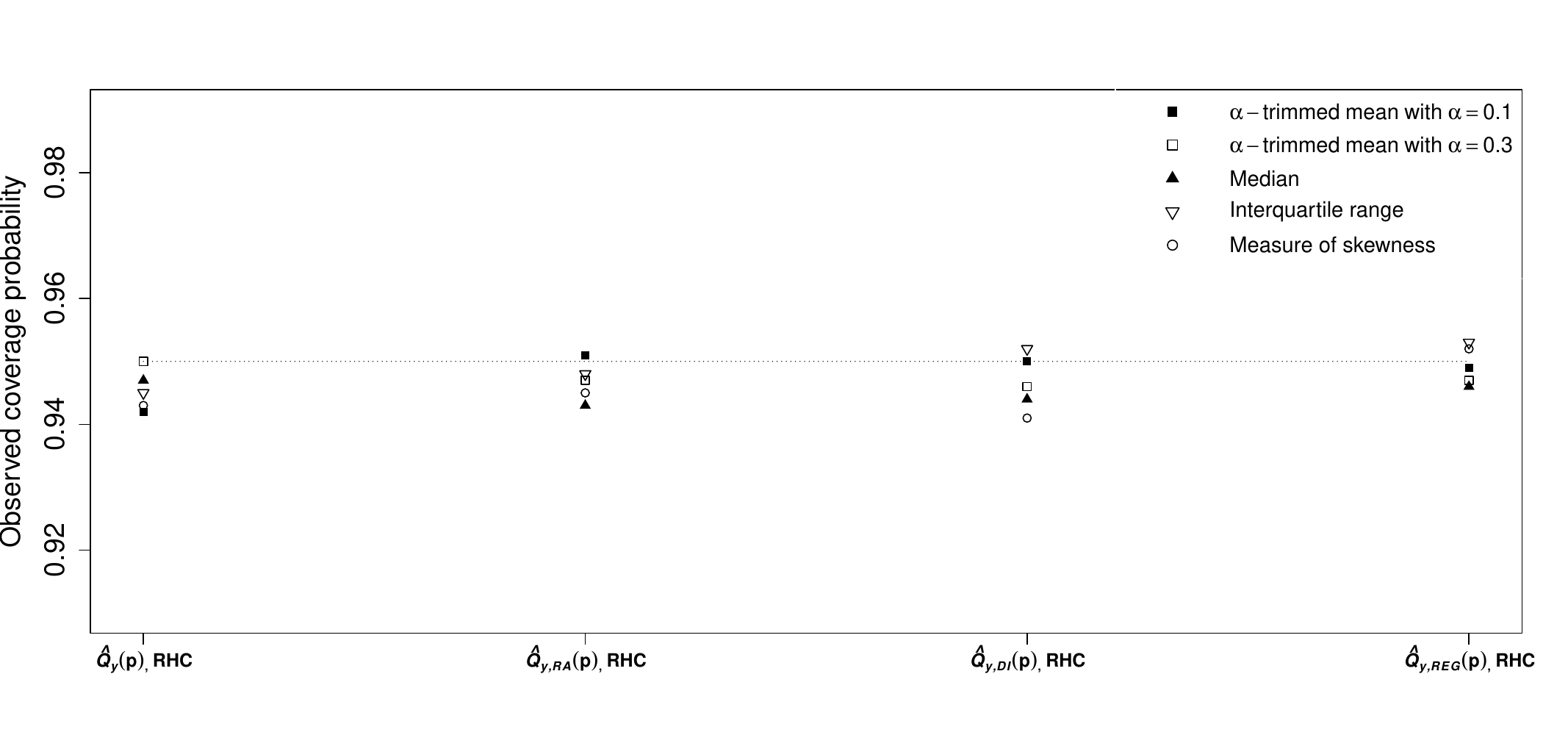}
\caption{Observed coverage probabilities of nominal $95 \%$ confidence intervals for $n$=$500$ in the case of RHC sampling design  (the magnitude of the Monte Carlo standard error for observed coverage probabilities is $0.007$). }
\label{Fig 31}
\end{center}
\end{figure}

\begin{figure}[h!]
\begin{center}
\includegraphics[height=8cm,width=14cm]{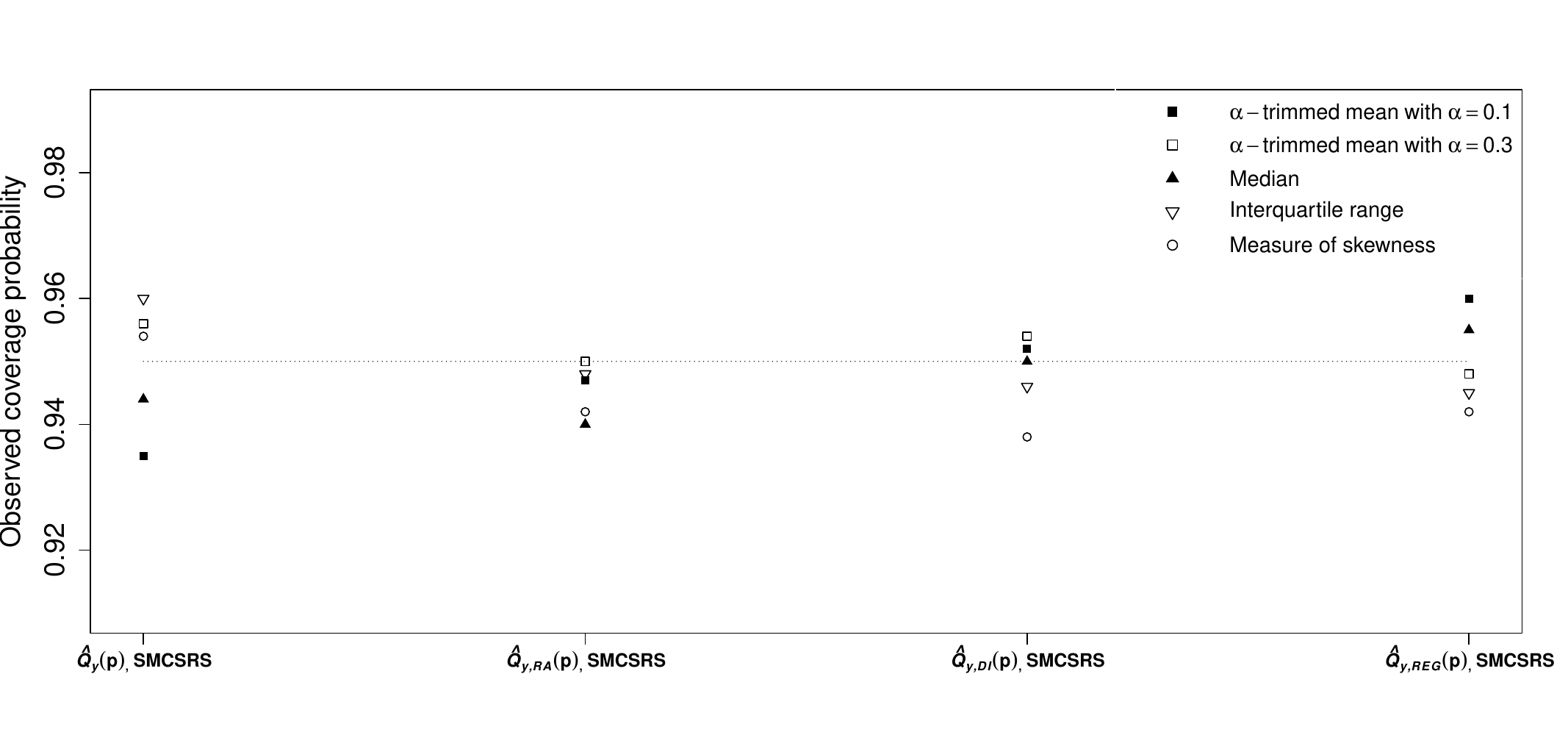}
\caption{Observed coverage probabilities of nominal $95 \%$ confidence intervals for $n$=$216$ in the case of SMCSRS  (the magnitude of the Monte Carlo standard error for observed coverage probabilities is $0.007$).  In this figure, SMCSRS stands for stratified multistage cluster sampling design with SRSWOR.}
\label{Fig 32}
\end{center}
\end{figure}

\begin{figure}[h!]
\begin{center}
\includegraphics[height=9cm,width=14cm]{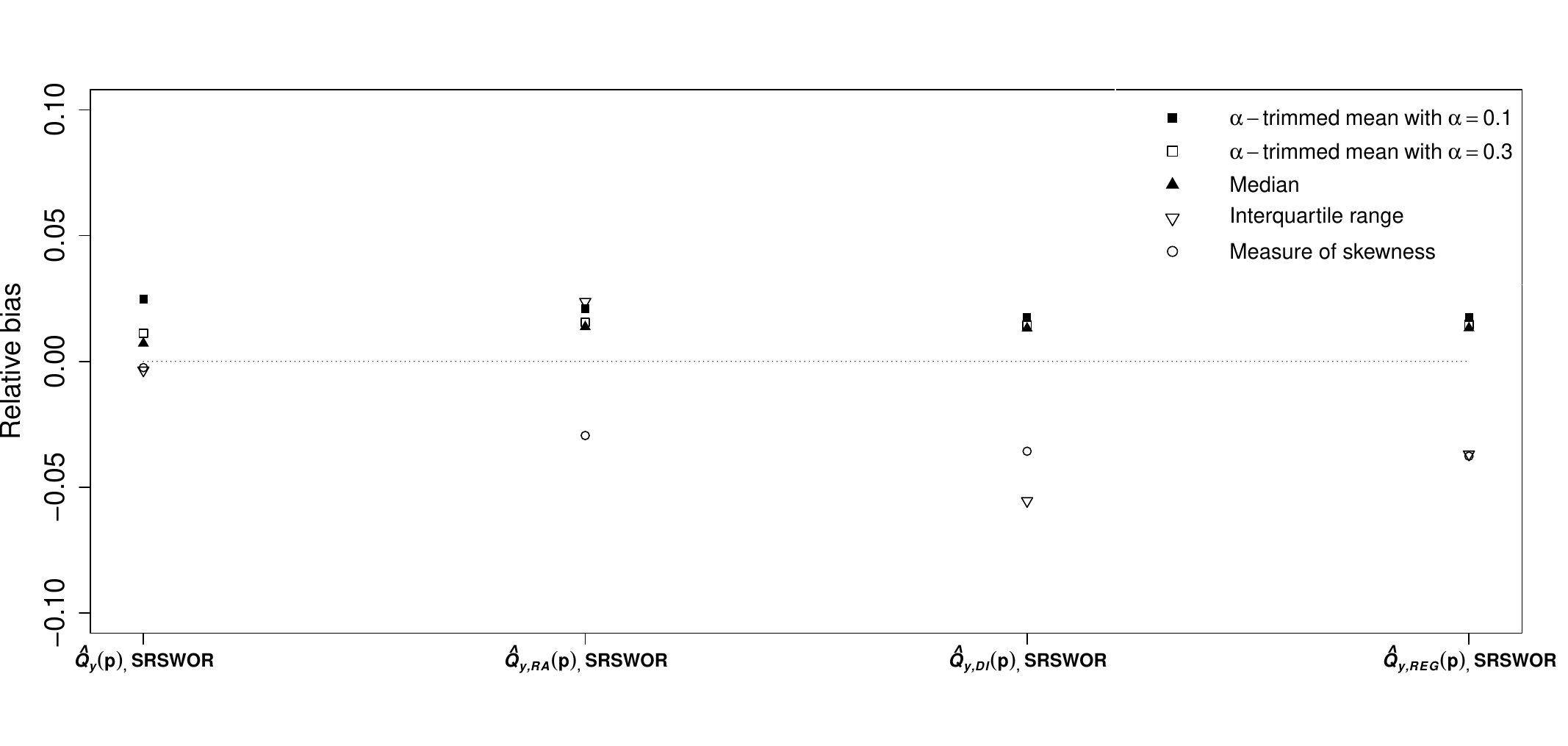}
\caption{Relative biases of different estimators for $n$=$200$ in the case of SRSWOR.}
\label{Fig 5}
\end{center}
\end{figure}

\begin{figure}[h!]
\begin{center}
\includegraphics[height=9cm,width=14cm]{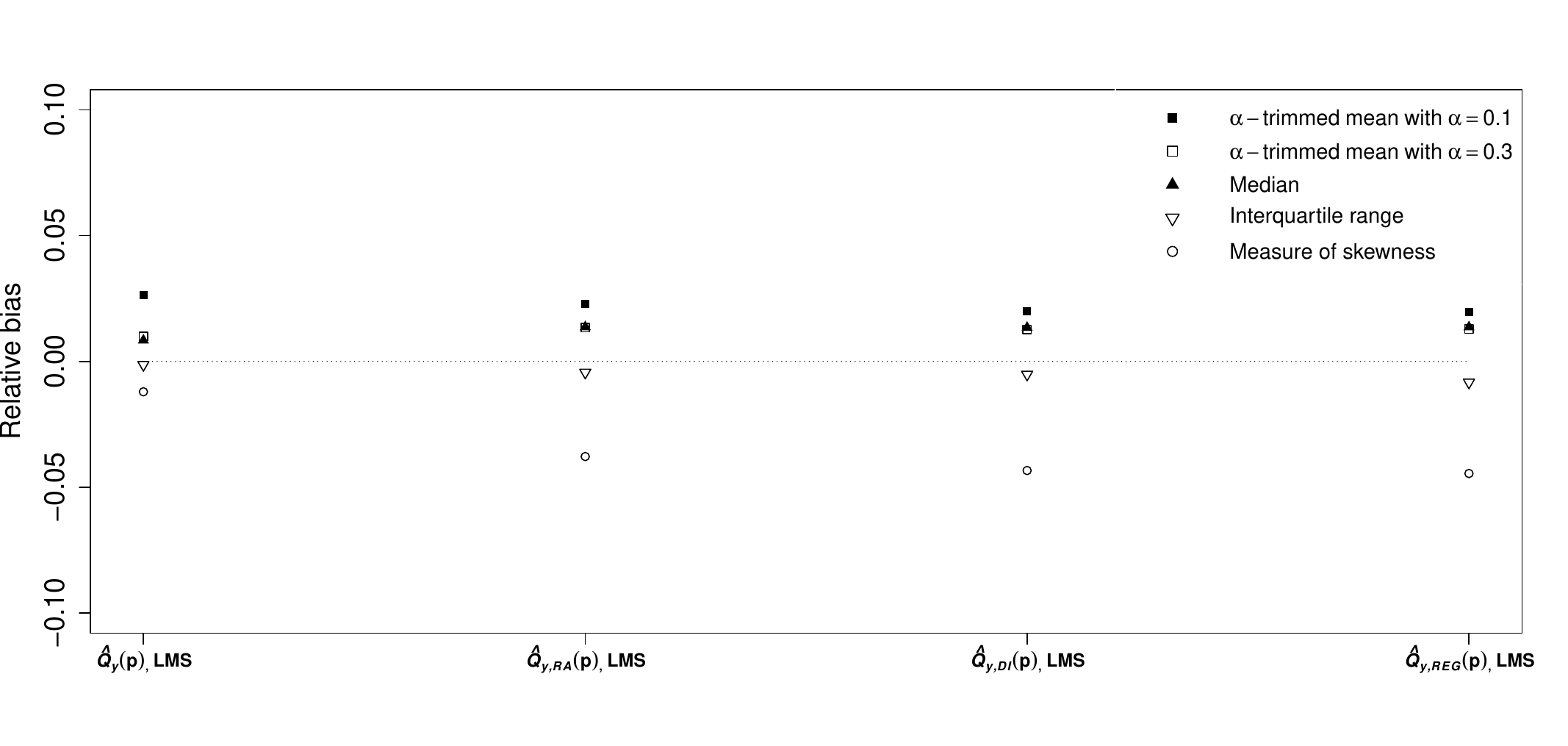}
\caption{Relative biases of different estimators for $n$=$200$ in the case of LMS sampling design.}
\label{Fig 6}
\end{center}
\end{figure}

\begin{figure}[h!]
\begin{center}
\includegraphics[height=9cm,width=14cm]{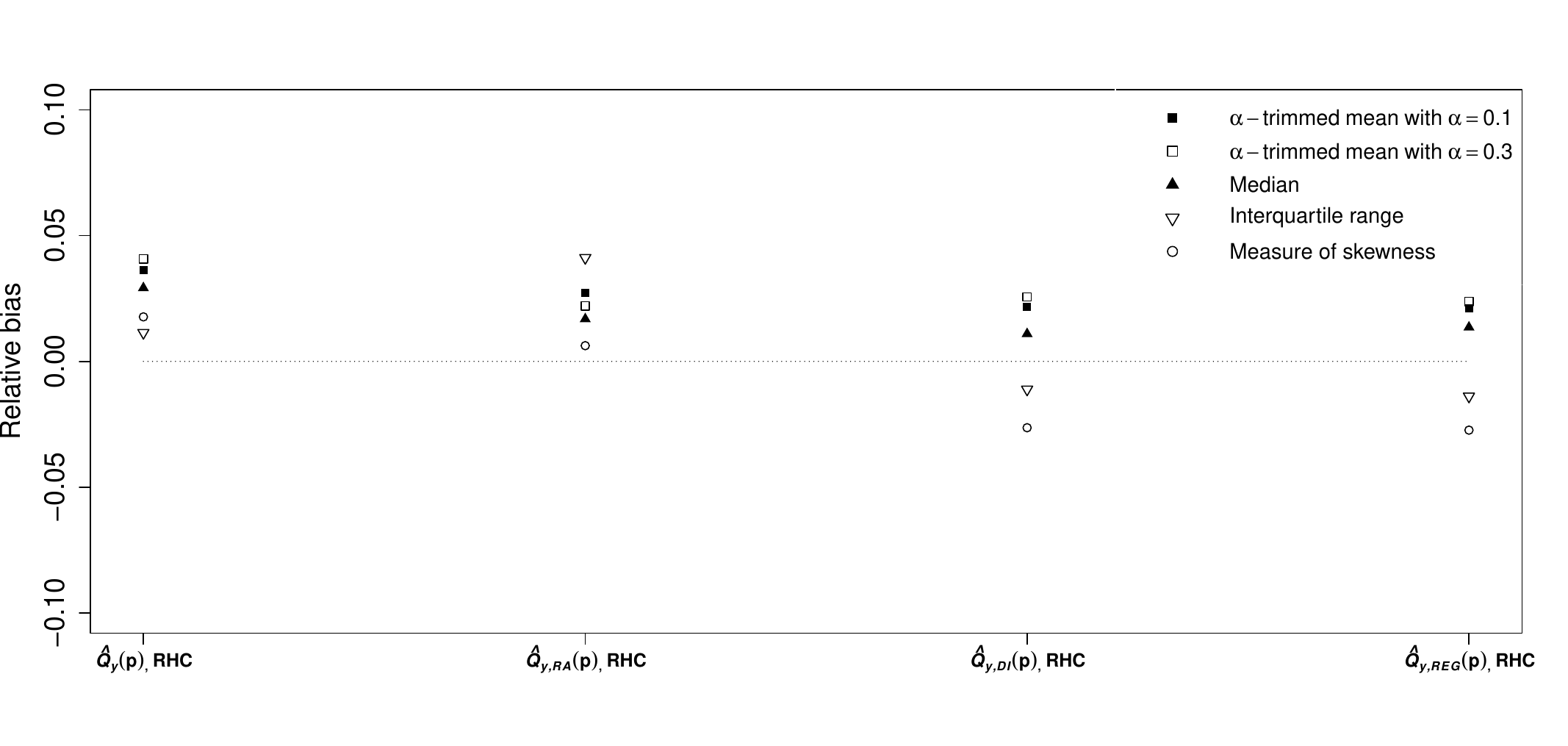}
\caption{Relative biases of different estimators for $n$=$200$ in the case of RHC sampling design.}
\label{Fig 7}
\end{center}
\end{figure}

\begin{figure}[h!]
\begin{center}
\includegraphics[height=9cm,width=14cm]{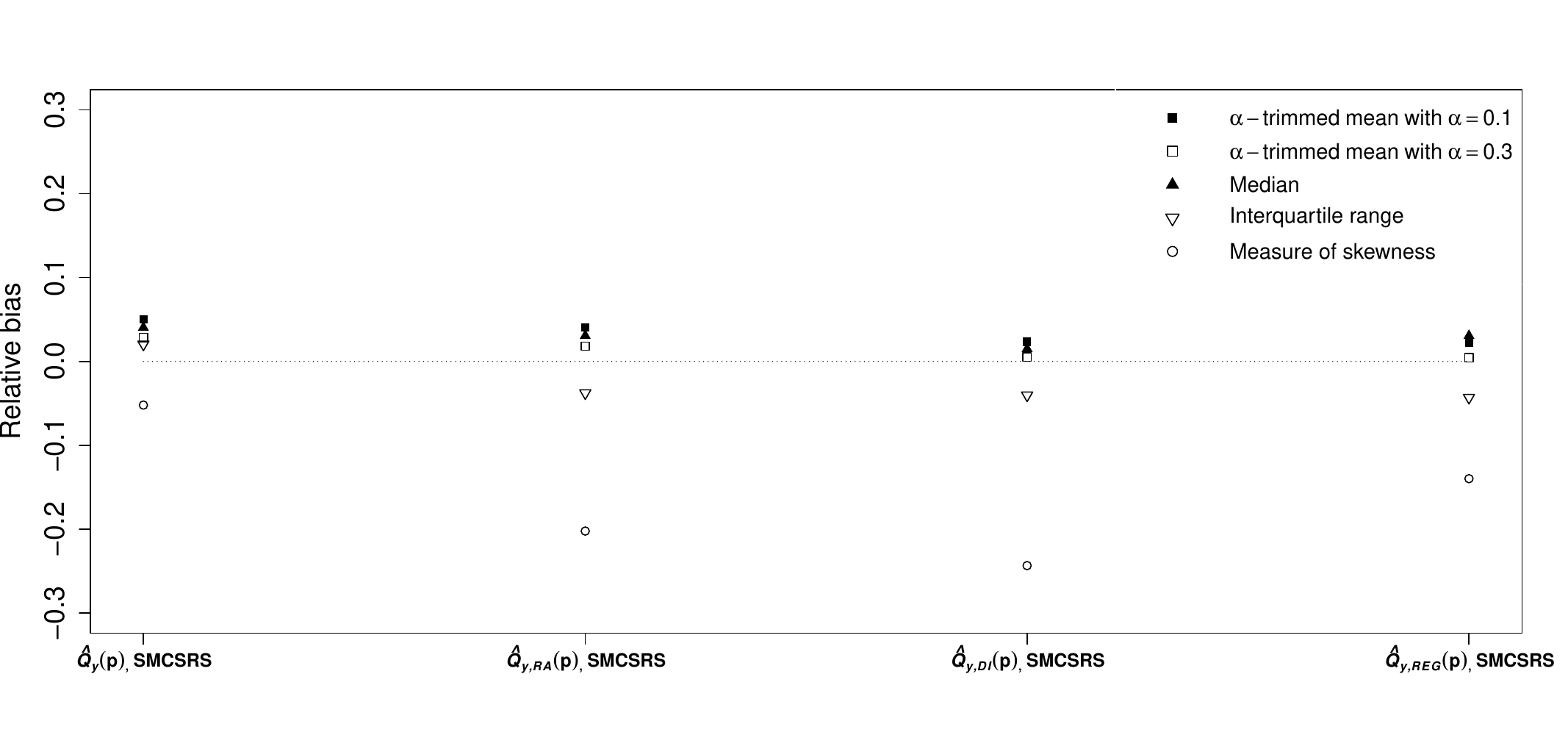}
\caption{Relative biases of different estimators for $n$=$108$ in the case of SMCSRS. In this figure, SMCSRS stands for stratified multistage cluster sampling design with SRSWOR.}
\label{Fig 8}
\end{center}
\end{figure}

\begin{figure}[h!]
\begin{center}
\includegraphics[height=8cm,width=14cm]{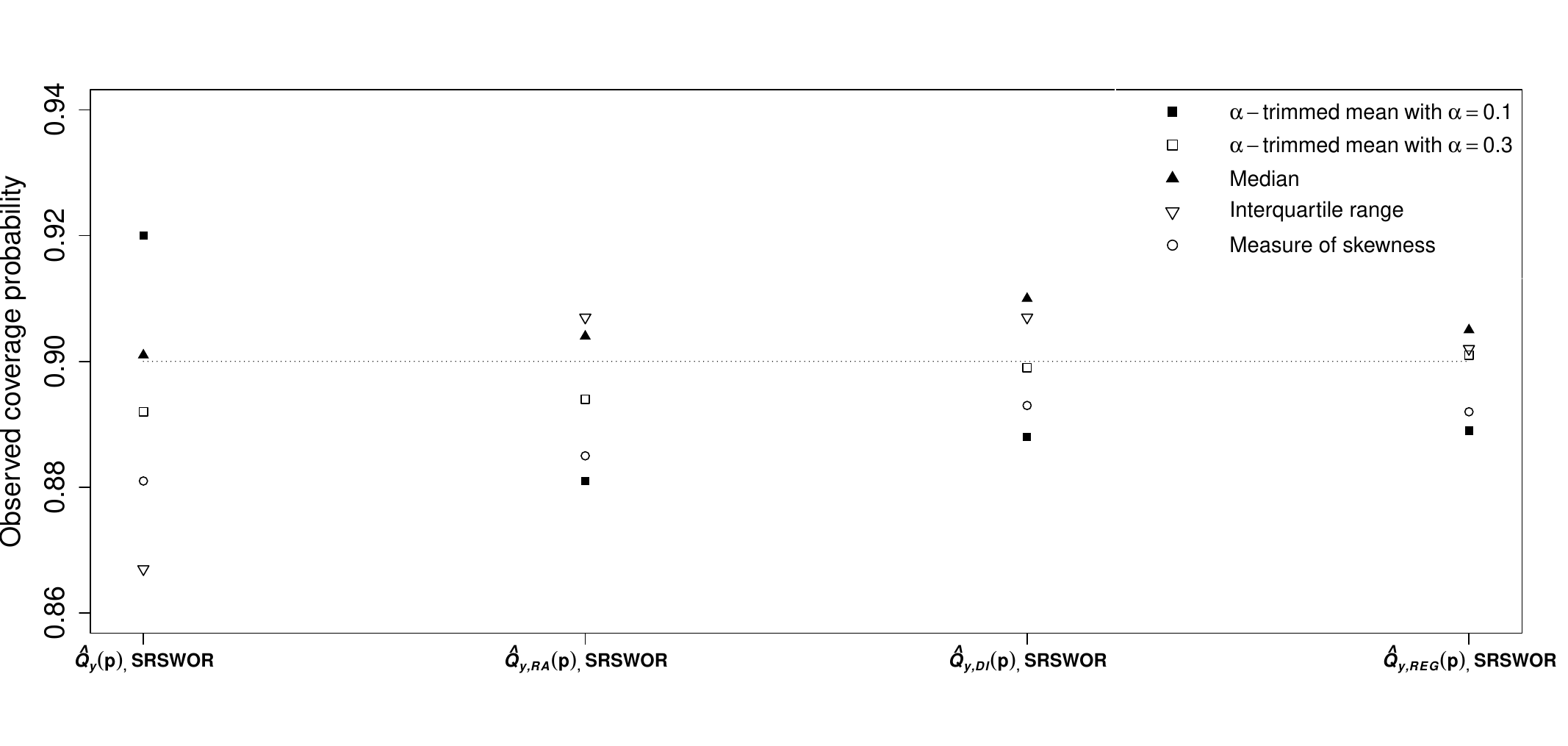}
\caption{Observed coverage probabilities of nominal $90 \%$ confidence intervals for $n$=$200$ in the case of SRSWOR  (the magnitude of the Monte Carlo standard error for observed coverage probabilities is $0.009$). }
\label{Fig 9}
\end{center}
\end{figure}

\begin{figure}[h!]
\begin{center}
\includegraphics[height=8cm,width=14cm]{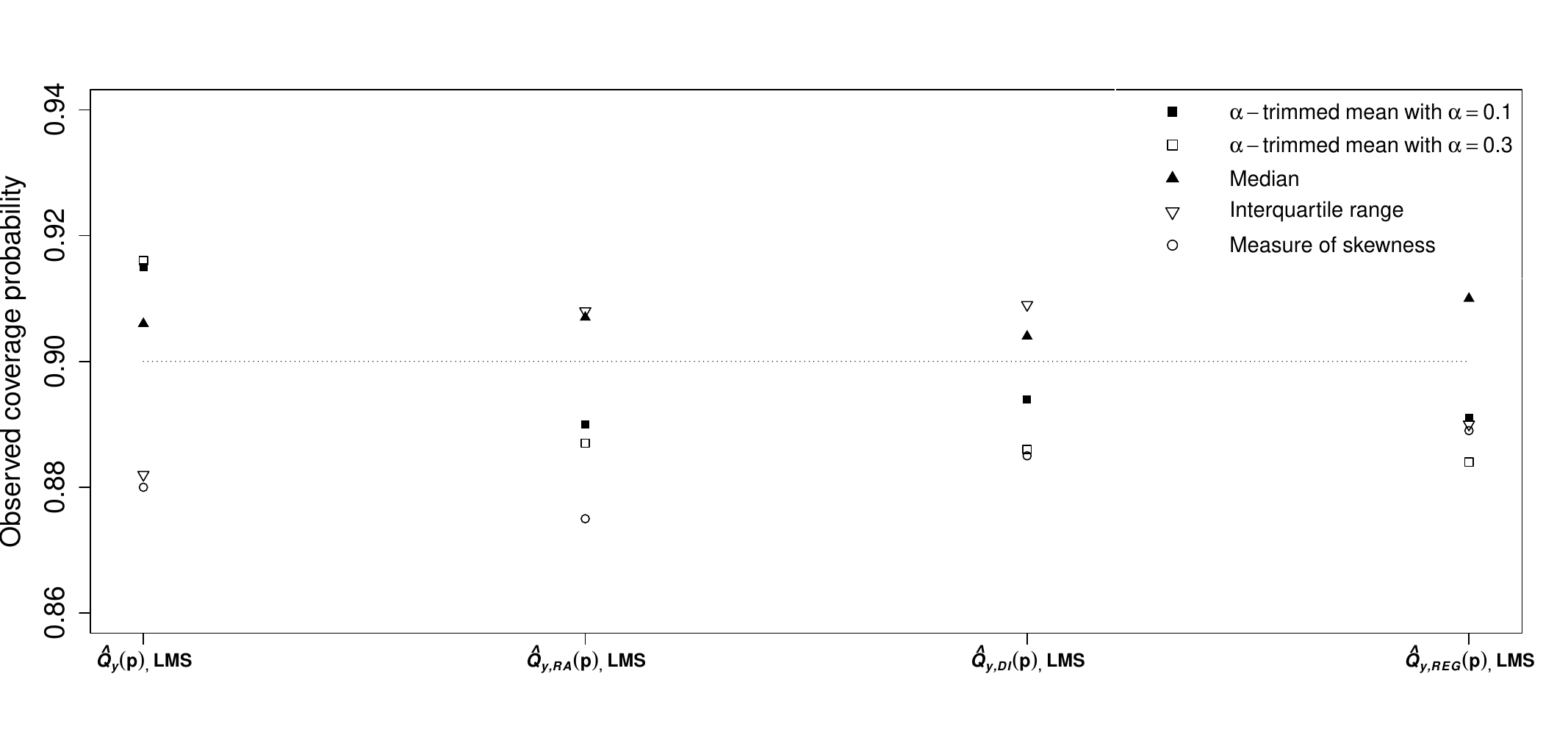}
\caption{Observed coverage probabilities of nominal $90 \%$ confidence intervals for $n$=$200$ in the case of LMS sampling design  (the magnitude of the Monte Carlo standard error for observed coverage probabilities is $0.009$). }
\label{Fig 10}
\end{center}
\end{figure}

\begin{figure}[h!]
\begin{center}
\includegraphics[height=8cm,width=14cm]{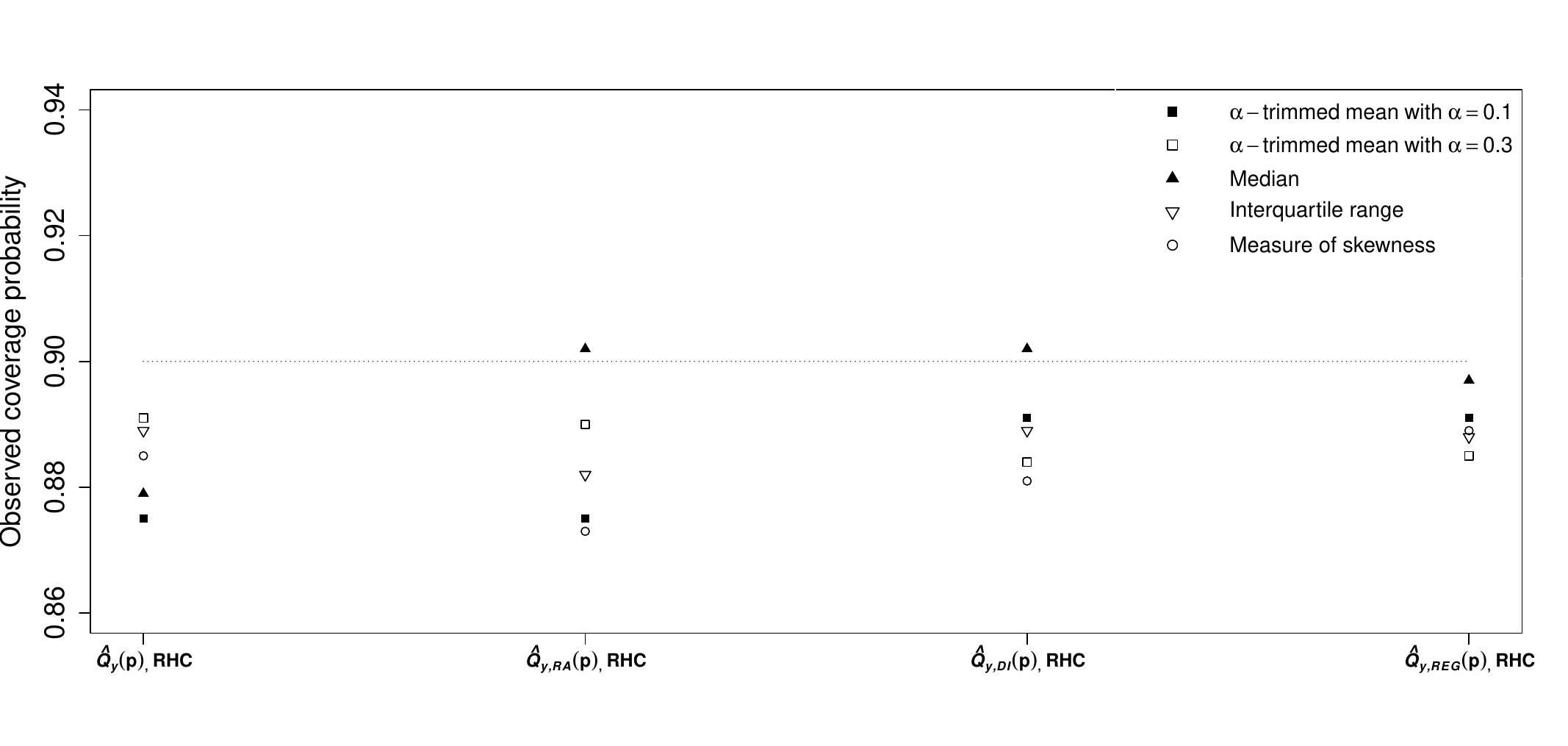}
\caption{Observed coverage probabilities of nominal $90 \%$ confidence intervals for $n$=$200$ in the case of RHC sampling design  (the magnitude of the Monte Carlo standard error for observed coverage probabilities is $0.009$). }
\label{Fig 11}
\end{center}
\end{figure}

\begin{figure}[h!]
\begin{center}
\includegraphics[height=8cm,width=14cm]{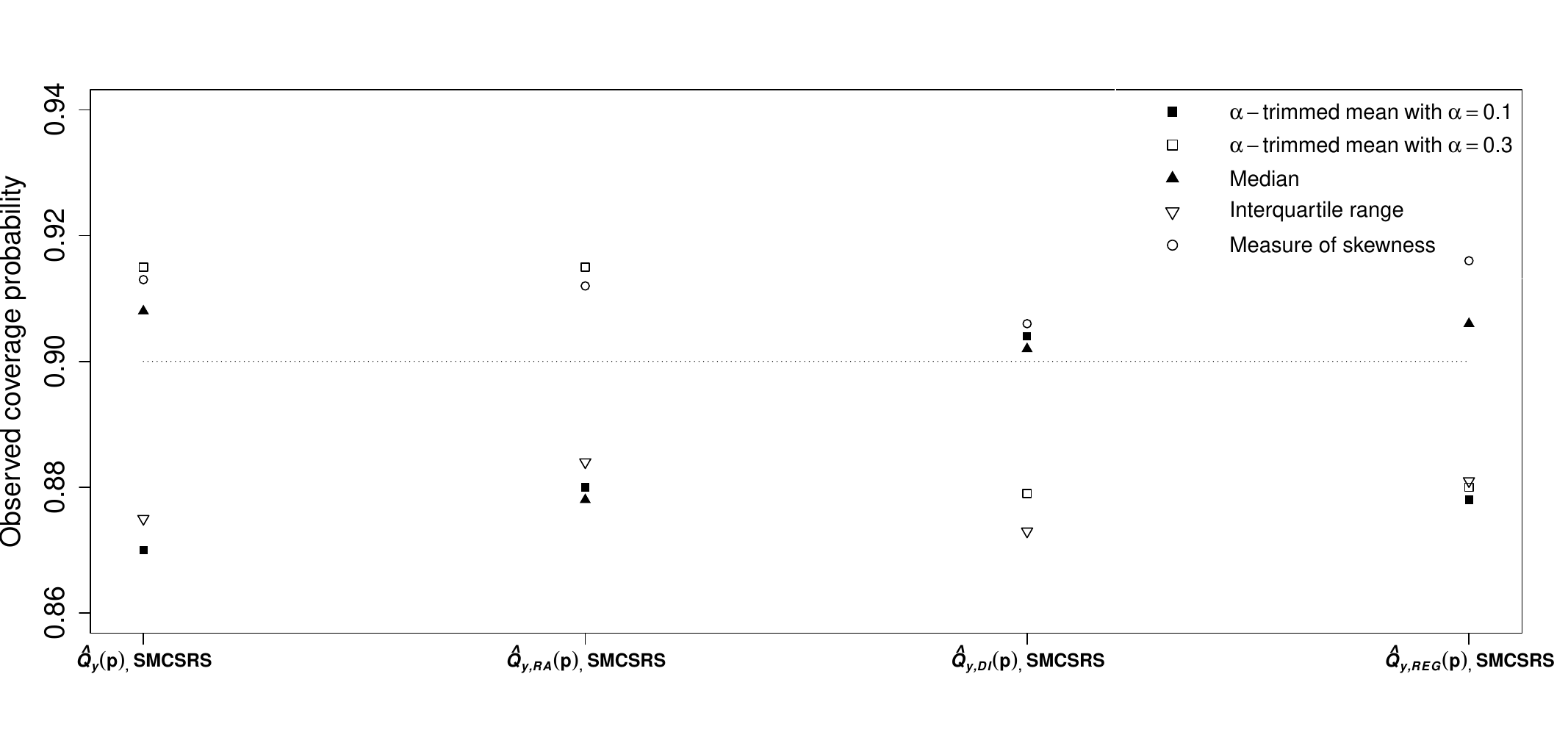}
\caption{Observed coverage probabilities of nominal $90 \%$ confidence intervals for $n$=$108$ in the case of SMCSRS  (the magnitude of the Monte Carlo standard error for observed coverage probabilities is $0.009$).  In this figure, SMCSRS stands for stratified multistage cluster sampling design with SRSWOR.}
\label{Fig 12}
\end{center}
\end{figure}

\begin{figure}[h!]
\begin{center}
\includegraphics[height=8cm,width=14cm]{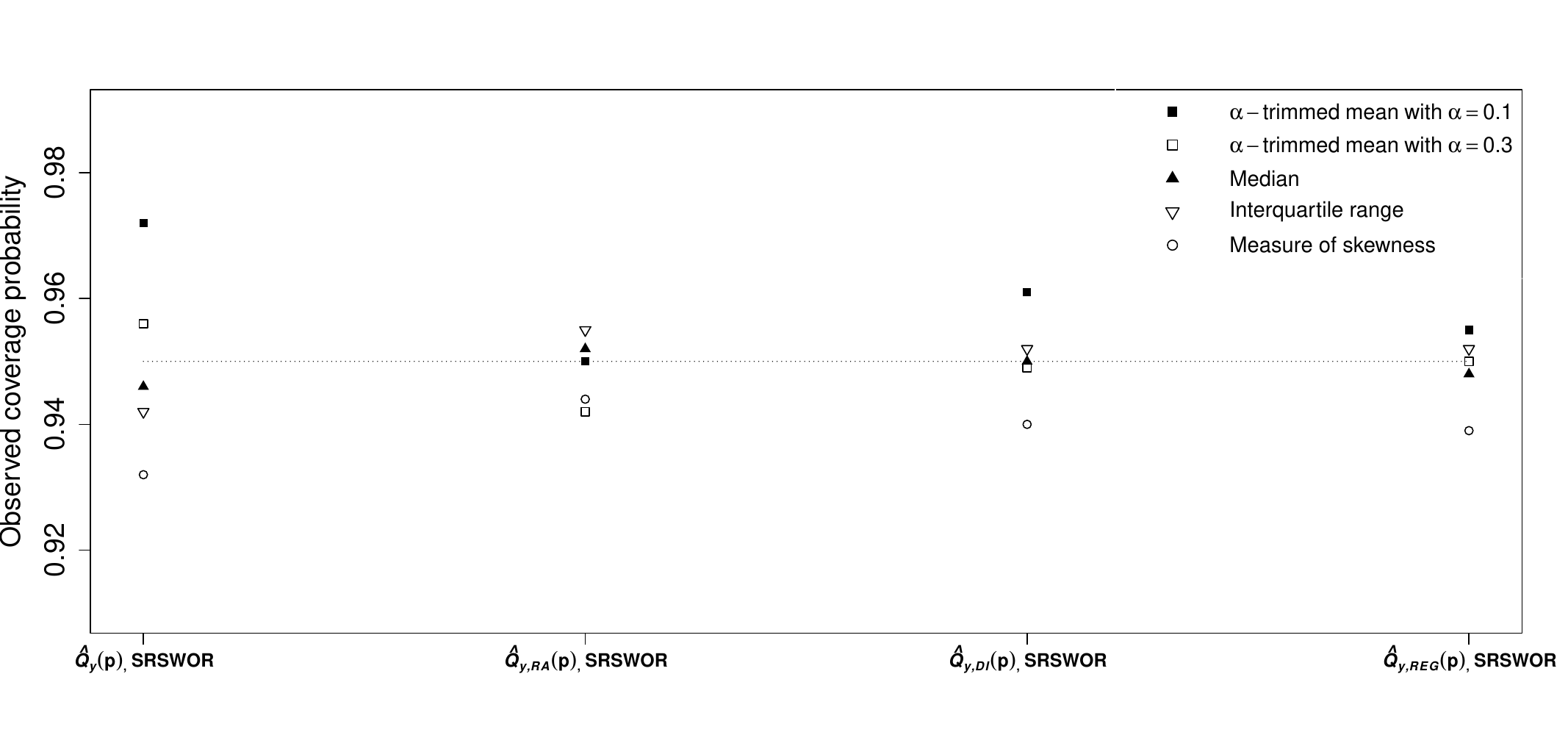}
\caption{Observed coverage probabilities of nominal $95 \%$ confidence intervals for $n$=$200$ in the case of SRSWOR  (the magnitude of the Monte Carlo standard error for observed coverage probabilities is $0.007$). }
\label{Fig 13}
\end{center}
\end{figure}

\begin{figure}[h!]
\begin{center}
\includegraphics[height=8cm,width=14cm]{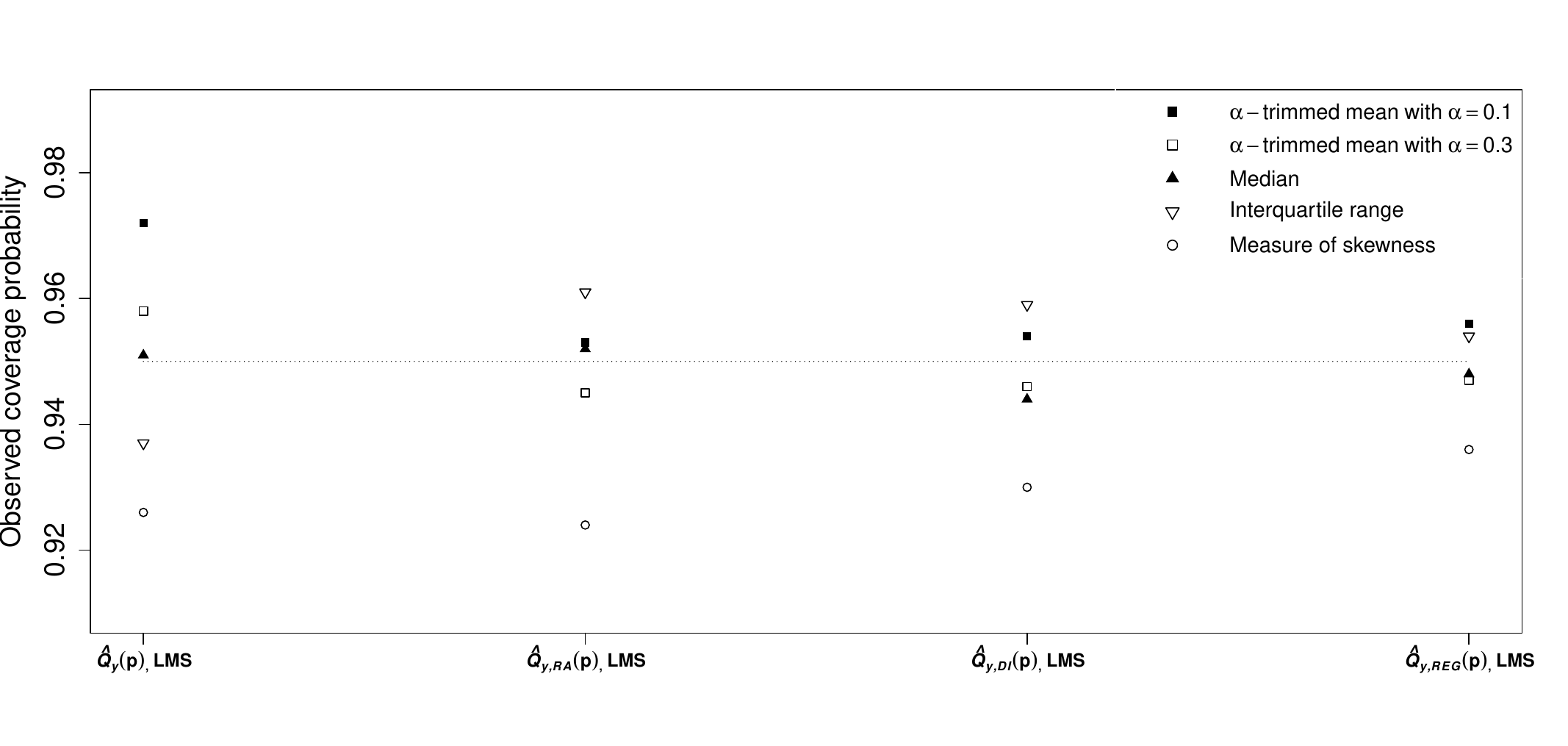}
\caption{Observed coverage probabilities of nominal $95 \%$ confidence intervals for $n$=$200$ in the case of LMS sampling design  (the magnitude of the Monte Carlo standard error for observed coverage probabilities is $0.007$). }
\label{Fig 14}
\end{center}
\end{figure}

\begin{figure}[h!]
\begin{center}
\includegraphics[height=8cm,width=14cm]{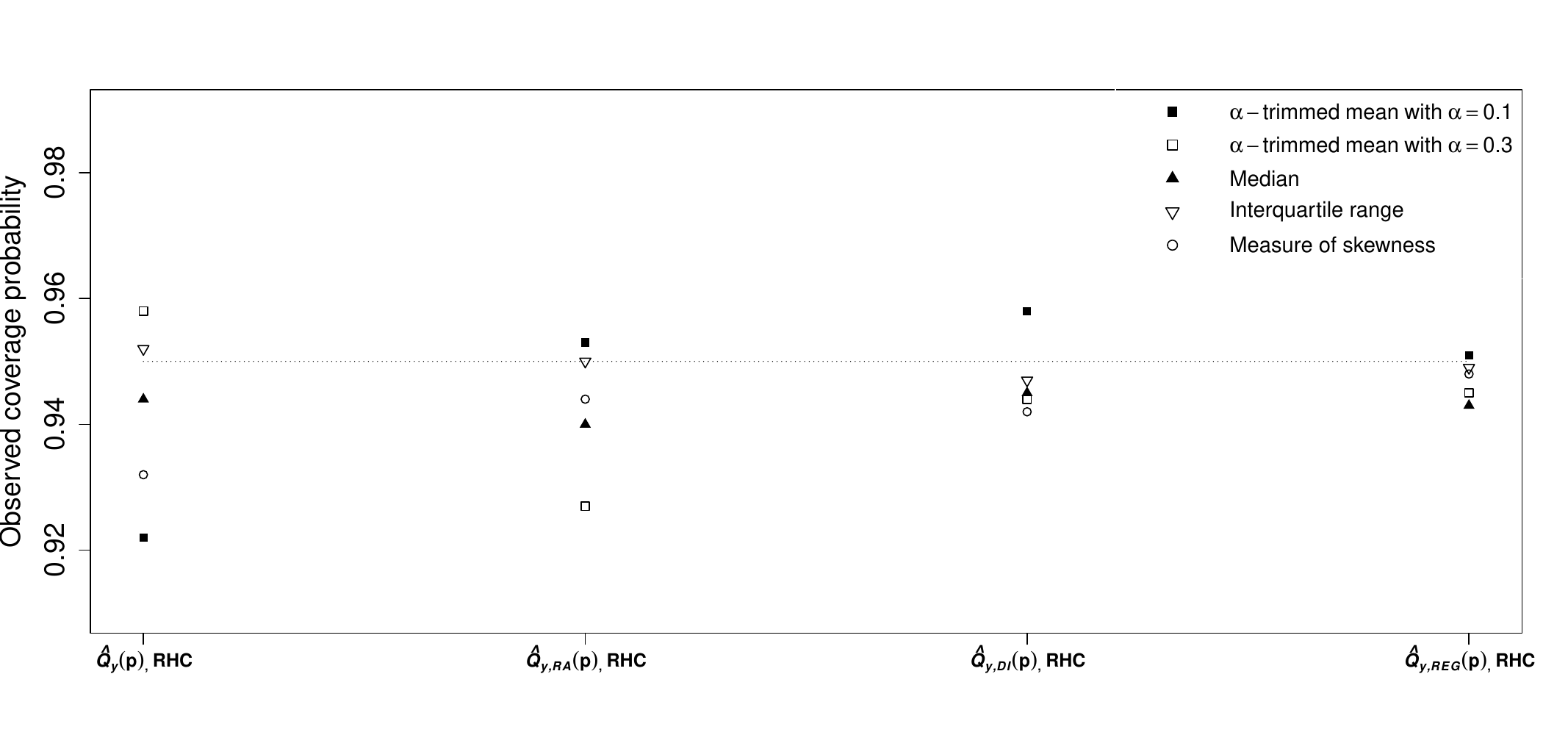}
\caption{Observed coverage probabilities of nominal $95 \%$ confidence intervals for $n$=$200$ in the case of RHC sampling design  (the magnitude of the Monte Carlo standard error for observed coverage probabilities is $0.007$). }
\label{Fig 15}
\end{center}
\end{figure}

\begin{figure}[h!]
\begin{center}
\includegraphics[height=8cm,width=14cm]{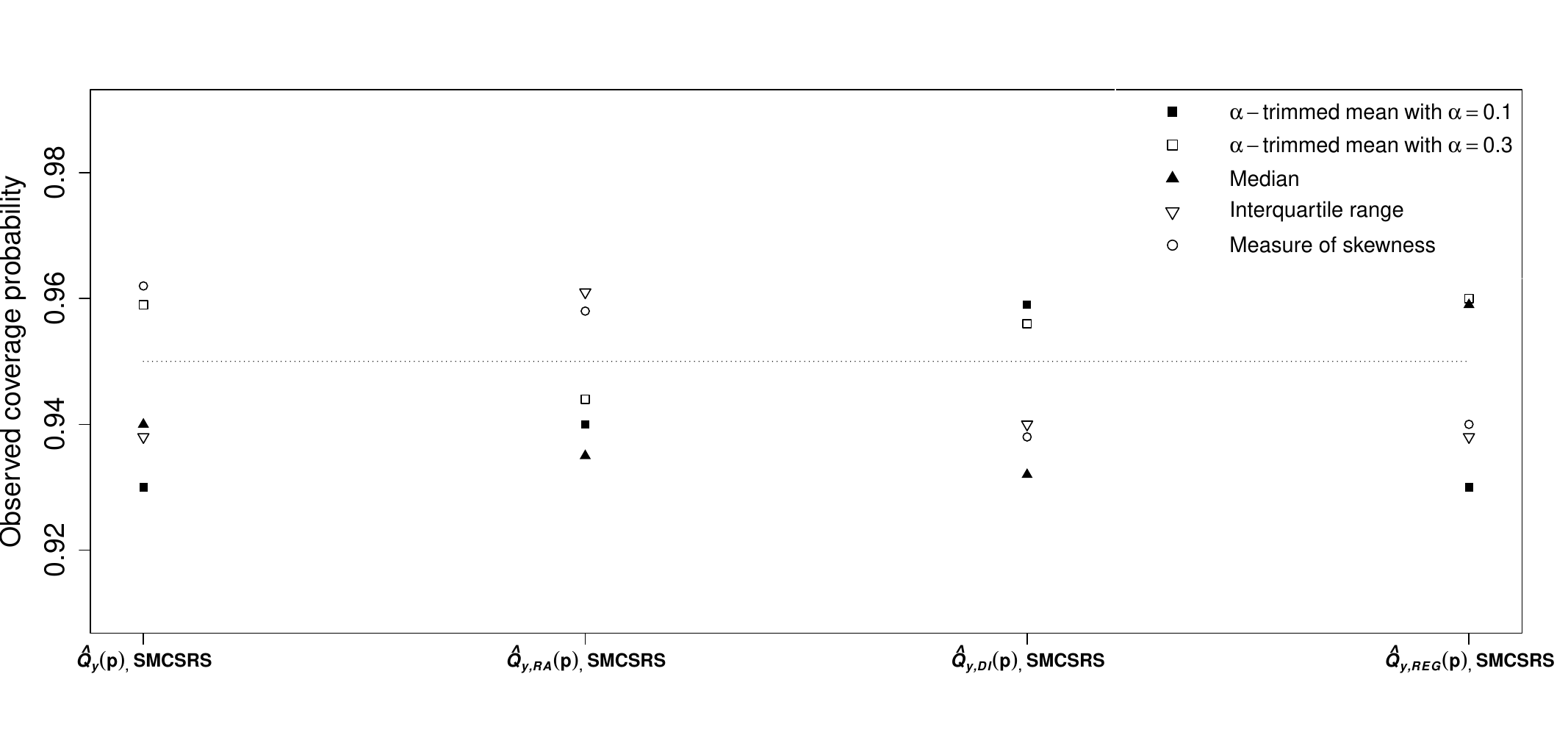}
\caption{Observed coverage probabilities of nominal $95 \%$ confidence intervals for $n$=$108$ in the case of SMCSRS  (the magnitude of the Monte Carlo standard error for observed coverage probabilities is $0.007$).  In this figure, SMCSRS stands for stratified multistage cluster sampling design with SRSWOR.}
\label{Fig 16}
\end{center}
\end{figure}

\begin{figure}[h]
\begin{center}
\includegraphics[height=8cm,width=14cm]{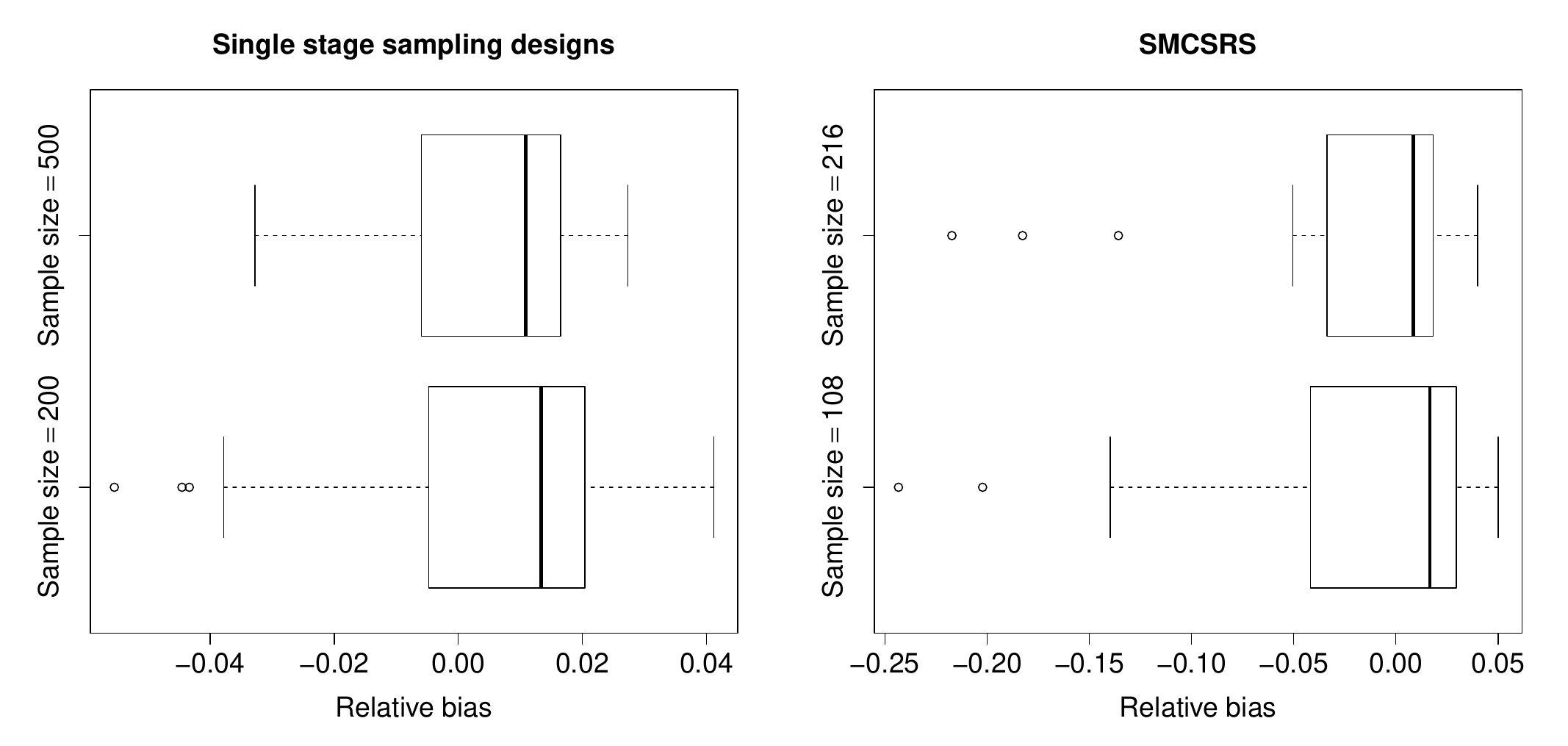}
\caption{Boxplots of relative biases for different parameters and estimators in the cases of single stage sampling designs and SMCSRS. In this figure, SMCSRS stands for stratified multistage cluster sampling design with SRSWOR.}
\label{Fig 33}
\end{center}
\end{figure}

\begin{figure}[h]
\begin{center}
\includegraphics[height=8cm,width=14cm]{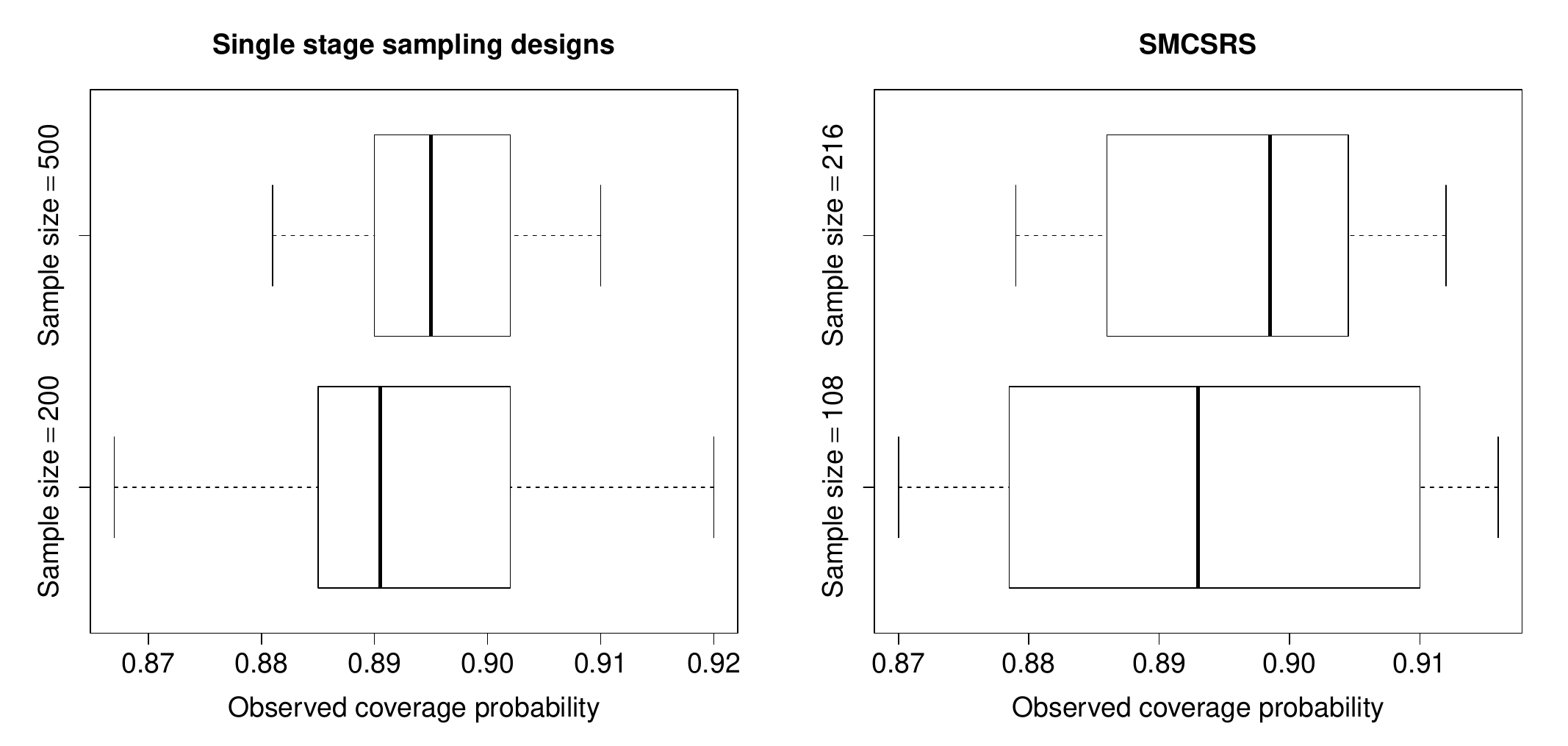}
\caption{Boxplots of observed coverage probabilities of nominal $90\%$ confidence intervals for different estimators and parameters in the cases of single stage sampling designs and SMCSRS. In this figure, SMCSRS stands for stratified multistage cluster sampling design with SRSWOR.}
\label{Fig 35}
\end{center}
\end{figure}
\clearpage

\begin{figure}[h]
\begin{center}
\includegraphics[height=8cm,width=14cm]{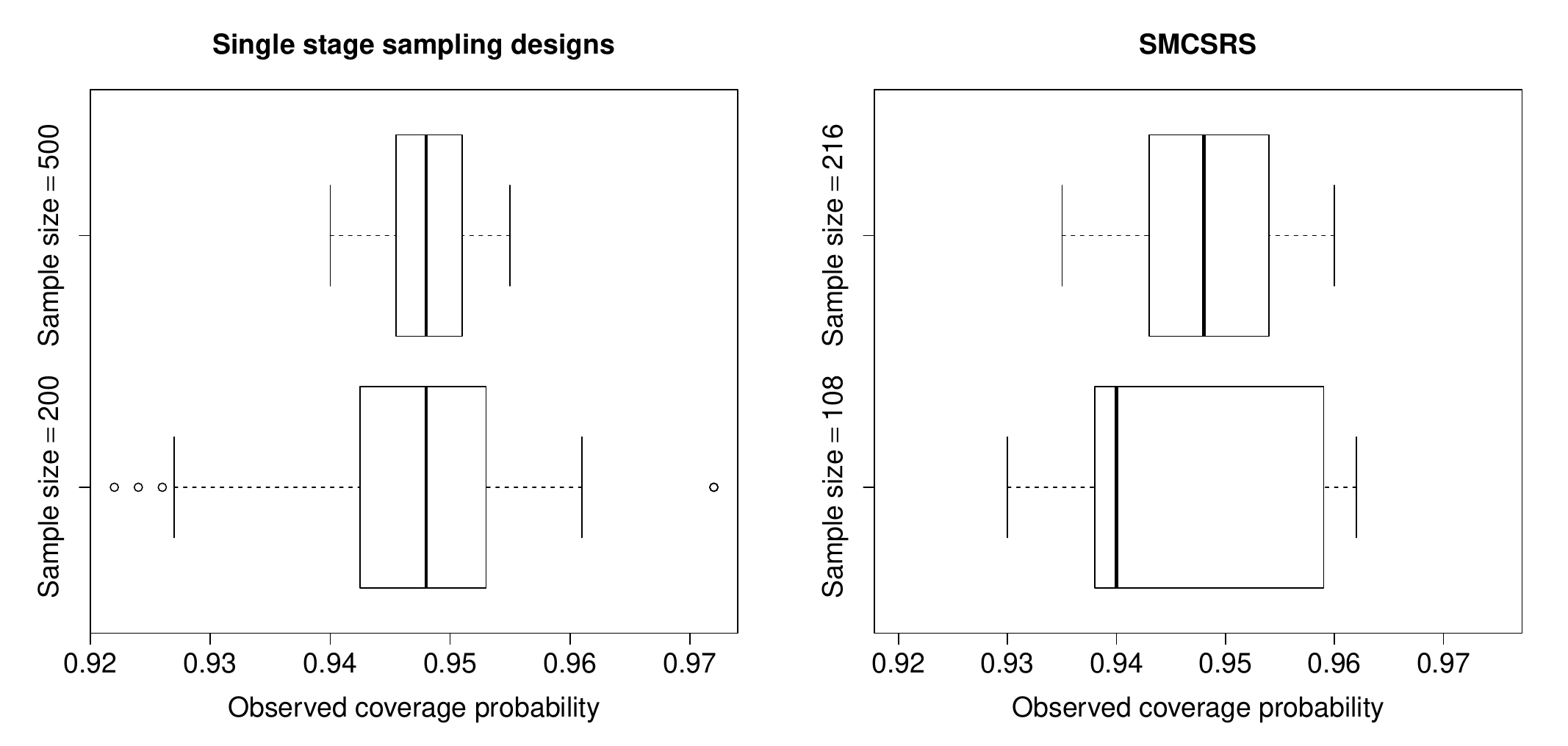}
\caption{Boxplots of observed coverage probabilities of nominal $95\%$ confidence intervals for different estimators and parameters in the cases of single stage sampling designs and SMCSRS. In this figure, SMCSRS stands for stratified multistage cluster sampling design with SRSWOR.}
\label{Fig 36}
\end{center}
\end{figure}
\subsection{Results of data analysis carried out in Section $6$ of this supplement}  The results obtained from the analysis described in Section \ref{sec 6} of this supplement  are summarized in this section. Here, we provide some plots that were mentioned in Section \ref{sec 6} of this supplement. Figures \ref{Fig 17}--\ref{Fig 4} present plots of ratios of asymptotic and true MSEs.  Figure \ref{Fig 34} presents boxplots of ratios of asymptotic and true MSEs for different parameters and estimators in the cases of single stage sampling designs and stratified multistage cluster sampling design with SRSWOR.  Table \ref{table 4} presents the estimators having least asymptotic and true MSEs for different parameters and sampling designs. Table \ref{table 5} presents the sampling designs under which various estimators of various parameters have the least asymptotic and true MSEs. Figure \ref{Fig 37} presents the scatter plot and the regression line between the study and the auxiliary variables.
\clearpage

\begin{figure}[h!]
\begin{center}
\includegraphics[height=8.5cm,width=14cm]{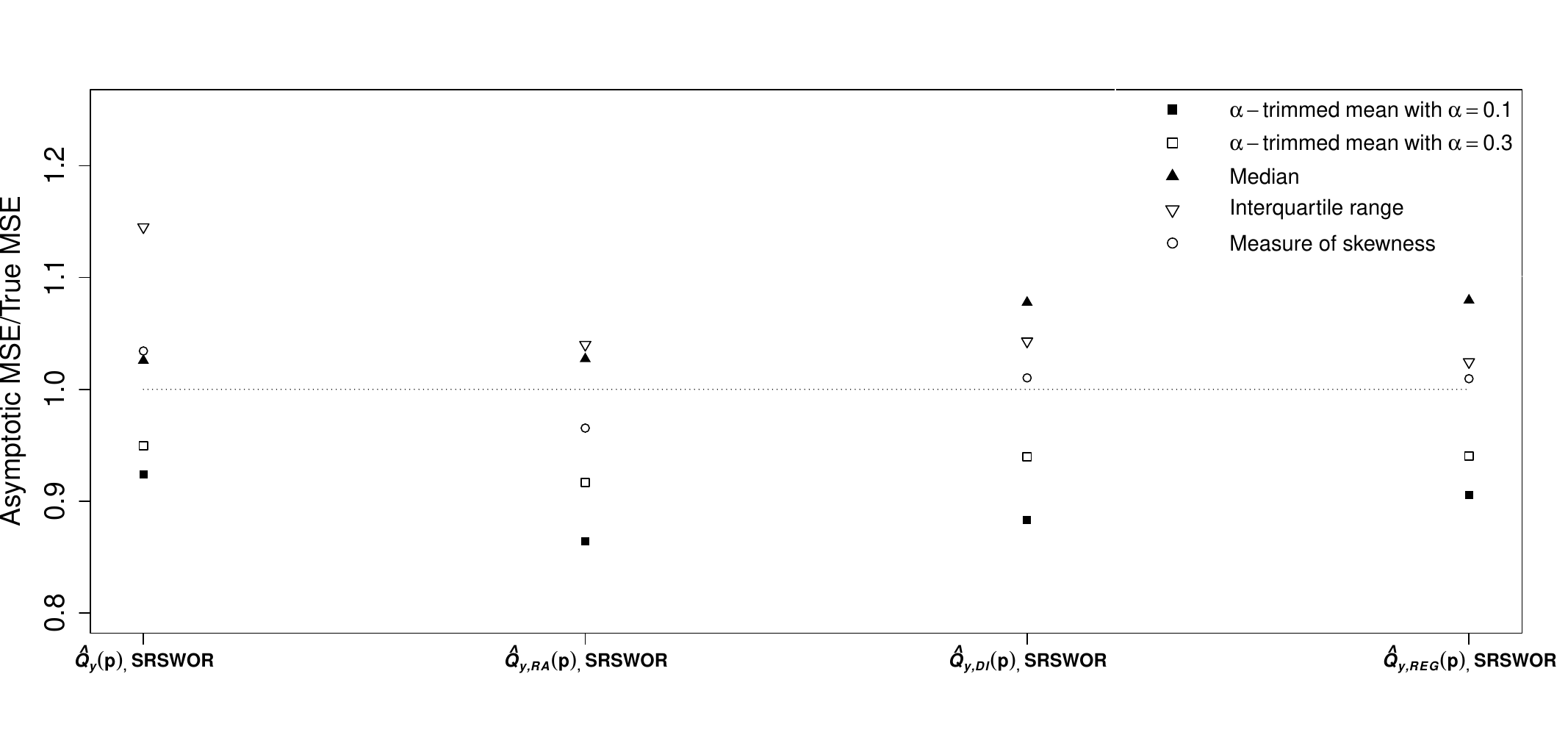}
\caption{Ratios of asymptotic and true MSEs of different estimators for $n$=$500$ in the case of SRSWOR.}
\label{Fig 17}
\end{center}
\end{figure}

\begin{figure}[h!]
\begin{center}
\includegraphics[height=8.5cm,width=14cm]{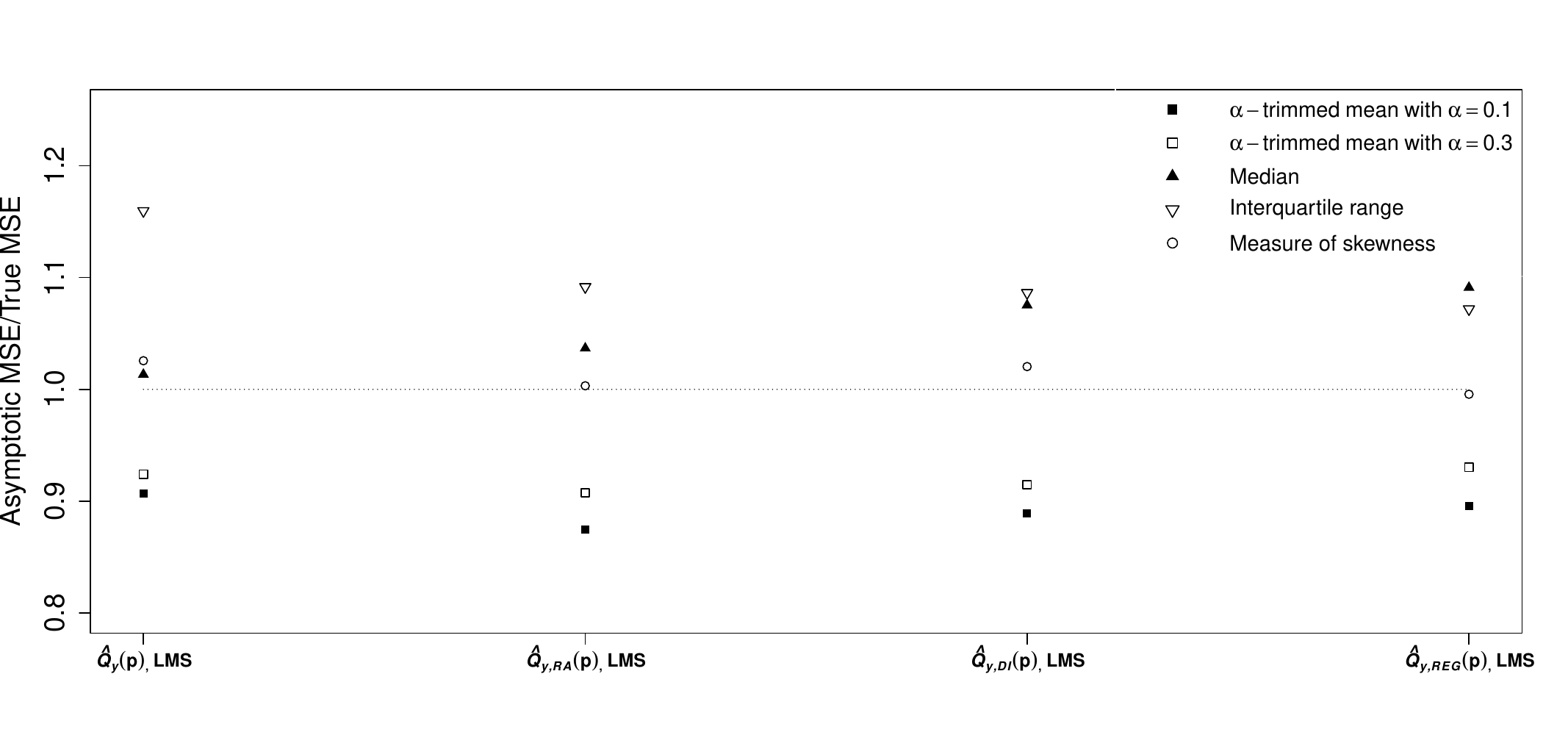}
\caption{Ratios of asymptotic and true MSEs of different estimators for $n$=$500$ in the case of LMS sampling design.}
\label{Fig 18}
\end{center}
\end{figure}

\begin{figure}[h!]
\begin{center}
\includegraphics[height=8.5cm,width=14cm]{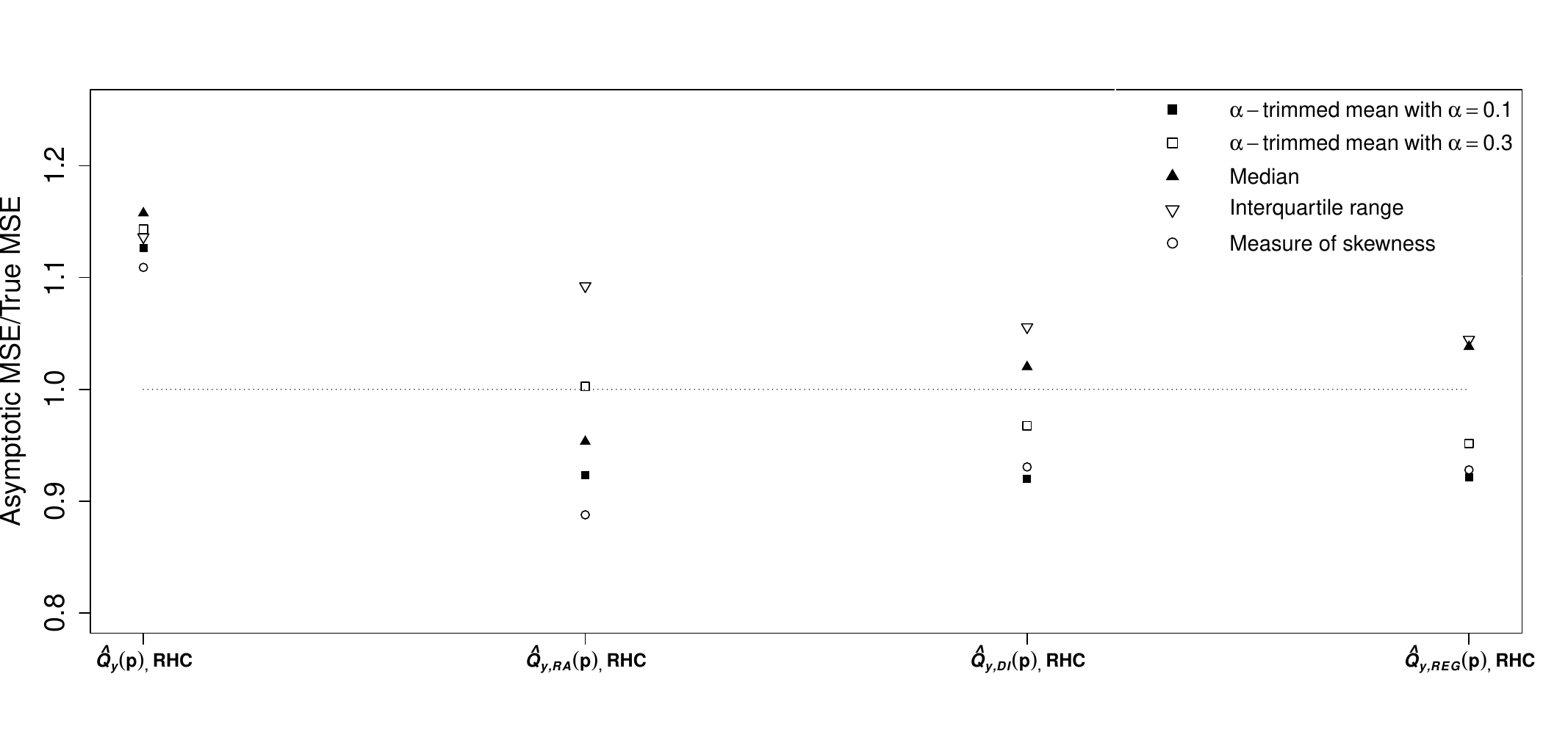}
\caption{Ratios of asymptotic and true MSEs of different estimators for $n$=$500$ in the case of RHC sampling design.}
\label{Fig 19}
\end{center}
\end{figure}

\begin{figure}[h!]
\begin{center}
\includegraphics[height=8.5cm,width=14cm]{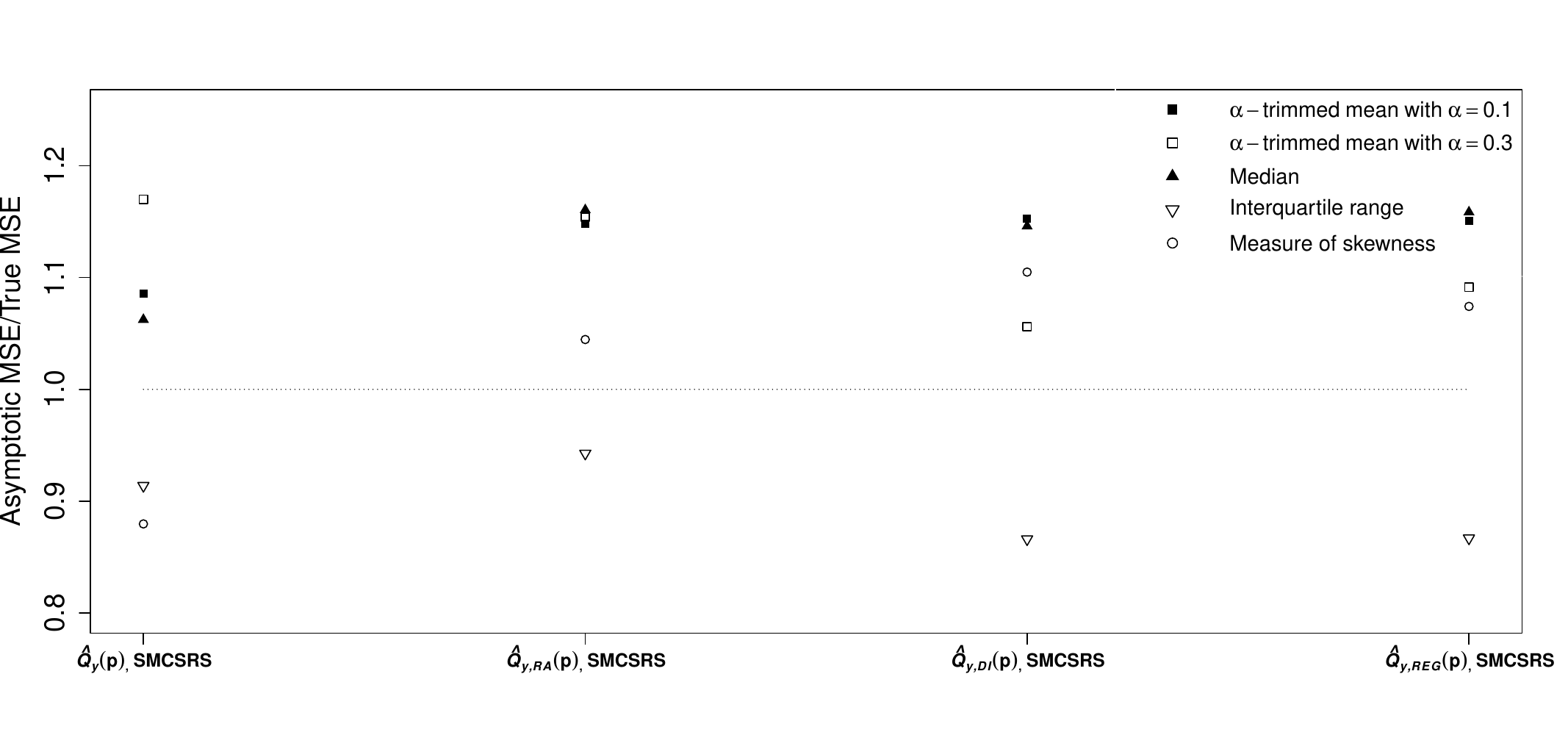}
\caption{Ratios of asymptotic and true MSEs of different estimators for $n$=$216$ in the case of SMCSRS. In this figure, SMCSRS stands for stratified multistage cluster sampling design with SRSWOR.}
\label{Fig 20}
\end{center}
\end{figure}

\begin{figure}[h]
\begin{center}
\includegraphics[height=8.5cm,width=14cm]{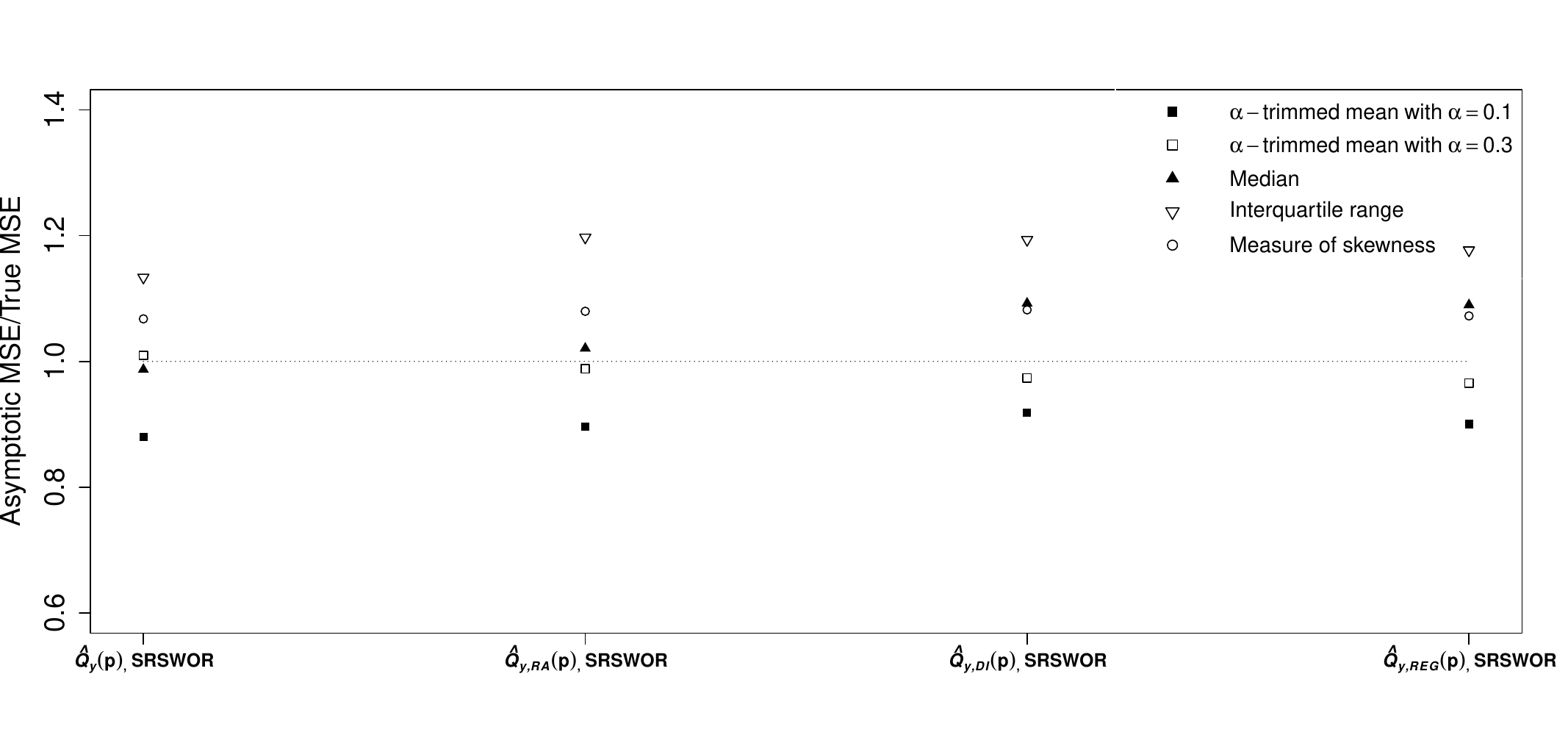}
\caption{Ratios of asymptotic and true MSEs of different estimators for $n$=$200$ in the case of SRSWOR.}
\label{Fig 1}
\end{center}
\end{figure}

\begin{figure}[h!]
\begin{center}
\includegraphics[height=8.5cm,width=14cm]{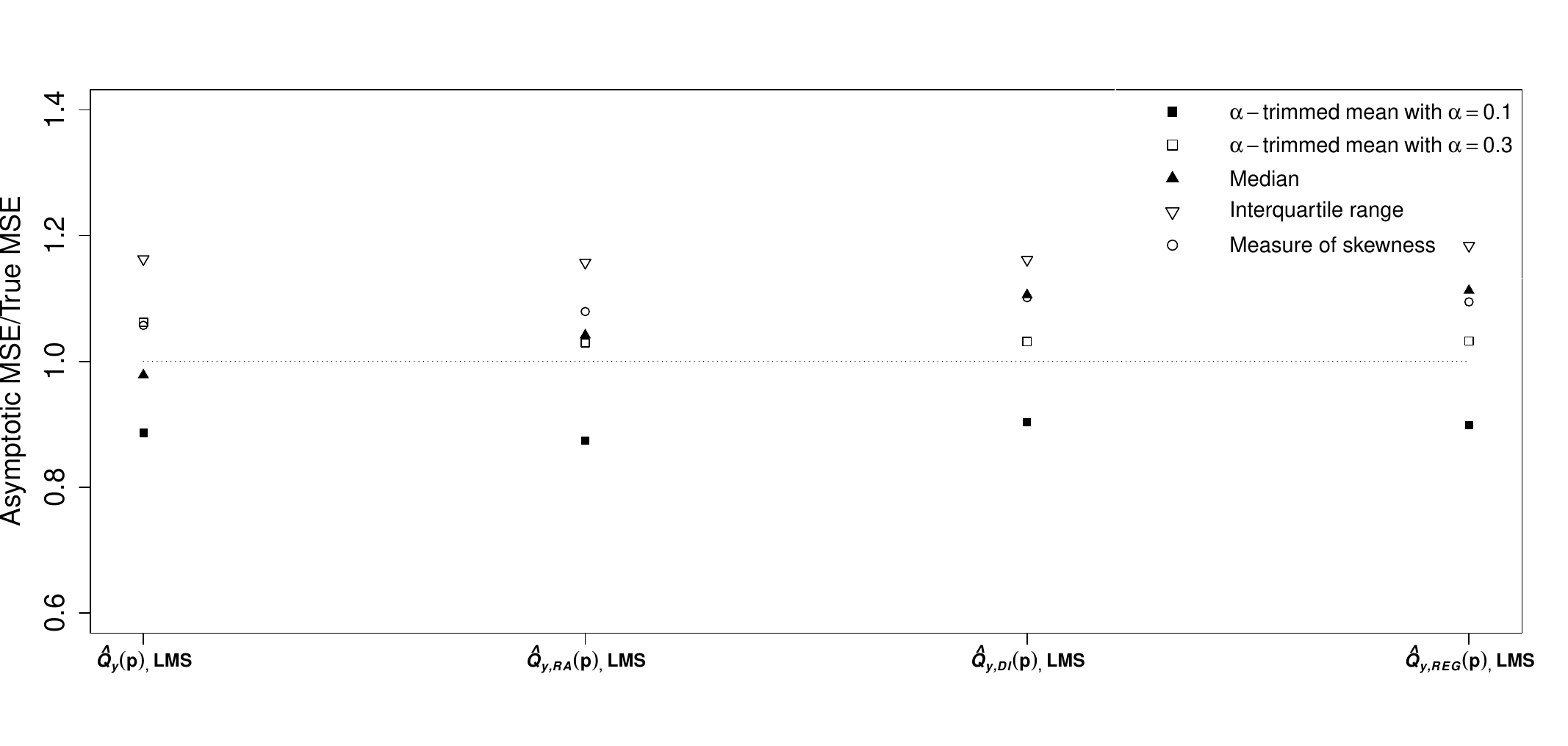}
\caption{Ratios of asymptotic and true MSEs of different estimators for $n$=$200$ in the case of LMS sampling design.}
\label{Fig 2}
\end{center}
\end{figure}

\begin{figure}[h!]
\begin{center}
\includegraphics[height=8.5cm,width=14cm]{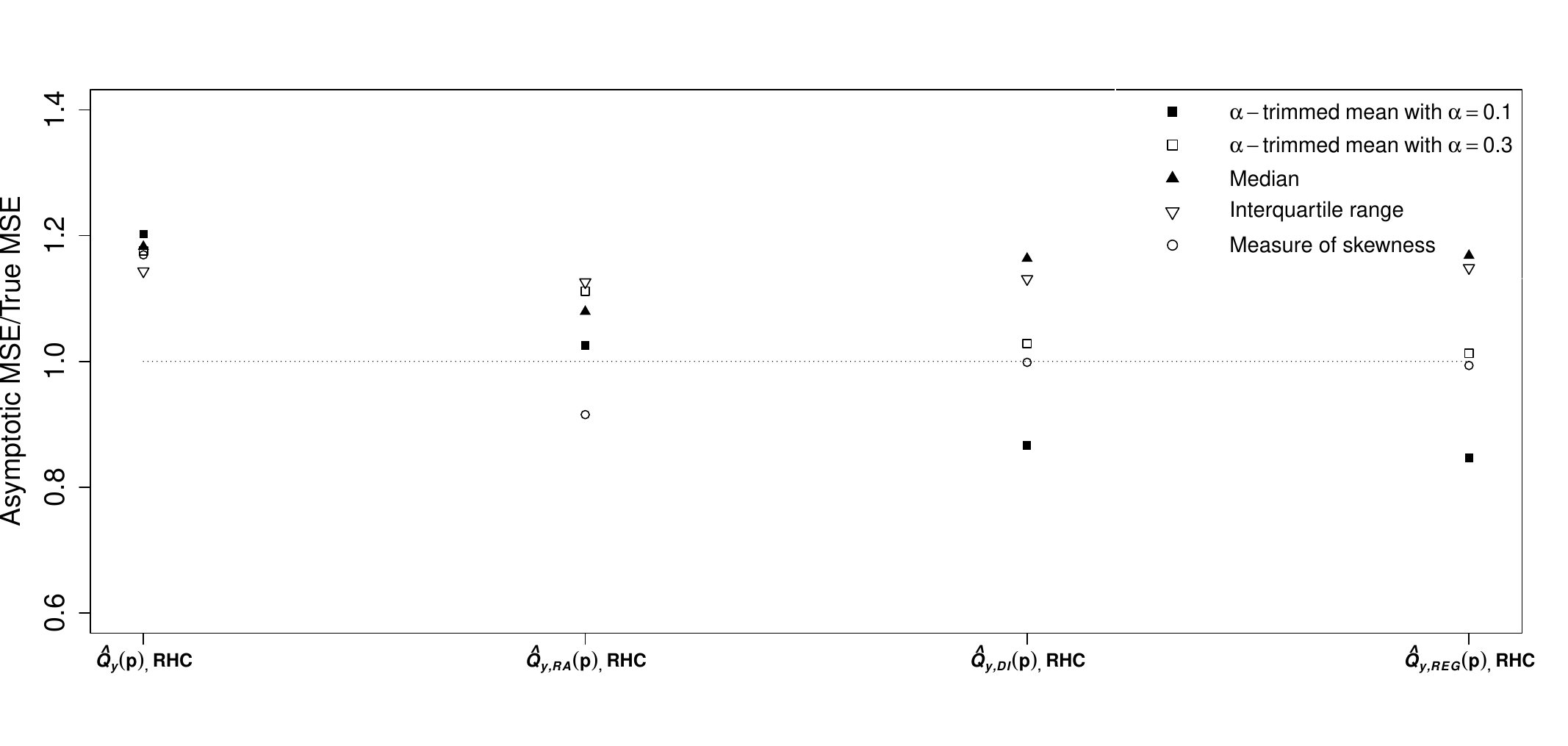}
\caption{Ratios of asymptotic and true MSEs of different estimators for $n$=$200$ in the case of RHC sampling design.}
\label{Fig 3}
\end{center}
\end{figure}

\begin{figure}[h!]
\begin{center}
\includegraphics[height=8.5cm,width=14cm]{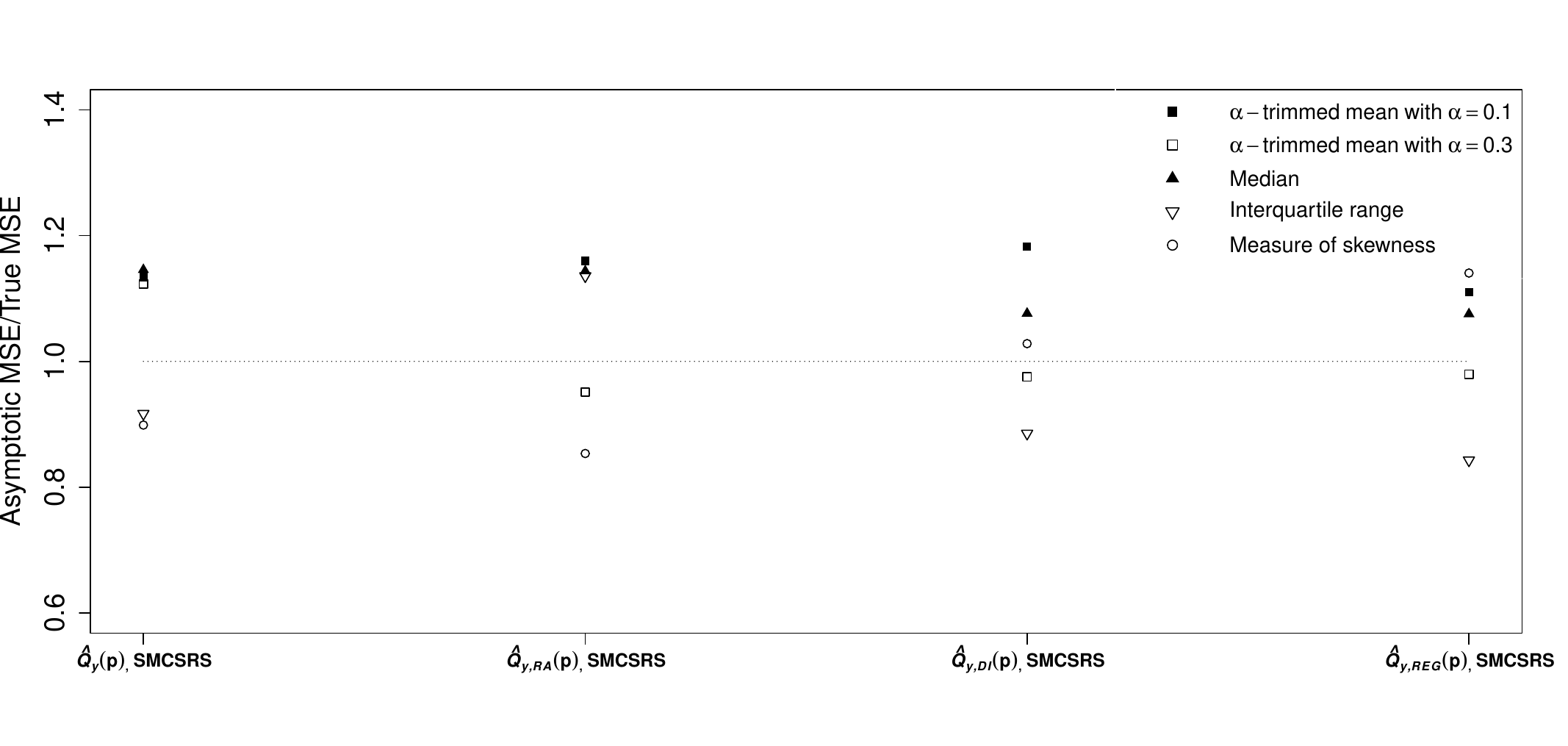}
\caption{Ratios of asymptotic and true MSEs of different estimators for $n$=$108$ in the case of SMCSRS. In this figure, SMCSRS stands for stratified multistage cluster sampling design with SRSWOR.}
\label{Fig 4}
\end{center}
\end{figure}
\clearpage

\begin{figure}[h!]
\begin{center}
\includegraphics[height=8cm,width=14cm]{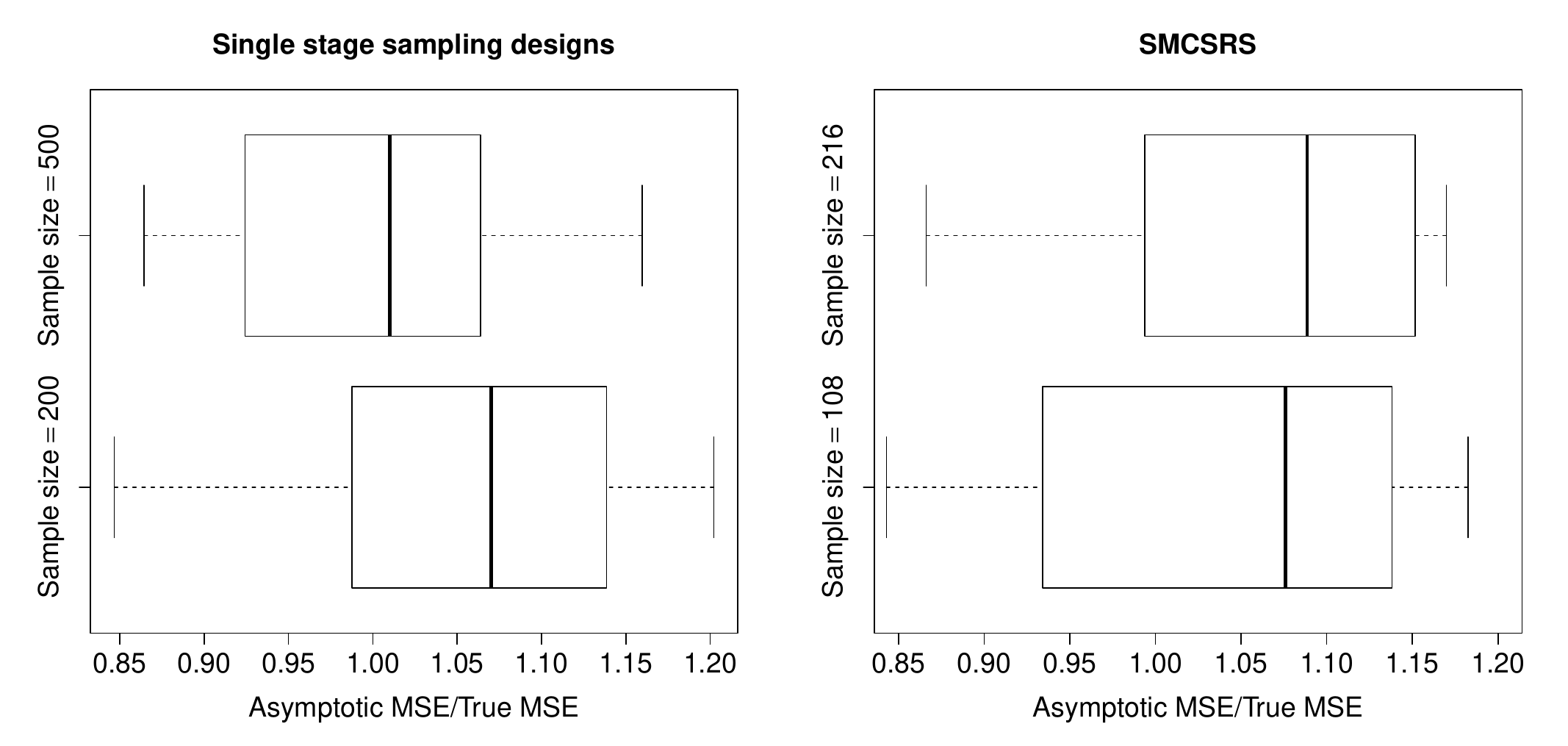}
\caption{Boxplots of ratios of asymptotic and true MSEs for different estimators and parameters in the cases of single stage sampling designs and SMCSRS. In this figure, SMCSRS stands for stratified multistage cluster sampling design with SRSWOR.}
\label{Fig 34}
\end{center}
\end{figure}

\begin{table}[h!] 
\centering  
\caption{Estimators with least asymptotic and true MSEs (note that nominal $90\%$ as well as $95\%$ confidence intervals based on these estimators have the least average lengths).}

\label{table 4}
\renewcommand{\arraystretch}{0.8}
\begin{tabular}{cccc}
\hline
&\multicolumn{3}{c}{ Sampling design}\\
\hline
Parameter&  SRSWOR& RHC& RS\\
 \hline
\multirow{2}{*}{ Median }& Estimator based on &Estimator based on&Estimator based on\\   
& $\hat{Q}_{y,RA}(p)$&$\hat{Q}_{y,DI}(p)$& $\hat{Q}_{y,DI}(p)$\\
\hline
$\alpha$-trimmed mean &Estimator based on &Estimator based on&Estimator based on\\  
with $\alpha$=$0.1$& $\hat{Q}_{y,DI}(p)$&$\hat{Q}_{y,DI}(p)$& $\hat{Q}_{y,DI}(p)$\\
\hline
$\alpha$-trimmed mean &Estimator based on &Estimator based on&Estimator based on\\  
with $\alpha$=$0.3$&  $\hat{Q}_{y,RA}(p)$&$\hat{Q}_{y,REG}(p)$& $\hat{Q}_{y,REG}(p)$\\
\hline
Interquartile &Estimator based on &Estimator based on&Estimator based on\\  
range& $\hat{Q}_{y,RA}(p)$&$\hat{Q}_{y,RA}(p)$& $\hat{Q}_{y,RA}(p)$\\
\hline
Bowley's measure &Estimator based on &Estimator based on&Estimator based on\\  
of skewness& $\hat{Q}_{y,RA}(p)$&$\hat{Q}_{y,DI}(p)$& $\hat{Q}_{y,DI}(p)$\\
\hline
\end{tabular}
\end{table}
 
\begin{table}[h!] 
\centering  
\caption{Sampling designs under which estimators have the least asymptotic and true MSEs (note that nominal $90\%$ as well as $95\%$ confidence intervals under these sampling designs have the least average lengths).}

\label{table 5}
\begin{tabular}{ccccc}
\hline
&\multicolumn{4}{c}{ Estimator based on}\\
\hline
Parameter& $\hat{Q}_y(p)$& $\hat{Q}_{y,RA}(p)$& $\hat{Q}_{y,DI}(p)$& $\hat{Q}_{y,REG}(p)$\\
 \hline
Median& SRSWOR&SRSWOR&SRSWOR& SRSWOR\\  
\hline
$\alpha$-trimmed mean with $\alpha$=$0.1$& SRSWOR&SRSWOR&RS& RS\\  
\hline
$\alpha$-trimmed mean with $\alpha$=$0.3$& SRSWOR&SRSWOR&SRSWOR& SRSWOR\\  
\hline
Interquartile range&SRSWOR&SRSWOR&SRSWOR& SRSWOR\\   
\hline
Bowley's measure of skewness& SRSWOR&SRSWOR&SRSWOR& SRSWOR\\    
\hline
\end{tabular}
\end{table} 

\begin{figure}[h!]
\begin{center}
\includegraphics[height=8.5cm,width=14cm]{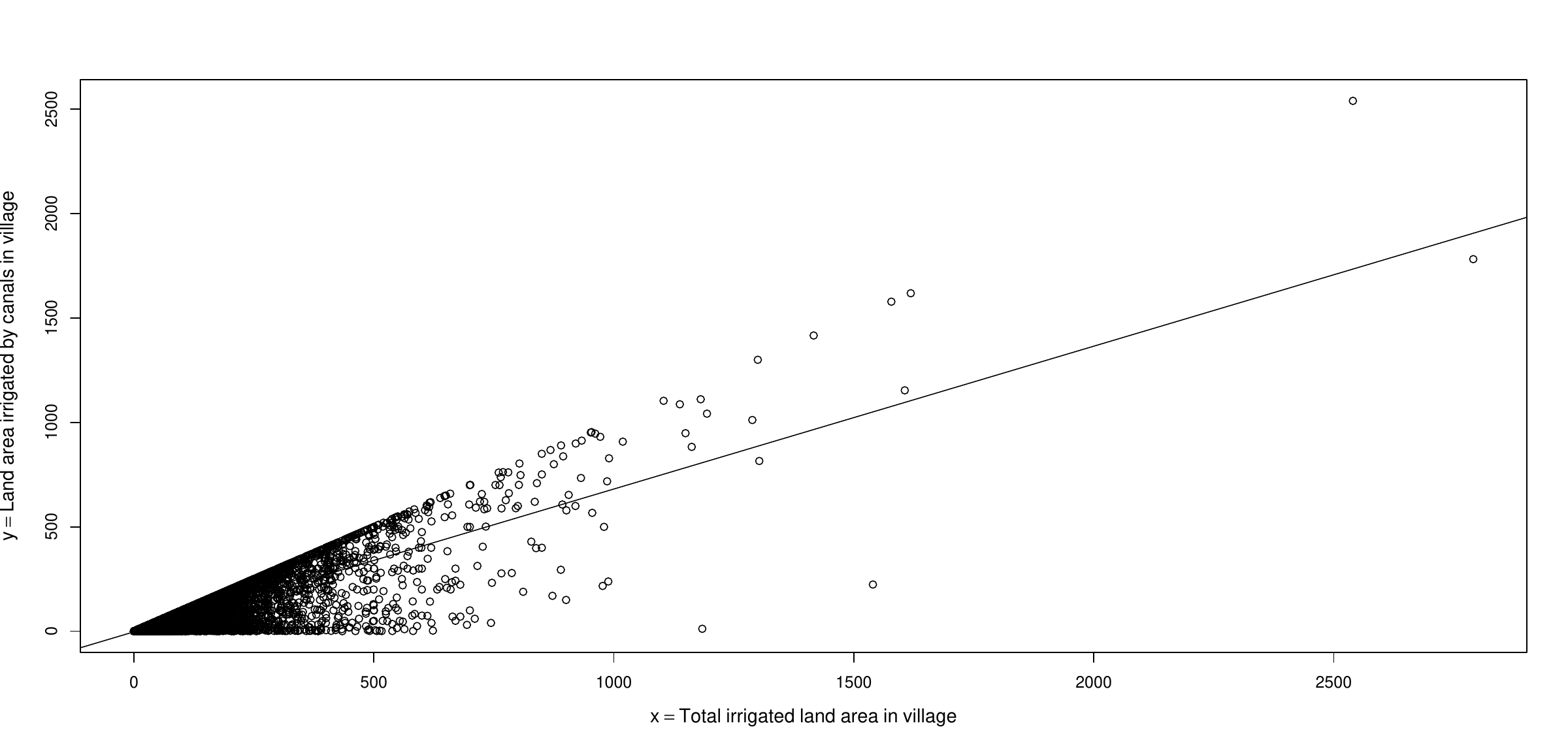}
\caption{Scatter plot and regression line between the study and the auxiliary variables.}
\label{Fig 37}
\end{center}
\end{figure}

%%%%%%%%%%%%%%%%%%%%%%%%%%%%%%%%%%%%%%%%%%%%%%%%%%%%%%%%%%%%%%%%%%%%%%%%%%%%%%%%%%%%%%%%%%%%%%%%%%%%%%%%%%%%%%%%%%%%%%%%%%%%
\vskip .65cm
\noindent
Anurag Dey\\
{\it Indian Statistical Institute}
\vskip 2pt
\noindent
E-mail: deyanuragsaltlake64@gmail.com
\vskip 2pt

\noindent
Probal Chaudhuri\\
{\it Indian Statistical Institute}
\vskip 2pt
\noindent
E-mail:probalchaudhuri@gmail.com
% \vskip .3cm
%\centerline{(Received ???? 20??; accepted ???? 20??)}\par
%\end{document}
%%%%%%%%%%%%%%%%%%%%%%%%%%%%%%%%%%%%%%%%%%%%%%%%%%%%%%%%%%%%%%%%%%%%%%%%%%%%%%%%%%%%%%%%%%%%%%%%%%%%%%%%%%%%%%%%%%%%%%%%%%%%
%%%%%%%%%%%%%%%%%%%%%%%%%%%%%%%%%%%%%%%%%%%%%%%%%%%%%%%%%%%%%%%%%%%%%%%%%%%%%%%%%%%%%%%%%%%%%%%%%%%%%%%%%%%%%%%%%%%%%%%%%%%%

\begin{thebibliography}{9}
\bibitem[Berger(1998)]{MR1624693}
\textsc{Berger, Y.~G.} (1998). Rate of convergence to normal distribution for the {H}orvitz-{T}hompson estimator. \textit{J. Statist. Plann. Inference} \textbf{67}, 209--226.
%\newblock ISSN 0378-3758.
%\newblock \doi{10.1016/S0378-3758(97)00107-9}.
%\newblock URL \url{https://doi.org/10.1016/S0378-3758(97)00107-9}.

\bibitem[Billingsley(2013)]{billingsley2013convergence}
\textsc{Billingsley, P.} (2013). \textit{Convergence of
Probability Measures}, 2nd ed. Wiley, New York.

\bibitem[{Boistard et~al.(2017{\natexlab{a}})Boistard, Lopuha\"{a} \& Ruiz-Gazen}]{MR3670194}
\textsc{Boistard, H.}, \textsc{Lopuha\"{a}, H.~P.} and \textsc{Ruiz-Gazen, A.} (2017). Functional central limit theorems for single-stage sampling designs. \textit{Ann. Statist.} \textbf{45}, 1728--1758{\natexlab{a}}.

\bibitem[Boistard et~al.(2017{\natexlab{b}})Boistard, Lopuha\"{a}, and Ruiz-Gazen]{boistard2017}
\textsc{Boistard, H.}, \textsc{Lopuha\"{a}, H.~P.} and \textsc{Ruiz-Gazen, A.} (2017). Supplement to "Functional central limit theorems for single-stage samplings designs" {\natexlab{b}}.

\bibitem[Conti(2014)]{conti2014estimation}
\textsc{Conti, P.~L.} (2014). On the estimation of the distribution function of a finite population under high entropy sampling designs, with applications. \textit{Sankhya B} \textbf{76}, 234--259.

\bibitem[Dey and Chaudhuri(2023)]{deychaudhuri2023}
\textsc{Dey, A.} and \textsc{Chaudhuri, P.} (2024). Supplement to ``A comparison of estimators of mean and its functions in finite populations." To appear in \textit{ Statistica Sinica}. \href{http://www3.stat.sinica.edu.tw/preprint/supp/2022-0181_supp.pdf}{\textit{Preprint}}

\bibitem[H\'{a}jek(1964)]{MR0178555}
\textsc{H\'{a}jek, J.} (1964). Asymptotic theory of rejective sampling with varying probabilities from a finite population. \textit{Ann. Math. Statist.} \textbf{35}, 1491--1523.
%\newblock ISSN 0003-4851.
%\newblock \doi{10.1214/aoms/1177700375}.
%\newblock URL \url{https://doi.org/10.1214/aoms/1177700375}.

\bibitem[Ohlsson(1986)]{MR844032}
\textsc{Ohlsson, E.} (1986). Asymptotic normality of the {R}ao-{H}artley-{C}ochran estimator: an application of the martingale {CLT}. \textit{Scand. J. Stat.} \textbf{13}, 17--28.

\bibitem[Ohlsson(1989)]{MR983089}
E.~Ohlsson. Asymptotic normality for two-stage sampling from a finite population. \emph{Probab. Theory Related Fields.}, 81\penalty0 (3):\penalty0 341--352, 1989.

\bibitem[Rao et~al.(1962)Rao, Hartley, and Cochran]{rao1962simple}
\textsc{Rao, J.}, \textsc{Hartley, H.} \& \textsc{Cochran, W.} (1962).
On a simple procedure of unequal probability sampling without replacement. \textit{J. R. Stat. Soc. Ser. B Methodol.} \textbf{24}, 482--491.

\bibitem[Rubin-Bleuer and Kratina(2005)]{rubin2005two}
\textsc{Rubin-Bleuer, S.} and \textsc{Kratina, I.~S.} (2005). On the two-phase framework for joint model and design-based inference. \textit{Ann. Statist.} \textbf{33}, 2789--2810.

\bibitem[Scott and Wu(1981)]{scott1981asymptotic}
\textsc{Scott, A.} and \textsc{Wu, C.~F.} . (1981). On the asymptotic distribution of ratio and regression estimators. \textit{J. Amer. Statist. Assoc}. \textbf{76}, 98--102.

\bibitem[Serfling(2009)]{serfling2009approximation}
\textsc{Serfling, R.~J.} (2009). \textit{Approximation theorems of mathematical statistics}. John Wiley \& Sons, 2009.

\bibitem[Shao(1994)]{MR1292550}
\textsc{Shao, J.} (1994). {$L$}-statistics in complex survey problems. \textit{Ann. Statist.} \textbf{22}, 946--967.

\bibitem[Shorack and Wellner(2009)]{shorack2009empirical}
\textsc{Shorack, G.~R.} and \textsc{Wellner, J.~A.} (2009). \textit{Empirical processes with applications to statistics}. SIAM.
\end{thebibliography}
\end{document}